\documentclass[11pt]{amsart}
\usepackage{subcaption}
                                      
\usepackage{newlfont}
\usepackage{amssymb}
\usepackage{amsmath}
\usepackage{amsthm}
\usepackage{latexsym}
\usepackage{amsthm}
\usepackage{stmaryrd}
\usepackage{floatflt}
\usepackage{verbatim}
\usepackage{float}
\usepackage{indentfirst}
\usepackage{pdflscape}

\usepackage{tikz}
\usetikzlibrary{tikzmark}

\usepackage{epstopdf}
\usepackage{xcolor}
\usepackage[all,cmtip]{xy}
\usepackage{rotating}

\newcommand\scalemath[2]{\scalebox{#1}{\mbox{\ensuremath{\displaystyle #2}}}}
\newtheorem{thm}{Theorem}[section]  
\newtheorem{prop}[thm]{Proposition} 
\newtheorem{cor}[thm]{Corollary}
\newtheorem{lem}[thm]{Lemma}
\theoremstyle{definition}               
\hyphenation{}
\newtheorem{defn}[thm]{Definition}
\newtheorem{rem}[thm]{Remark}
\newtheorem{ese}[thm]{Example} 
\theoremstyle{remark}    
\oddsidemargin=30pt \evensidemargin=20pt
\hyphenation{sil-la-ba-zio-ne pa-ren-te-si}

\begin{document}
\title{Presentations of braid groups of type $A$ arising from $(m+2)$-angulations of regular polygons}
\author{Davide Morigi}
\maketitle
\pagestyle{myheadings}
\markboth{Davide Morigi}{Presentations of braid groups of type $A$ from $(m+2)$-angulations of regular polygons}

\begin{abstract}
We describe presentations of braid groups of type $A$ arising from coloured quivers of mutation type $A$. We show that these can be interpreted geometrically as generalised triangulations of regular polygons.
\end{abstract}

\texttt{Keywords:} mutation, braid groups, cluster categories.

\texttt{AMS 2010 classification number:} 13F60, 20F36

\section{Introduction}
The concept of $(m+2)$-angulation of a regular $(nm+2)$-sided polygon was studied in \cite{BM} by Baur and Marsh in 2006 to give a geometric description of the $m$-cluster category of type $A_{n-1}$. A few years later, Buan and Thomas introduced $m$-coloured quivers and $m$-coloured quiver mutations in \cite{BT1}, adapting the classical concept of quiver mutation given by Fomin and Zelevinsky in \cite{FZ} to the setting of higher cluster categories.  

Classically, the concepts of quiver of mutation type $A_{n-1}$ and triangulation of a regular $(n+2)$ polygon coincide (see \cite{FZ2}). In Section 2 of \cite{GM}, the authors associate a group to any quiver of mutation type $A_{n-1}$, that they show being isomorphic to the braid group of type $A_{n-1}$. They give an interpretation of such presentations, that already appeared in \cite{S}, in terms of triangulations of a regular $(n+2)$-gon. This construction allows them to interpret the aforementioned presentations from a cluster algebra point of view. 

This paper generalises the result of \cite{GM} just described, using the concepts introduced in \cite{BM} and \cite{BT1}. We propose a description of some presentations given in \cite{S} from an higher cluster category perspective. In particular, we describe presentations of the braid group of type $A_{n-1}$ arising from colured quiver mutation. The main theorem we will prove is the following.

\begin{thm}\label{thm_introduction}
Fix two integers $m,n\geq 1$. Let $Q$ be an $m$-coloured quiver of mutation type $\overrightarrow{A_{n-1}}$, with vertices $1,\ldots, n-1$. If $i\to j\in Q_1$, let $c_{ij}$ be its colour. Define the group $B_Q$ to be generated by $s_1,\ldots, s_{n-1}$ subject to the following relations.
\begin{enumerate}
\item $s_i s_j=s_js_i$ if there is no arrow between $i$ and $j$ (in either direction);
\item $s_i s_j s_i=s_js_is_j$ if there is a pair of arrows $\xymatrix@1{i\ar@<0.3ex>[r] & j\ar@<0.3ex>[l]}$;
\item $s_{i_1}s_{i_2}s_{i_3}s_{i_1}=s_{i_2}s_{i_3}s_{i_1}s_{i_2}=s_{i_3}s_{i_1}s_{i_2}s_{i_3}$ if 
$$
\xymatrixcolsep{1pc}\xymatrix{ & i_1\ar@<0.3ex>[dl]\ar@<0.3ex>[dr] & \\
i_2 \ar@<0.3ex>[rr]\ar@<0.3ex>[ur] & & i_3 \ar@<0.3ex>[ll]\ar@<0.3ex>[ul]}
$$
is a subquiver of $Q$, and $c_{i_1, i_2}+c_{i_2, i_3}+c_{i_3, i_1}=2m+1$.
\end{enumerate}
Then $B_Q$ is isomorphic to the braid group of type $\overrightarrow{A_{n-1}}$.
\end{thm}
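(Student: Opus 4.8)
The strategy I would follow is to prove that the presentation $B_Q$ is invariant under $m$-coloured quiver mutation, and then reduce to a single base case. By \cite{BT1}, together with the geometric classification in \cite{BM}, every $m$-coloured quiver of mutation type $\overrightarrow{A_{n-1}}$ is obtained from the linearly oriented quiver by a finite sequence of mutations $\mu_k$, so the mutation class is connected. It therefore suffices to (i) verify the statement for one convenient representative $Q_0$, and (ii) construct, for each vertex $k$, an isomorphism $B_Q \cong B_{\mu_k Q}$. Combining (i) and (ii) along any mutation path from $Q_0$ to $Q$ then yields $B_Q \cong \mathrm{Br}(A_{n-1})$.

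For the base case I would take $Q_0$ to be the linearly oriented quiver $1 \to 2 \to \cdots \to (n-1)$ with its canonical colouring, in which consecutive vertices are joined by a pair of opposite arrows and non-consecutive vertices by none. Relation (1) then produces exactly the commuting relations $s_i s_j = s_j s_i$ for $|i-j|\geq 2$, relation (2) produces the braid relations $s_i s_{i+1} s_i = s_{i+1} s_i s_{i+1}$, and relation (3) is vacuous since a path contains no oriented $3$-cycle. Hence $B_{Q_0}$ is precisely the standard presentation of $\mathrm{Br}(A_{n-1})$.

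The core of the argument is step (ii). Following the template of \cite{GM} in the uncoloured case, I would define a homomorphism $\phi_k\colon B_Q \to B_{\mu_k Q}$ by fixing $s_k$, leaving $s_i$ unchanged when $i$ is not joined to $k$, and sending $s_i$ to a suitable conjugate $s_k^{\pm 1} s_i s_k^{\mp 1}$ when $i$ is joined to $k$, with the sign dictated by the orientation of the arrow at $k$. One then checks that $\phi_k$ carries each defining relation of $B_Q$ to a consequence of the relations of $B_{\mu_k Q}$, and that the map associated to the reverse mutation provides a two-sided inverse. The decisive point is to track, through the Buan--Thomas mutation rule, how the local configuration at $k$---no arrows, a $2$-cycle, or a coloured $3$-cycle---transforms, and to match each transition with the corresponding relation (1), (2) or (3). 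In particular, the colour-sum condition $c_{i_1 i_2}+c_{i_2 i_3}+c_{i_3 i_1}=2m+1$ should turn out to be exactly the invariant preserved when a $3$-cycle is created or destroyed by mutation, which is what makes relation (3) transform correctly.

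The main obstacle will be the case analysis underlying (ii): one must enumerate every local shape the mutated vertex $k$ can have inside a type-$\overrightarrow{A_{n-1}}$ coloured quiver, verify the colour arithmetic under mutation in each case, and confirm the resulting braid-group identity. The restriction to mutation type $\overrightarrow{A_{n-1}}$ is essential here, since it sharply limits the admissible subquivers---only the configurations appearing in relations (1)--(3) occur---and guarantees that no further relations are needed. The geometric picture of \cite{BM}, in which $\mu_k$ is realised as a flip of the $(m+2)$-angulation and the presentation matches the graph presentations of \cite{S}, can be used to organise this enumeration and to keep the bookkeeping of colours under control.
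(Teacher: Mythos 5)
Your proposal follows essentially the same route as the paper: verify the statement for the linearly oriented quiver (where $B_{Q_0}$ is visibly the standard braid presentation), and establish $B_Q\cong B_{\mu_k(Q)}$ via explicit conjugation homomorphisms, with the $(m+2)$-angulation model organising the local case analysis. Two details in your sketch, however, deviate from what actually has to happen in the coloured setting, and both are worth naming. First, your map $\phi_k$ conjugates $s_i$ "when $i$ is joined to $k$, with the sign dictated by the orientation of the arrow at $k$"; this is the correct prescription only for $m=1$. For $m\geq 2$ an arrow at $k$ carries a colour in $\{0,\ldots,m\}$, not just an orientation, and the paper's homomorphism $\varphi_k$ conjugates $s_i$ by $t_k$ \emph{only} when the arrow $k\to i$ has colour $0$, leaving all neighbours with arrows of intermediate colour fixed (the inverse map conjugates only the colour-$m$ neighbours). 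Conjugating every neighbour, with any choice of signs, gives the wrong map: the vertices that "move" under mutation (Lemma \ref{lem_local_mutations}) are exactly the colour-$0$ ones. Second, for bijectivity you propose verifying that the reverse-mutation map is a two-sided inverse, which would require a second full round of relation-checking; the paper instead exploits that $\mu_k$ has order $m+1$ on quivers of mutation type $\overrightarrow{A_{n-1}}$ (a fact that itself comes from the polygon model, Proposition \ref{rem_order_rotation} and Theorem \ref{thm_bij_col_quivs_m_guls}) to show that the composite $\varphi_k^{m+1}\colon B_Q\to B_{\mu_k^{m+1}(Q)}=B_Q$ is conjugation by $t_k$, hence an isomorphism, from which injectivity and surjectivity of $\varphi_k$ follow with no extra case analysis. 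Your variant can be made to work, but the colour-$0$ condition on the conjugation is not an optional refinement --- it is the point where the coloured theorem genuinely differs from the uncoloured template of \cite{GM}.
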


We prove this result by giving explicit group isomorphisms between $B_Q$ and $B_{\mu_k(Q)}$, the group associated to the $m$-coloured quiver mutation of $Q$ at some vertex $k$. The proof relies on the geometric description of $m$-coloured quivers of mutation type $\overrightarrow{A_{n-1}}$ as $(m+2)$-angulations of a regular $(nm+2)$-sided polygon which was proved in \cite{T}. However, an explicit proof that the quiver associated to an $(m+2)$-angulation of a regular polygon is always of mutation type $\overrightarrow{A_{n-1}}$ is not given in \cite{T}. Therefore, we will go through the proof of this fact, and then tackle Theorem \ref{thm_introduction}.

The relations in Theorem \ref{thm_introduction} are the same given by Grant and Marsh in \cite{GM} for uncoloured quivers of mutation type $A$. However, in the uncoloured setting, the only possible full subquivers of a quiver $Q$ of mutation type $A$ are linear quivers and 3-cycles. In the coloured setting, if we fix an integer $m\geq 2$ and choose $n\in\mathbb{N}$ big enough, we can have as full subquivers of an $m$-coloured quiver $Q$ of mutation type $A_{n-1}$ as many $k$-cycles as we want, for $k=2,\ldots, m+2$. We will show that relations involving $k$-cycles also hold for all $k$, and follow from the ones in Theorem \ref{thm_introduction}.

The pictures in this paper have been done using the interactive mathematics software Geogebra \cite{GEO}.

\section{Coloured quivers}

Throughout the paper, $m,n\geq 1$ will be fixed integers. The following definitions can be found in \cite{BT1}. 

\begin{defn}\label{col_quiver}
An \textbf{$m$-coloured quiver} $Q$ consists of vertices $1,\ldots ,n$ and coloured arrows $i \xrightarrow{(c)} j$, where $c\in \{0,\ldots, m\}$. \\
Let $q_{ij}^{(c)}$ denote the number of arrows from $i$ to $j$ of colour $c$. We consider $m$-coloured quivers satisfying the following conditions.
\begin{itemize}
\item[(I)] \textit{No loops:} $q_{ii}^{(c)}=0$ for all $c=0,\ldots , m$.
\item[(II)] \textit{Monochromaticity:} $q_{ij}^{(c)}\neq 0$ implies $q_{ij}^{(c')}=0$ for all $c'\neq c$.
\item[(III)] \textit{Skew-symmetry:} $q_{ij}^{(c)}=q_{ji}^{(m-c)}$ for all $c=0,\ldots, m$.
\end{itemize}
\end{defn}

\begin{rem}\label{rem_std_quiver_interpretation}
A standard quiver (i.e., a quiver without colours on the arrows) shall be interpreted as a 1-coloured quiver whose arrows have colour 1. For example, a quiver of type $A_3$ with the usual orientation of the arrows will be interpreted as the following 1-coloured quiver.
$$
\xymatrix@1{1\ar[r] & 2\ar[r] & 3} \hspace{0.6cm} \leadsto \hspace{0.6cm} \xymatrix@1{1\ar@<0.3ex>[r]^{(1)} & 2\ar@<0.3ex>[l]^{(0)}\ar@<0.3ex>[r]^{(1)} & 3\ar@<0.3ex>[l]^{(0)}}
$$
\end{rem}

\begin{rem}
Notice that, because of the skew symmetry property, if an $m$-coloured quiver $Q$ has an arrow $i\stackrel{(c)}{\to} j$, $c\in\{0,\ldots, m\}$, then it also has an arrow $j\stackrel{(m-c)}{\to}i$. \\ 
Thus, we may sometimes draw
$$
i\stackrel{(c)}{\to} j
$$
instead of 
$$
\xymatrix@1{i\ar@<0.4ex>[r]^{(c)} & j\ar@<0.4ex>[l]^{(m-c)}}.
$$
\end{rem}

\begin{ese}
Let $m=3$, $n=5$. Then an example of 3-coloured  quiver on vertices $1,\ldots, 5$ is the following.
$$
\xymatrixcolsep{3pc}\xymatrix{1\ar@<0.4ex>[r]\ar@<-0.5pt>@{}[r]^{\scalemath{0.5}{(1)}} & 3 \ar@<0.4ex>[l]\ar@<-0.5pt>@{}[l]^{\scalemath{0.5}{(2)}} \ar@<0.4ex>[r]\ar@<-0.5pt>@{}[r]^{\scalemath{0.5}{(0)}} & 2\ar@<0.4ex>[l]\ar@<-0.5pt>@{}[l]^{\scalemath{0.5}{(3)}}\ar@<0.4ex>[rr]\ar@<-0.5pt>@{}[rr]^{\scalemath{0.5}{(1)}}\ar@<0.4ex>[dr]\ar@<-0.5pt>@{}[dr]^<<<<<<<<<{\scalemath{0.5}{(2)}} & & 4\ar@<0.4ex>[ll]\ar@<-0.5pt>@{}[ll]^{\scalemath{0.5}{(2)}}\ar@<0.4ex>[dl]\ar@<-0.5pt>@{}[dl]^{\scalemath{0.5}{(0)}} \\
& & & 5\ar@<0.4ex>[ul]\ar@<-0.5pt>@{}[ul]^{\scalemath{0.5}{(1)}}\ar@<0.4ex>[ur]\ar@<-0.5pt>@{}[ur]^<<<<<<<{\scalemath{0.5}{(3)}} }
$$
We might draw this as
$$
\xymatrix{1\ar@<0.4ex>[r]\ar@<-0.5pt>@{}[r]^{\scalemath{0.5}{(1)}} & 3  \ar@<0.4ex>[r]\ar@<-0.5pt>@{}[r]^{\scalemath{0.5}{(0)}} & 2\ar@<0.4ex>[rr]\ar@<-0.5pt>@{}[rr]^{\scalemath{0.5}{(1)}} & & 4\ar@<0.4ex>[dl]\ar@<-0.5pt>@{}[dl]^{\scalemath{0.5}{(0)}} \\
& & & 5\ar@<0.4ex>[ul]\ar@<-0.5pt>@{}[ul]^{\scalemath{0.5}{(1)}} }.
$$
\end{ese}

We now introduce the concept of mutation of an $m$-coloured quiver at a vertex $k$.  

\begin{defn}\label{defn_quiv_mut}
Let $Q$ be an $m$-coloured quiver, and $k\in \{1,\ldots, n\}$ one of its vertices. The $m$-\textbf{coloured quiver mutation} of $Q$ at vertex $k$ is the $m$-coloured quiver $\widetilde{Q}=\mu_k (Q)$ defined by
$$
\tilde{q}_{ij}^{(c)}=\begin{cases}
q_{ij}^{(c+1)} & \text{ if } k=i \\
q_{ij}^{(c-1)} & \text{ if } k=j \\
\max\scalemath{0.85}{\{0,q_{ij}^{(c)}-\sum_{t\neq c}q_{ij}^{(t)}+(q_{ik}^{(c)}-q_{ik}^{(c-1)})q_{kj}^{(0)}+q_{ik}^{(m)}(q_{kj}^{(c)}-q_{kj}^{(c+1)})\}} & \text{ if } i\neq k\neq j
\end{cases}
$$
where we set $q_{ij}^{(m+1)}=q_{ij}^{(0)}$ and $q_{ij}^{(-1)}=q_{ij}^{(m)}$.
\end{defn}

\begin{rem}
We will later associate to $m$-coloured quivers of some type a generalised triangulation of a regular polygon. In this interpretation, the quiver mutation defined above corresponds to the counterclockwise rotation of a diagonal in the polygon. 

If we replace all the occurrences of $c+1$ by $c-1$ (and viceversa) in the definition of $\tilde{q}_{ij}^{(c)}$ given above, then we would get a definition of $m$-coloured quiver mutation that is inverse to Definition \ref{defn_quiv_mut}. With this alternative definition, mutations should be interpreted as clockwise rotations of a diagonal in the associated generalised triangulation of a regular polygon.
\end{rem}

\begin{rem}
Let $Q$ be a standard (uncoloured) quiver. If we interpret $Q$ as a 1-coloured quiver as in Remark \ref{rem_std_quiver_interpretation}, then it is easy to check that the definition of 1-coloured quiver mutation given above agrees with the standard definition of quiver mutation given in literature (see \cite{FZ}, \S 4).
\end{rem}

The above definition is slightly different to the one given in \cite{BT1}. Indeed, in \cite{BT1} the authors swap the conditions $k=i$ and $k=j$. Our choice will be justified later (see Remark \ref{rem_correct_defn} and Section \ref{conn_gulations_quivers}).

One can give an equivalent definition of coloured quiver mutation, that is in general easier to work with. 

\begin{prop}\label{prop_equiv_col_quiv_mut}
Let $Q$ be an $m$-coloured quiver, and $k\in\{1,\ldots,n\}$ one of its vertices. Then the following algorithm correctly computes the $m$-coloured quiver mutation $\mu_k(Q)$ of $Q$ at vertex $k$. The colours -1 and $m+1$ that could arise shall be interpreted as $m$ and 0, respectively.
\begin{itemize}
\item[Step 1.] Add 1 to the colour of the arrows going into $k$, and subtract 1 to the colour of the arrows going out of $k$.
\item[Step 2.] For each of the following type of arrows
$$
\xymatrix@1{i\ar@<1ex>[r]^{(c)} & k\ar[l]^{(m-c)}\ar@<1ex>[r]^{(0)} & j \ar[l]^{(m)}},
$$
with $i\neq j$ and $c\neq m$, add the pair of arrows
$$
\xymatrix@1{i\ar@<1ex>[r]^{(c)} & j\ar[l]^{(m-c)}}.
$$
\item[Step 3.] If the graph obtained violates the monochromaticity property (II) of Definition \ref{col_quiver} because for some pair of vertices $i$ and $j$ there are arrows from $i$ to $j$ which have more than one different colour, cancel the same number of arrows of each colour, until property (II) is satisfied.
\end{itemize}
\end{prop}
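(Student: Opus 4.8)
The plan is to fix a vertex $k$ and an ordered pair $i,j$ and verify directly that, for every colour $c$, the number $\tilde q_{ij}^{(c)}$ of colour-$c$ arrows from $i$ to $j$ produced by the three steps of the algorithm agrees with the value prescribed by Definition~\ref{defn_quiv_mut}. Since both the formula and the algorithm involve only the arrows among the three vertices $i,j,k$, this is a purely local computation, and I would organise it along the same case distinction ($k=i$, $k=j$, $i\neq k\neq j$) that appears in the definition.

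First I would dispose of the two boundary cases. If $k=i$ or $k=j$, then Step~2 never creates an arrow incident with $k$, because it only inserts arrows between the two endpoints $i',j'$ of a path $i'\to k\to j'$ with $i',j'\neq k$. Moreover Step~1 merely shifts the colours of all arrows at $k$ by $\pm 1$, which preserves monochromaticity, so Step~3 is vacuous there. Hence an arrow out of $k$ has its colour lowered by one and an arrow into $k$ has its colour raised by one, giving $\tilde q_{kj}^{(c)}=q_{kj}^{(c+1)}$ and $\tilde q_{ik}^{(c)}=q_{ik}^{(c-1)}$, exactly the first two branches of the formula (with the wrap-around $q^{(m+1)}=q^{(0)}$ and $q^{(-1)}=q^{(m)}$).

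The substance is the case $i\neq k\neq j$, where Step~1 leaves the arrows from $i$ to $j$ untouched and all the action is in Steps~2 and~3. Here I would count the colour-$c$ arrows from $i$ to $j$ created before cancellation. Besides the $q_{ij}^{(c)}$ original ones, Step~2 contributes from length-two paths through $k$; using the skew-symmetry~(III) of Definition~\ref{col_quiver} to read each such path in both orientations, a path $i\xrightarrow{(c)}k\xrightarrow{(0)}j$ contributes $q_{ik}^{(c)}q_{kj}^{(0)}$ arrows of colour $c$, while the skew-symmetric family $i\xrightarrow{(m)}k\xrightarrow{(c)}j$, that is $j\xrightarrow{(m-c)}k\xrightarrow{(0)}i$, contributes a further $q_{ik}^{(m)}q_{kj}^{(c)}$. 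The negative terms $-q_{ik}^{(c-1)}q_{kj}^{(0)}$ and $-q_{ik}^{(m)}q_{kj}^{(c+1)}$ in the formula I would then identify with the Step~3 cancellations: a colour-$(c-1)$ arrow and a colour-$(c+1)$ arrow each annihilate a colour-$c$ arrow, and the outer $\max\{0,-\}$ records that cancellation cannot produce a negative number of arrows. Matching these two bookkeepings, together with the term $q_{ij}^{(c)}-\sum_{t\neq c}q_{ij}^{(t)}$ coming from the original, monochromatic and hence single-coloured arrows between $i$ and $j$, yields the third branch of the formula.

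The main obstacle I anticipate is precisely the Step~3 accounting: I must check that after inserting all the arrows of Step~2 the prescribed cancellation leaves exactly one colour surviving, and in the multiplicity predicted by the $\max$ expression, taking care of the boundary colours via the conventions $q^{(m+1)}=q^{(0)}$ and $q^{(-1)}=q^{(m)}$ and of the edge cases $c=0$ and $c=m$, where one of the two path families drops out because of the restriction $c\neq m$ in Step~2. As a consistency check I would specialise to $m=1$, where the colours lie in $\{0,1\}$, monochromaticity forces a single arrow between any two vertices, and the whole scheme must reduce to ordinary quiver mutation with two-cycle cancellation, recovering the classical rule of \cite{FZ}.
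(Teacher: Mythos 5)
Your case analysis is set up correctly (the boundary cases $k=i$, $k=j$ and the accounting of the positive terms $q_{ik}^{(c)}q_{kj}^{(0)}$ and $q_{ik}^{(m)}q_{kj}^{(c)}$ are fine), but the step you defer to the end --- identifying Step 3 with the negative terms of Definition \ref{defn_quiv_mut} --- is not a bookkeeping detail that can be pushed through: it is false at the level of generality you are working in, so the ``purely local computation'' cannot close. Step 3 cancels equal numbers of arrows of \emph{any} two distinct colours between $i$ and $j$, whereas the formula removes colour-$c$ arrows only through the specific terms $-\sum_{t\neq c}q_{ij}^{(t)}$, $-q_{ik}^{(c-1)}q_{kj}^{(0)}$ and $-q_{ik}^{(m)}q_{kj}^{(c+1)}$; in particular, pre-existing arrows $i\to j$ are never diminished by the formula unless the arrows created in Step 2 happen to have the adjacent colour. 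Concretely, take $m=2$ and $Q$ with vertices $1,2,3$ and arrows $1\xrightarrow{(0)}2$, $2\xrightarrow{(0)}3$, $1\xrightarrow{(2)}3$ (plus skew-symmetric partners); this satisfies (I)--(III) of Definition \ref{col_quiver}. Mutating at $k=2$, the formula gives $\tilde q_{13}^{(2)}=\max\{0,\,1-0+(q_{12}^{(2)}-q_{12}^{(1)})q_{23}^{(0)}+q_{12}^{(2)}(q_{23}^{(2)}-q_{23}^{(0)})\}=1$ and $\tilde q_{13}^{(0)}=\tilde q_{13}^{(1)}=0$, so the colour-$2$ arrow survives untouched. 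The algorithm instead applies Step 2 to the path $1\xrightarrow{(0)}2\xrightarrow{(0)}3$ (here $c=0\neq m$), creating $1\xrightarrow{(0)}3$ and $3\xrightarrow{(2)}1$, and then Step 3 cancels the colour-$0$ arrow against the colour-$2$ arrow, leaving \emph{no} arrow between $1$ and $3$. The two procedures genuinely disagree, so no verification of the kind you outline can succeed.

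What is missing is a structural hypothesis on $Q$ that rules out such configurations, and any correct proof must use it. For quivers of mutation type $\overrightarrow{A_{n-1}}$ --- the only quivers to which the paper ever applies this proposition --- three pairwise-connected vertices correspond to three $m$-diagonals bounding a common $(m+2)$-gon, and Proposition \ref{prop_colour_k_cycles} forces their cyclic colour sums to be $2m+1$ and $m-1$; my example has sums $0$ and $6$, so it cannot occur there, and one can check that under this constraint your two bookkeepings do coincide. This is also why the paper offers no proof at all: the statement is imported from Section 10 of \cite{BT1}, where it is established for coloured quivers of $m$-cluster-tilting objects (a class with exactly this kind of extra structure), with the correction $c\neq m$ discussed in Remark \ref{rem_correct_defn}. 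Two further points you should make explicit in any write-up: Step 2 must read the colours of the \emph{original} quiver, not of the output of Step 1 (with the other reading the algorithm already fails on $1\xrightarrow{(0)}2\xrightarrow{(0)}3$ for $m\geq 2$, since after Step 1 no arrow leaving $k$ has colour $0$); and monochromaticity of $Q$ guarantees that at most two distinct colours occur between $i$ and $j$ after Step 2, which is what makes the cancellation of Step 3 unambiguous.
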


\begin{rem}\label{rem_correct_defn}
The above Proposition can be found in Section 10 of \cite{BT1}. However, the authors don't impose the condition $c\neq m$ in Step 2. This is likely a mistake for the following two reasons:
\begin{itemize}
\item Let
$$
Q=\xymatrix@1{i\ar@<1ex>[r]^{(m)} & k\ar[l]^{(0)}\ar@<1ex>[r]^{(0)} & j \ar[l]^{(m)}}, \hspace{1cm} i\neq j.
$$
If we muatate at vertex $k$, Definition \ref{defn_quiv_mut} yields $\tilde{q}_{ij}^{(m)}=0$ and hence there should be no pair of arrows $\xymatrix{i\ar@<0.5ex>[r] & j\ar@<0.5ex>[l]} $ in $\mu_k(Q)$ while, according to Section 10 of \cite{BT1}, there should be a pair of arrows $\xymatrix@1{i\ar@<0.5ex>[r]^{(m)} & j \ar@<0.5ex>[l]^{(0)}}$.
\item The second reason will be fully explained in Section \ref{conn_gulations_quivers}. There, we will associate to some $m$-coloured quivers a combinatorial object, called $(m+2)$-angulation of an $(mn+2)$-gon. On such $(m+2)$-angulations we will define an operation, that we want to be compatible with the quiver mutation defined above. One can check that, in order for this to happen, the condition $c\neq m$ is crucial.
\end{itemize} 
\end{rem}

\begin{ese}
Let $m=2$ and let $Q$ be the following quiver.
$$
\xymatrix{ & 2\ar[dr]\ar@<-3pt>@{}[dr]^{\scalemath{0.5}{(0)}} & \\
1\ar[ur] \ar@<-3pt>@{}[ur]^{\scalemath{0.5}{(1)}} \ar[rr]\ar@<-3pt>@{}[rr]^{\scalemath{0.5}{(2)}} & & 3}
$$

Applying Steps 1. and 2. of Proposition \ref{prop_equiv_col_quiv_mut}  for $k=2$ yields the quiver
$$
\xymatrix{& 2\ar[dr]\ar@<-3pt>@{}[dr]^{\scalebox{0.5}{(2)}} & \\
1\ar[ur]\ar@<-3pt>@{}[ur]^{\scalebox{0.5}{(2)}} \ar@/^/[rr]^{\scalebox{0.5}{(2)}} \ar@/_/[rr]_{\scalebox{0.5}{(1)}} & & 3}
$$
However, the pair of vertices 1 and 3 violate the monochromaticity property of coloured quivers. Thus, if we apply Step 3. of Proposition \ref{prop_equiv_col_quiv_mut}, we get that the mutation of $Q$ at vertex 2 is 
$$
\mu_2 (Q)=\xymatrix@1{1\ar[r]^{(2)} & 2\ar[r]^{(2)} & 3}.
$$
\end{ese}

\begin{defn}\label{defn_quivAn1}
We call the following $m$-coloured quiver $\overrightarrow{A_{n-1}}$.
$$
\xymatrix@1{1\ar@<0.3ex>[r]\ar@<-1pt>@{}[r]^{\scalemath{0.5}{(0)}} & 2\ar@<0.3ex>[l]\ar@<-1pt>@{}[l]^{\scalemath{0.5}{(m)}}\ar@<0.3ex>[r]\ar@<-1pt>@{}[r]^<<<<<{\scalemath{0.5}{(0)}} & \cdots\ar@<0.3ex>[l]\ar@<-1pt>@{}[l]^>>>>>{\scalemath{0.5}{(m)}} \ar@<0.3ex>[r]\ar@<-1pt>@{}[r]^<<<<<{\scalemath{0.5}{(0)}}& (n-2)\ar@<0.3ex>[l]\ar@<-1pt>@{}[l]^>>>>>{\scalemath{0.5}{(m)}}\ar@<0.3ex>[r]\ar@<-1pt>@{}[r]^{\scalemath{0.5}{(0)}} & (n-1)\ar@<0.3ex>[l]\ar@<-1pt>@{}[l]^{\scalemath{0.5}{(m)}}}
$$
\end{defn}

\begin{defn}
An $m$-coloured quiver $Q$ is called \textbf{of mutation type $\overrightarrow{A_{n-1}}$} if $Q=\mu_{i_1}\ldots\mu_{i_{\ell}}(\overrightarrow{A_{n-1}})$ for some $i_1,\ldots, i_{\ell}\in\{1,\ldots, n-1\}$.
\end{defn}

\section{$(m+2)$-angulations of a regular polygon}
Recall $n,m\geq 1$ are fixed integers.

In the following, $\Pi$ will denote a regular $nm+2$ sided polygon, with vertices numbered clockwise from 1 to $nm+2$. 

We introduce the concept of $(m+2)$-angulation of $\Pi$. In the following we will see that this is closely related to $m$-coloured quivers of mutation type $\overrightarrow{A_{n-1}}$.

\begin{defn}\label{defn_m_gulation}
\begin{itemize}
\item A \textbf{diagonal} of $\Pi$ is a pair $(i,j)$ with $i\neq j$, $i,j\in\{1,\ldots, nm+2\}$. The diagonal $(i,j)$ will be interpreted the same as the diagonal $(j,i)$.
\item Two diagonals $(i_1,j_1)$, $(i_2, j_2)$ with $i_1<j_1$ and $i_2<j_2$ of $\Pi$ are called \textbf{intersecting} if $i_1<i_2<j_1<j_2$ or $i_2<i_1<j_2<j_1$.
\item An \textbf{$m$-diagonal} of $\Pi$ is a diagonal of $\Pi$ of the form $(i,i+jm+1)$ for some $i\in \{1,\ldots, nm+2\}$ and $j\in\{1,\ldots , n-1\}$.
\item An \textbf{$(m+2)$-angulation} of $\Pi$ is a maximal collection of non intersecting $m$-diagonals.
\end{itemize}
\end{defn}

\begin{rem}
One can give a geometrical interpretation the first three items of Definition \ref{defn_m_gulation}. 
\begin{itemize}
\item A \textbf{diagonal} of $\Pi$ is a segment connecting two distinct vertices of $\Pi$.
\item Two diagonals of $\Pi$ are called \textbf{intersecting} if they have a common point inside $\Pi$.
\item An \textbf{$m$-diagonal} of $\Pi$ is a diagonal of $\Pi$ which divides $\Pi$ into an $(mj+2)$-gon and an $(m(n-j)+2)$-gon for some $j=1,\ldots, n-1$. 
\end{itemize}

We will mainly use the geometric description of the concepts defined above, since it allows us to draw pictures. However, one might use a purely combinatorial approach as well.
\end{rem}

The following results follow directly from the definition of $(m+2)$-angulation.

\begin{prop}\label{prop_properties_m2_guls}
Let $\Delta$ be an $(m+2)$-angulation of $\Pi$. Then the following hold.
\begin{enumerate}
\item The number of $m$-diagonals of $\Delta$ is $n-1$.
\item $\Delta$ defines $n$ distinct $(m+2)$-gons $P_1,\ldots , P_n$ whose union is $\Pi$. 
\item If $n\geq 2$ then, for each $\gamma\in \Delta$ there are exactly two $(m+2)$-gons $P_{\gamma}^{(1)}$, $P_{\gamma}^{(2)}$ determined by $\Delta$ that have $\gamma$ as an edge. We call $P_{\gamma}$ the $(2m+2)$-gon given by the union of $P_{\gamma}^{(1)}$ and $P_{\gamma}^{(2)}$.
\end{enumerate}
\end{prop}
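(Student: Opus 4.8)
The plan is to prove all three parts by a counting and induction argument based on the fundamental observation that an $m$-diagonal $(i, i+jm+1)$ always cuts off a region whose number of boundary vertices is congruent to $2 \pmod m$, so that each piece is again a polygon on which the notion of $(m+2)$-angulation makes sense. The key quantitative fact is that a regular $N$-gon with $N \equiv 2 \pmod m$, say $N = pm + 2$, admits an $(m+2)$-angulation into exactly $p$ pieces using exactly $p-1$ $m$-diagonals; I would isolate this as the engine of the whole proposition and prove it by induction on $p$.

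\medskip

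\emph{Part (1) and (2).} First I would show that any $m$-diagonal of $\Pi$ splits it into an $(mj+2)$-gon and an $(m(n-j)+2)$-gon, which is immediate from counting boundary vertices on each side of the diagonal $(i, i+jm+1)$. Since both pieces have vertex count $\equiv 2 \pmod m$, an $(m+2)$-angulation of $\Pi$ restricts to $(m+2)$-angulations of each piece, so I can induct. For the base case $p=1$ (an $(m+2)$-gon), there are no $m$-diagonals, which gives $0$ diagonals and $1$ cell, matching $p-1$ and $p$. For the inductive step I would pick any $m$-diagonal $\gamma \in \Delta$, use the splitting to write $\Delta$ as the disjoint union $\{\gamma\} \sqcup \Delta_1 \sqcup \Delta_2$ where $\Delta_1, \Delta_2$ are the induced angulations of the two pieces on $j$ and $n-j$ cells respectively, and add the counts: $(j-1) + (n-j-1) + 1 = n-1$ diagonals and $j + (n-j) = n$ cells, each an $(m+2)$-gon. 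I must also check maximality transfers correctly, namely that a collection is maximal in $\Pi$ if and only if its restrictions are maximal in the two pieces; this follows because any additional non-crossing $m$-diagonal would have to lie entirely within one piece.

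\medskip

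\emph{Part (3).} For $n \geq 2$, fix $\gamma \in \Delta$. Each of the $n$ cells $P_1, \ldots, P_n$ is an $(m+2)$-gon bounded by a mixture of sides of $\Pi$ and $m$-diagonals of $\Delta$; I would argue that $\gamma$, being a chord interior to $\Pi$, is an edge of exactly two of these cells by the standard planar-subdivision fact that an interior edge of a polygonal subdivision borders precisely two faces, whereas a boundary side of $\Pi$ borders exactly one. Taking the union $P_\gamma = P_\gamma^{(1)} \cup P_\gamma^{(2)}$ of these two cells along their shared edge $\gamma$ yields a polygon whose boundary vertex count is $(m+2) + (m+2) - 2 = 2m+2$, since the two endpoints of $\gamma$ are shared, confirming that $P_\gamma$ is a $(2m+2)$-gon.

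\medskip

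\emph{Main obstacle.} The routine parts are the vertex counts; the step that requires genuine care is verifying that the restriction of a \emph{maximal} non-crossing family to each sub-polygon is again maximal, and conversely that gluing maximal families on the two pieces (together with the cutting diagonal) gives a maximal family on $\Pi$. The subtlety is that "$m$-diagonal of the sub-polygon" must be reconciled with "$m$-diagonal of $\Pi$": I would need to check that a diagonal of a piece of the form $(a, a + km + 1)$ relative to $\Pi$'s original vertex labelling is still an $m$-diagonal in the sense of Definition~\ref{defn_m_gulation}, which holds precisely because the cut preserves the $\equiv 2 \pmod m$ spacing along each arc. Once this compatibility is pinned down, the induction closes and all three statements follow simultaneously.
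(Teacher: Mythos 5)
Your proposal is correct and follows essentially the same route as the paper: induction on the number of cells after splitting $\Pi$ along an $m$-diagonal into an $(mj+2)$-gon and an $(m(n-j)+2)$-gon, with parts (1) and (2) obtained by adding the counts from the two pieces and part (3) from the fact that $\gamma$ bounds exactly one cell on each side. The only differences are cosmetic: you justify part (3) via the general planar-subdivision fact and an explicit vertex count, where the paper deduces it by applying part (2) to the two sub-polygons, and you spell out the maximality-transfer and $m$-diagonal-compatibility checks that the paper leaves implicit.
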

\begin{proof}
The first two statements can be proved by induction on $n$.
\begin{itemize}
\item If $n=1$, then $\Pi$ is an $(m+2)$-gon. Hence $\Delta$ is empty, that is, the number of $m$-diagonals of $\Delta$ is 0, and $\Pi$ is trivially union of one $(m+2)$-gon, that is $\Pi$ itself.
\item If $n>1$, let $\gamma\in \Delta$ be an $m$-diagonal. Then $\gamma$ divides $\Pi$ into an $(mj+2)$-gon $P$ and a $m(n-j)+2$-gon $P'$ for some $j=1,\ldots, n-1$.
\begin{itemize}
\item By induction hypothesis, we have that $\Delta$ induces an $(m+2)$-angulation on $P$ (resp. $P'$) consisting of $j-1$ $m$-diagonals (resp. $n-j-1$ $m$-diagonals), say $\delta_1,\ldots, \delta_{j-1}$ (resp. $\delta_{j+1}, \ldots, \delta_{n-1}$). Hence $\Delta=\{\delta_1,\ldots, \delta_{j-1}, \gamma, \delta_{j+1}, \ldots, \delta_{n-1}\}$, so the first statement follows.
\item By induction hypothesis, $P$ can be written as disjoint union of $j$ $(m+2)$-gons, say $P_1,\ldots, P_j$, while $P'$ can be written as union of $n-j$ $(m+2)$-gons, say $P_{j+1}, \ldots, P_n$. Hence $\Delta$ can be written as union of the $(m+2)$-gons $P_1,\ldots, P_n$, and so we get the second statement.
\end{itemize}  
\end{itemize}
As for (3), we know that $\gamma$ divides $\Pi$ into an $(mj+2)$-gon $\Pi_1$ and an $(m(n-j)+2)$-gon $\Pi_2$ for some $j\in \{1,\ldots, n-1\}$. Let $\Delta_i$ be the restriction of $\Delta$ to $\Pi_i$, for $i=1,2$. By (2), $\Delta_1$ (resp. $\Delta_2$) defines $j$ (resp. $n-j$) $(m+2)$-gons whose union is $\Pi_1$ (resp. $\Pi_2$). Therefore $P_{\gamma}^{(1)}$ and $P_{\gamma}^{(2)}$ are the $(m+2)$-gons that have $\gamma$ as an edge in $\Pi_1$ and $\Pi_2$, respectively.
\end{proof}

\begin{defn}\label{def_rotation}
Let $\Delta$ be an $(m+2)$-angulation of $\Pi$, and $\gamma\in\Delta$ be an $m$-diagonal. Let $P_{\gamma}$ be the $(2m+2)$-gon introduced in Proposition \ref{prop_properties_m2_guls}. Let $\{a_0,\ldots, a_{2m+1}\}$ be the vertices of $P_{\gamma}$, with $a_0<a_1<\ldots <a_{2m+1}$. Then the \textbf{mutation} of $\Delta$ at $\gamma$ is the following $(m+2)$-angulation of $\Pi$
$$
r_{\gamma}(\Delta)=(\Delta\setminus \gamma)\cup \{\sigma (\gamma)\},
$$
where, if $\gamma=(a_i,a_{i+m+1})$ for some $i=0,\ldots,m$, then 
$$
\sigma(\gamma)=(a_{i-1 \text{ (mod 2m+2)}},a_{i+m \text{ (mod 2m+2)}}).
$$ 
\end{defn}

\begin{rem}
The $(m+2)$-angulation $r_{\gamma}(\Delta)$ can be geometrically interpreted as the $(m+2)$-angulation of $\Pi$ obtained by fixing all the diagonals of $\Delta$ except from $\gamma$, that is rotated counterclockwise inside the $(2m+2)$-gon $P_{\gamma}$.
\end{rem}

\begin{prop}\label{rem_order_rotation}
Let $\Delta$ be an $(m+2)$-angulation of $\Pi$, $\gamma\in\Delta$. Then $r_{\gamma}$ has order $m+1$.
\end{prop}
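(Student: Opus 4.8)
The plan is to reduce the claim that $r_\gamma$ has order $m+1$ to a direct combinatorial computation inside the single $(2m+2)$-gon $P_\gamma$. The key observation is that mutation at $\gamma$ only changes the diagonal $\gamma$ and leaves every other diagonal of $\Delta$ fixed, so $r_\gamma^k(\Delta) = (\Delta\setminus\gamma)\cup\{\sigma^k(\gamma)\}$ for every $k$, provided each intermediate $\sigma^j(\gamma)$ is still a valid $m$-diagonal of $P_\gamma$ not intersecting the others. Thus it suffices to track the single diagonal $\gamma$ under iterated application of $\sigma$ and show that $\sigma$ has order $m+1$ as a permutation on the set of ``long'' diagonals of the $(2m+2)$-gon $P_\gamma$.

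First I would fix notation matching Definition \ref{def_rotation}: write the vertices of $P_\gamma$ as $a_0 < a_1 < \cdots < a_{2m+1}$, and observe that a diagonal of $P_\gamma$ of the form $(a_i, a_{i+m+1})$ is precisely one that cuts $P_\gamma$ into two $(m+2)$-gons, i.e. the ``main diagonals'' of the $(2m+2)$-gon. There are exactly $m+1$ such diagonals, indexed by $i = 0, 1, \ldots, m$ (since $(a_i,a_{i+m+1})$ and $(a_{i+m+1}, a_{i+2m+2}) = (a_{i+m+1}, a_i)$ coincide, the indices run modulo $m+1$ after identifying a diagonal with its reverse). From the formula $\sigma(a_i, a_{i+m+1}) = (a_{i-1}, a_{i+m})$, where indices are read modulo $2m+2$, I would check that $\sigma$ sends the main diagonal with index $i$ to the one with index $i-1$; that is, $\sigma$ acts as a cyclic shift by $-1$ on the $m+1$ main diagonals.

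Next I would compute $\sigma^{m+1}$ explicitly. Applying $\sigma$ a total of $m+1$ times sends $(a_i, a_{i+m+1})$ to $(a_{i-(m+1)}, a_{i})$, with indices modulo $2m+2$. Since $a_{i-(m+1)} = a_{i+m+1}$ modulo $2m+2$, this diagonal is $(a_{i+m+1}, a_i)$, which is the same diagonal as $(a_i, a_{i+m+1}) = \gamma$ under the convention that $(i,j)$ and $(j,i)$ are identified. Hence $\sigma^{m+1}(\gamma) = \gamma$, giving $r_\gamma^{m+1} = \mathrm{id}$. To see the order is exactly $m+1$ and not a proper divisor, I would note that for $0 < k < m+1$ the index shift by $-k$ does not return to the original (equivalence class of the) diagonal: $(a_{i-k}, a_{i+m+1-k})$ equals $(a_i, a_{i+m+1})$ only if $\{i-k, i+m+1-k\} \equiv \{i, i+m+1\} \pmod{2m+2}$, which forces $k \equiv 0$ or $k \equiv m+1 \pmod{2m+2}$, impossible for $0<k<m+1$.

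The main obstacle I anticipate is purely bookkeeping rather than conceptual: one must handle the modular arithmetic on the index $i$ carefully, tracking both the reduction modulo $2m+2$ in the vertex labels and the identification of a diagonal with its reverse, so that the count of $m+1$ distinct main diagonals is justified cleanly. A secondary technical point is verifying that each intermediate rotated diagonal $\sigma^j(\gamma)$ genuinely remains a non-intersecting $m$-diagonal of $\Pi$, so that $r_\gamma^j(\Delta)$ is a legitimate $(m+2)$-angulation at every step and the equality $r_\gamma^{m+1}=\mathrm{id}$ makes sense as an identity of mutations; this follows because the other diagonals of $\Delta$ bound $P_\gamma$ and any main diagonal of $P_\gamma$ automatically avoids crossing them, but it deserves an explicit sentence.
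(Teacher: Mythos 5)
Your proposal is correct and follows essentially the same route as the paper's own proof: both track $\gamma$ under iterated $\sigma$ via the formula $\sigma^j(\gamma)=(a_{i-j},a_{i+m+1-j})$ with indices modulo $2m+2$, conclude $\sigma^{m+1}(\gamma)=(a_{i+m+1},a_i)=\gamma$, rule out an earlier return, and transfer the order of $\sigma$ to $r_\gamma$ via $r_\gamma^j(\Delta)=(\Delta\setminus\gamma)\cup\{\sigma^j(\gamma)\}$. Your explicit modular-arithmetic check that $\sigma^k(\gamma)\neq\gamma$ for $0<k<m+1$ spells out what the paper dismisses as ``straightforward,'' which is a welcome addition but not a different argument.
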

\begin{proof}
Let $\{a_0,\ldots, a_{2m+1}\}$ be the vertices of $P_{\gamma}$, with $a_0<a_1<\ldots < a_{2m+1}$, and let $i\in\{0,\ldots, m\}$ be such that $\gamma=(a_i,a_{i+m+1})$. Definition \ref{def_rotation} implies that
$$
\sigma^j(\gamma)=(a_{i-j \text{ (mod 2m+2)}}, a_{i+m+1-j \text{ (mod 2m+2)}})
$$
for all $j\geq 0$. Hence 
$$
\sigma^{m+1}(\gamma)=\sigma^j(\gamma)=(a_{i-(m+1) \text{ (mod 2m+2)}}, a_{i \text{ (mod 2m+2)}})=(a_{i+m+1},a_i)
$$ 
is the $m$-diagonal $\gamma$ by Definition \ref{defn_m_gulation}. It is also straightforward to check that $\sigma^j(\gamma)\neq \gamma$ for all $1\leq j<m+1$. Thus $\sigma$ has order $m+1$. The definition of $r_{\gamma}$ given in Definition \ref{def_rotation} implies that
$$
r_{\gamma}^j(\Delta)=(\Delta\setminus\gamma)\cup \{\sigma^j(\gamma)\}
$$
for all $j\geq 1$. Therefore, also $r_{\gamma}$ has order $m+1$.
\end{proof}

\begin{ese}\label{ese_4_gulations}
Let $m=2$, $n=5$. Consider the diagonal $\gamma$ in the 4-angulation $\Delta$ of the regular dodecagon given below. The hexagon $P_{\gamma}$ is coloured in picture, and the result of applying $r_{\gamma}$ is displayed. 
\begin{center}
\includegraphics[scale=0.29]{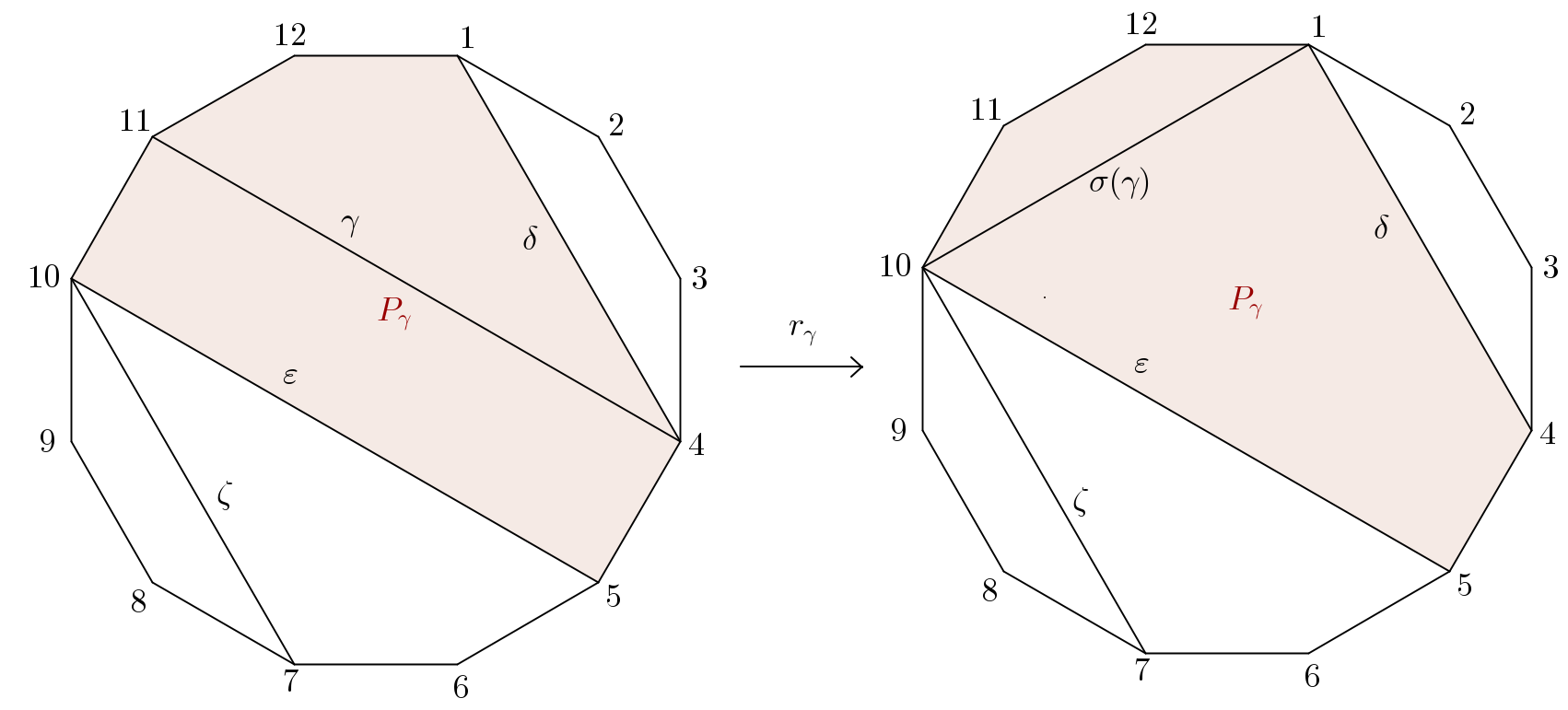}
\end{center}
The application of $r_{\gamma}$ might hence be thought as a counterclockwise rotation of $\gamma$ inside the hexagon $P_{\gamma}$.
\end{ese}

\section{Connection between $(m+2)$-angulations and $m$-coloured quivers}\label{conn_gulations_quivers}

In this section we introduce a map $\Psi$, that associates an $m$-coloured quiver to an $(m+2)$-angulation of $\Pi$. This construction will be done in a way such that the two concepts of mutation we introduced (at a vertex for an $m$-coloured quiver, and at a diagonal for an $(m+2)$-angulation of $\Pi$) commute. This result, that we will prove in detail, is used in \cite{T} to show that $\Psi$ induces a bijection between $(m+2)$-angulations of $\Pi$ and $m$-coloured quivers of mutation type $\overrightarrow{A_{n-1}}$.

\begin{defn}\label{defn_ass_quiv_to_m_gul}
Let $\Delta$ be an $(m+2)$-angulation of $\Pi$. We associate to $\Delta$ an $m$-coloured quiver $\Psi(\Delta)$ as follows.
\begin{itemize}
\item The vertices of $\Psi(\Delta )$ are the $m$-diagonals of $\Delta$.
\item If $\gamma, \delta$ are $m$-diagonals of $\Delta$ which are edges of some $(m+2)$-gon in the $(m+2)$-angulation $\Delta$, then $\Psi(\Delta)$ has an arrow from $\gamma$ to $\delta$. In this case, the colour of the arrow is the number of edges forming the segment of the boundary of the $(m+2)$-gon which lie between $\gamma$ and $\delta$, counterclockwise from $\gamma$ and clockwise from $\delta$.
\end{itemize}
\end{defn}

\begin{ese}
The 2-coloured quiver $\Psi(r_{\gamma}(\Delta))$ associated to the 4-angulation $r_{\gamma}(\Delta)$ in Example \ref{ese_4_gulations} is given by
$$
\xymatrix{ \sigma(\gamma)\ar@<0.3ex>[rr]\ar@<-1pt>@{}[rr]^{\scalemath{0.5}{(2)}}\ar@<0.3ex>[dr]\ar@<-1pt>@{}[dr]^{\scalemath{0.5}{(0)}} & & \delta\ar@<0.3ex>[dl]\ar@<-1pt>@{}[dl]^{\scalemath{0.5}{(1)}}\ar@<0.3ex>[ll]\ar@<-1pt>@{}[ll]^{\scalemath{0.5}{(0)}}\\
 & \varepsilon\ar@<0.3ex>[ur]\ar@<-1pt>@{}[ur]^{\scalemath{0.5}{(1)}}\ar@<0.3ex>[ul]\ar@<-1pt>@{}[ul]^{\scalemath{0.5}{(2)}}\ar@<0.3ex>[dl]\ar@<-1pt>@{}[dl]^{\scalemath{0.5}{(0)}} & \\
\zeta\ar@<0.3ex>[ur]\ar@<-1pt>@{}[ur]^{\scalemath{0.5}{(2)}}  & & }
$$
\end{ese}

In the following, we will identify $\Pi$ with a circle with $mn+2$ marked points, numbered clockwise from 1 to $mn+2$.

\begin{defn}
We call $\widehat{A}_{n-1}$ the following $(m+2)$-angulation of $\Pi$.
\begin{center}
\includegraphics[scale=0.352]{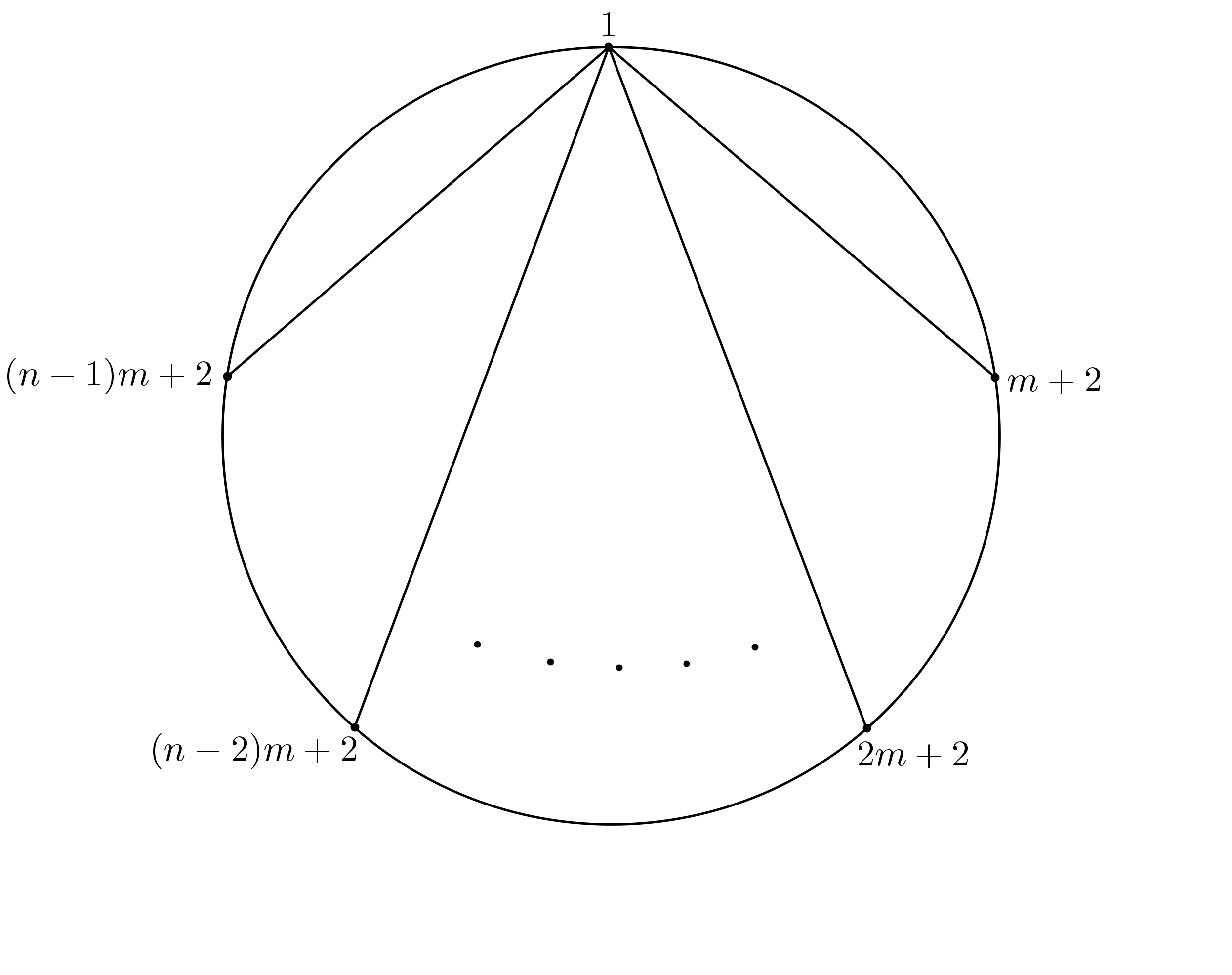}
\end{center}
\end{defn}

\begin{rem}\label{rem_connection_betweenAn}
Consider the $m$-coloured quiver $\overrightarrow{A_{n-1}}$ introduced in Definition \ref{defn_quivAn1}. Then
$$
\overrightarrow{A_{n-1}}=\Psi(\widehat{A}_{n-1})
$$
\end{rem}

We want to prove that every $(m+2)$-angulation of $\Pi$ can be obtained from $\widehat{A}_{n-1}$ by iteratively mutating it at some $m$-diagonals. In order to do this, we introduce the concept of distance between a diagonal of an $(m+2)$-angulation $\Delta$ of $\Pi$ and the vertex 1.

\begin{defn}
Let $\Delta$ be an $(m+2)$-angulation of $\Pi$, $\gamma\in \Delta$ one of its $m$-diagonals. We define the \textbf{distance} $d_{\Delta}(\gamma)$ between $\gamma$ and the vertex 1 of $\Pi$ as follows:
\begin{itemize}
\item $d_{\Delta}(\gamma)=0$ if $\gamma=(1,i)$ for some $i$;
\item $d_{\Delta}(\gamma)=k$ if $\gamma$ is an edge of a $(2m+2)$-gon $P_{\delta}$ defined in Proposition \ref{prop_properties_m2_guls}, for some $\delta\in \Delta$ with $d_{\Delta}(\delta)=k-1$. 
\end{itemize}
\end{defn}

\begin{ese}
Consider the $4$-angulation $r_{\gamma}(\Delta)$ of $\Pi$ given in Example \ref{ese_4_gulations}. Then $d_{r_{\gamma}(\Delta)}(\delta)=0=d_{r_{\gamma}(\Delta)}(\sigma (\gamma))$, $d_{r_{\gamma}(\Delta)}(\varepsilon)=1$, $d_{r_{\gamma}(\Delta)}(\zeta)=2$.
\end{ese}

\begin{lem}\label{lem_rotation_An_1}
Let $\Delta$ be an $(m+2)$-angulation of $\Pi$. Then there exist $m$-diagonals $\gamma_1,\ldots, \gamma_{\ell}$ such that
$$
\Delta=r_{\gamma_1}\ldots r_{\gamma_{\ell}}(\widehat{A}_{n-1}).
$$ 
\end{lem}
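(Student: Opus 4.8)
The plan is to prove the statement by induction on the distance function $d_{\Delta}$, showing that any $(m+2)$-angulation $\Delta$ can be transported to the standard one $\widehat{A}_{n-1}$ by a sequence of mutations $r_{\gamma}$, and then reversing the sequence (using Proposition \ref{rem_order_rotation}, which guarantees each $r_{\gamma}$ is invertible since it has finite order $m+1$). The key invariant is the quantity $D(\Delta)=\sum_{\gamma\in\Delta}d_{\Delta}(\gamma)$, the total distance of all $m$-diagonals from vertex $1$. I would argue that $\widehat{A}_{n-1}$ is characterized as essentially the unique $(m+2)$-angulation all of whose $m$-diagonals are incident to vertex $1$, i.e.\ the one with $D(\Delta)=0$, and that whenever $D(\Delta)>0$ one can find a single mutation that strictly decreases $D$.

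The main steps, in order, are as follows. First I would verify the base case: if every $m$-diagonal of $\Delta$ has distance $0$, then each is of the form $(1,i)$, and since non-intersecting $m$-diagonals through a common vertex $1$ must be the ``fan'' diagonals $(1,1+jm+1)$ for $j=1,\ldots,n-1$, maximality forces $\Delta=\widehat{A}_{n-1}$. Second, for the inductive step, suppose $D(\Delta)>0$ and let $\gamma\in\Delta$ be an $m$-diagonal realizing the maximal distance $k=d_{\Delta}(\gamma)\geq 1$. The strategy is to rotate $\gamma$ inside its $(2m+2)$-gon $P_{\gamma}$ so as to bring it closer to vertex $1$: I claim there is a choice of iterate $r_{\gamma}^{\,t}$ (equivalently some power $\sigma^t$ of the rotation on the vertices $a_0<\cdots<a_{2m+1}$ of $P_{\gamma}$) such that the resulting diagonal $\sigma^t(\gamma)$ has strictly smaller distance, while all other diagonals either keep their distance or are unaffected, so that $D$ decreases. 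Third, I would iterate this to reach $D=0$, obtaining $\widehat{A}_{n-1}=r_{\gamma_{\ell}}\cdots r_{\gamma_1}(\Delta)$ for some sequence; inverting each rotation via Proposition \ref{rem_order_rotation} (writing $r_{\gamma}^{-1}=r_{\gamma}^{\,m}$) then yields $\Delta=r_{\gamma_1}'\cdots r_{\gamma_{\ell}}'(\widehat{A}_{n-1})$ in the desired form, after relabelling diagonals appropriately since the vertex set of the quiver/angulation is tracked by the diagonals themselves.

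The hard part will be the rigorous bookkeeping in the inductive step: one must show that a suitable rotation of a maximal-distance diagonal genuinely reduces $D$ rather than merely moving the problem elsewhere, and that the distance function is well-defined and behaves predictably under a single mutation (in particular that $r_{\gamma}$ changes only the distance of $\gamma$ and possibly the diagonals whose $P_{\delta}$ contained $\gamma$). The subtlety is that $d_{\Delta}$ is defined recursively through the nesting of the $(2m+2)$-gons $P_{\delta}$, so rotating one diagonal can in principle alter which polygon a neighbouring diagonal belongs to. I would isolate this by proving a lemma that a mutation at a diagonal $\gamma$ of \emph{maximal} distance $k$ leaves the distances of all diagonals of distance $<k-1$ unchanged and decreases $d(\gamma)$ to $k-1$; intuitively, a maximal-distance diagonal is ``at the far end'' of the nesting, so rotating it toward vertex $1$ cannot disturb the diagonals nearer to $1$. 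Once this monotonicity lemma is in hand, the induction closes immediately.
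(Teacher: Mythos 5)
Your overall scaffolding (induct on a distance-based quantity, reduce $\Delta$ to the fan $\widehat{A}_{n-1}$, then invert the rotations via Proposition \ref{rem_order_rotation}) is the same as the paper's, and your base case and the final inversion step are fine. But the core of your inductive step is false for $k\geq 2$, i.e.\ in exactly the situations where it is needed. You propose to mutate a diagonal $\gamma$ of \emph{maximal} distance $k$ and claim some iterate $r_\gamma^{\,t}$ strictly decreases $d(\gamma)$. The obstruction is that $r_\gamma$ only ever replaces $\gamma$ by a diagonal $\sigma^t(\gamma)$ lying inside the same $(2m+2)$-gon $P_\gamma$, and the set of diagonals of $\Delta$ sharing an $(m+2)$-gon with the rotated diagonal is, for every $t$, exactly the set of diagonals of $\Delta$ on the boundary of $P_\gamma$ --- the same neighbourhood as that of $\gamma$. (This is visible in Lemma \ref{lem_local_mutations}: in all cases (a)--(d) the mutated vertex $k$ remains joined to precisely $u_1,\ldots,u_a,v_1,\ldots,v_b$.) Since $d(\gamma)=k$, every boundary diagonal of $P_\gamma$ has distance $\geq k-1$, and these diagonals are not moved by $r_\gamma$; moreover any \emph{new} adjacencies created by the rotation join two such boundary diagonals, so they cannot create a shorter route to vertex $1$. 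Hence $d\bigl(\sigma^t(\gamma)\bigr)=1+\min\{d(\delta)\colon\delta\ \text{adjacent}\}=k$ again, unless $\sigma^t(\gamma)$ itself becomes incident to vertex $1$. But that requires vertex $1$ to be a vertex of $P_\gamma$, which (whenever any diagonal of $\Delta$ passes through vertex $1$ at all, so that distances are defined) forces $d(\gamma)\leq 1$: the $(m+2)$-gon of $P_\gamma$ containing vertex $1$ then has a distance-$0$ diagonal as an edge, adjacent to $\gamma$. So for $k\geq 2$ no iterate of $r_\gamma$ lowers $d(\gamma)$, your invariant $D$ does not move, and the induction never gets started. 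The intuition ``rotate the far diagonal toward vertex $1$'' is precisely what a mutation cannot do: it is a local move confined to $P_\gamma$, which does not contain vertex $1$.

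The paper's inductive step goes in the opposite direction, and this is the idea you are missing: it mutates the diagonals at distance exactly $1$, not those at maximal distance. If $d_\Delta(\gamma)=1$ then $P_\gamma$ \emph{does} contain vertex $1$ (it contains the $(m+2)$-gon that $\gamma$ shares with a distance-$0$ diagonal, which has $1$ as a vertex), so some power $r_\gamma^{\,s}$ makes $\gamma$ incident to vertex $1$. Doing this to all distance-$1$ diagonals enlarges the fan at vertex $1$, and one then checks, by induction on $i$, that every old distance-$i$ diagonal has distance $i-1$ in the new $(m+2)$-angulation; hence $\max_{\gamma}d(\gamma)$ drops from $k$ to $k-1$ and the outer induction (on the maximum distance rather than on the sum) closes, after which the rotations are inverted exactly as you do. Your summed invariant $D$ could be retained, but only if the diagonals you mutate are the ones nearest to vertex $1$, not the ones farthest from it.
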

\begin{proof}
We prove the result by induction on $k=\max \{d_{\Delta} (\gamma) |\gamma\in\Delta\}$.
\begin{itemize}
\item If $k=0$, then $\Delta=\widehat{A}_{n-1}$.
\item Let $k>0$. Order the $m$-diagonals of $\Delta$ as
$$
\Delta=\{\gamma_{0,1}, \ldots, \gamma_{0,t_0}, \gamma_{1,1}, \ldots, \gamma_{1,t_1}, \ldots, \gamma_{k,1}, \ldots, \gamma_{k,t_k}\},
$$
where $d_{\Delta} (\gamma_{i,j})=i$ for all $i,j$. Fix $j\in\{1,\ldots, t_1\}$. Since $d_{\Delta}(\gamma_{1,j})=1$, then we know that the $(2m+2)$-gon $P_{\gamma_{1,j}}$ contains the vertex 1. Hence we can mutate the $m$-diagonal $\gamma_{1,j}$ a certain number of times, say $s_j$, so that $\sigma^{s_j}(\gamma_{1,j})$ has 1 as endpoint. Now, consider the following $(m+2)$-angulation of $\Pi$.
$$
\Delta '={\scriptstyle r_{\gamma_{1,t_1}}^{s_{t_1}}\ldots r_{\gamma_{1,1}}^{s_1}}(\Delta){\scriptstyle =\{\underbrace{\gamma_{0,1}}_{=\gamma_{0,1}'}, \ldots, \underbrace{\gamma_{0,t_0}}_{=\gamma_{0,t_0}'}, \underbrace{\sigma^{s_1}(\gamma_{1,1})}_{=\gamma_{1,1,}'}, \ldots, \underbrace{\sigma^{s_{t_1}}(\gamma_{1,t_1})}_{=\gamma_{1,t_1}'}, \ldots, \underbrace{\gamma_{k,1}}_{=\gamma_{k,1}'}, \ldots, \underbrace{\gamma_{k,t_k}}_{=\gamma_{k,t_k}'}\}}.
$$
By assumption we have $d_{\Delta'}(\gamma_{0,j}')=0$ for all $j$. 

Also, $d_{\Delta'}(\gamma_{i,j}')=i-1$ for $i=1,\ldots, k$ and all $j$. This can be shown by induction on $i\geq 1$. 
\begin{itemize}
\item If $i=1$, then by construction we know that all the $m$-diagonals $\gamma_{1,j}'=\sigma^{s_i}(\gamma_{1,j})$ have 1 as endpoint. This means that $d_{\Delta'}(\gamma_{1,j}')=0$ for all $j$.
\item Let $i>1$, and fix $j\in\{1,\ldots, t_i\}$. Then, by contruction, we can find an $m$-diagonal $\gamma_{i-1,\ell}'$ that is an edge of $P_{\gamma_{i,j}'}$. But $d_{\Delta'}(\gamma_{i-1,\ell}')=i-2$ by induction hypothesis, and thus $d_{\Delta'}(\gamma_{i,j}')=i-1$ by definition of distance $d_{\Delta'}$.
\end{itemize}
Hence $\max\{d_{\Delta '}(\gamma ') | \gamma ' \in \Delta '\}=k-1$. Thus, by induction we get
\begin{equation}\label{eq_widehat_An-1_Delta}
\widehat{A}_{n-1}=r_{\tilde{\gamma}_1}\ldots r_{\tilde{\gamma}_s}(\Delta')=r_{\tilde{\gamma}_1}\ldots r_{\tilde{\gamma}_s}r_{\gamma_{1,t_1}}^{s_{t_1}}\ldots r_{\gamma_{1,1}}^{s_1}(\Delta)
\end{equation}
for some $m$-diagonals $\tilde{\gamma}_1,\ldots, \tilde{\gamma}_s$.

But all mutations $r_{\gamma}$ have finite order $m+1$ by Proposition \ref{rem_order_rotation}, and hence we can invert all mutations in \eqref{eq_widehat_An-1_Delta}. Therefore we get the statement.
\end{itemize}
\end{proof}

\begin{defn}
Let $Q$ be an $m$-coloured quiver, $k\in Q_0$ one of its vertices. Define
\begin{itemize}
\item $\mathcal{N}_{Q,k}=\{i\in Q_0 | \text{ there are arrows } \xymatrix@1{k\ar@<0.4ex>[r] & i\ar@<0.4ex>[l]}\}$ to be the \textbf{neighbourhood of $k$ in $Q$}.
\item $F_{Q,k}$ as the full subquiver of $Q$ with vertex set $\mathcal{N}_{Q,k}\cup\{k\}$.
\end{itemize}
\end{defn}

\begin{defn}
\begin{itemize}
\item We call an $m$-coloured quiver $Q$  \textbf{complete} if, for every pair of vertices $i,j\in Q_0$, there is exactly one pair of arrows $\xymatrix@1{i\ar@<0.3ex>[r] & j\ar@<0.3ex>[l]}$.
\item Let $Q$, $Q'$ quivers with vertex sets $\{k,u_1,\ldots, u_i\}$ and $\{k,v_1,\ldots, v_j\}$ respectively, for some $i,j\geq 0$. Then the \textbf{glueing of $Q$ and $Q'$ at vertex $k$} is the quiver with vertex set $\{k,u_1,\ldots, u_i,v_1,\ldots, v_j\}$ and arrows $Q_1\cup Q_1'$.
\end{itemize}
\end{defn}

The following Lemma will help us understand how the $m$-coloured quiver associated to an $(m+2)$-angulation of $\Pi$ mutates at a vertex. Its proof follows by a straightforward application of Definition \ref{defn_ass_quiv_to_m_gul}.

\begin{lem}\label{lem_local_mutations}
Let $Q$ be an $m$-coloured quiver, $k\in Q_0$. Suppose that $F_{Q,k}$ has the following shape
$$
\xymatrixcolsep{0.4pc}\xymatrix{ & & u_a\ar@<0.3ex>[dll]\ar@<0.3ex>[ddl]\ar@<0.3ex>[ddr]\ar@<0.3ex>[drr] & & & v_1\ar@<0.3ex>[dl]\ar@<0.3ex>[rr]\ar@<0.3ex>[ddr]\ar@<0.3ex>[drrr] & & v_2\ar@<0.3ex>[ll]\ar@<0.3ex>[dlll]\ar@<0.3ex>[ddl]\ar@{.}[dr] & \\
u_{a-1}\ar@<0.3ex>[urr]\ar@<0.3ex>[drrr]\ar@<0.3ex>[rrrr]\ar@{.}[dr] & & & & k\ar@<0.3ex>[ull]\ar@<0.3ex>[llll]\ar@<0.3ex>[dlll]\ar@<0.3ex>[dl]\ar@<0.3ex>[ur]\ar@<0.3ex>[urrr]\ar@<0.3ex>[drr]\ar@<0.3ex>[rrrr] & & & & v_{b-1}\ar@<0.3ex>[llll]\ar@<0.3ex>[ulll]\ar@<0.3ex>[dll] \\
& u_2\ar@<0.3ex>[uur]\ar@<0.3ex>[rr]\ar@<0.3ex>[urrr] & & u_1\ar@<0.3ex>[uul]\ar@<0.3ex>[ulll]\ar@<0.3ex>[ll]\ar@<0.3ex>[ur] & & & v_b\ar@<0.3ex>[uul]\ar@<0.3ex>[ull]\ar@<0.3ex>[uur]\ar@<0.3ex>[urr] & &
}
$$
where the arrows can have any colour.

In words, suppose that $F_{Q,k}$ is the quiver obtained by glueing the complete quiver on $\{k,u_1,\ldots, u_a\}$ and the complete quiver on $\{k,v_1,\ldots, v_b\}$ at vertex $k$, for some $a,b\geq 0$.

For each pair of vertices $h,\ell\in \mathcal{N}_{Q,k}$ connected by an arrow, let $c_{h\ell}$ be the colour of the arrow $h\to \ell$. Suppose that $c_{k,u_a}<c_{k,u_{a-1}}<\ldots < c_{k,u_1}$ and $c_{k,v_b}< c_{k,v_{b-1}}<\ldots<c_{k,v_1}$. 

Then the mutation $\mu_k (Q)$ of $Q$ at vertex $k$ fixes the arrows (together with their colours) of $Q_1\setminus (F_{Q,k})_1$, and acts as follows on $F_{Q,k}$ (the blue vertices will denote those vertices that have moved, and red arrows will denote the new arrows):

\begin{itemize}
\item[a)] If $c_{k,u_a}\neq 0\neq c_{k,v_b}$, then $\mu_k(F_{Q,k})$ has the same shape as $F_{Q,k}$:
\[
\xymatrixcolsep{0.4pc}\xymatrix{ & & u_a\ar@<0.3ex>[dll]\ar@<0.3ex>[ddl]\ar@<0.3ex>[ddr]\ar@<0.3ex>[drr] & & & v_1\ar@<0.3ex>[dl]\ar@<0.3ex>[rr]\ar@<0.3ex>[ddr]\ar@<0.3ex>[drrr] & & v_2\ar@<0.3ex>[ll]\ar@<0.3ex>[dlll]\ar@<0.3ex>[ddl]\ar@{.}[dr] & \\
u_{a-1}\ar@<0.3ex>[urr]\ar@<0.3ex>[drrr]\ar@<0.3ex>[rrrr]\ar@{.}[dr] & & & & k\ar@<0.3ex>[ull]\ar@<0.3ex>[llll]\ar@<0.3ex>[dlll]\ar@<0.3ex>[dl]\ar@<0.3ex>[ur]\ar@<0.3ex>[urrr]\ar@<0.3ex>[drr]\ar@<0.3ex>[rrrr] & & & & v_{b-1}\ar@<0.3ex>[llll]\ar@<0.3ex>[ulll]\ar@<0.3ex>[dll] \\
& u_2\ar@<0.3ex>[uur]\ar@<0.3ex>[rr]\ar@<0.3ex>[urrr] & & u_1\ar@<0.3ex>[uul]\ar@<0.3ex>[ulll]\ar@<0.3ex>[ll]\ar@<0.3ex>[ur] & & & v_b\ar@<0.3ex>[uul]\ar@<0.3ex>[ull]\ar@<0.3ex>[uur]\ar@<0.3ex>[urr] & &
}
\] 
and the colours of the arrows change as follows.
\begin{align*}
\widetilde{c}_{u_i,u_j} = & c_{u_i,u_j}, & i,j\in\{1,\ldots , a\} \\
\widetilde{c}_{v_i,v_j} = & c_{v_i,v_j}, & i,j\in\{1,\ldots , b\} \\
\widetilde{c}_{k,u_i} = & c_{k,u_i} -1, & i\in\{1,\ldots, a\} \\
\widetilde{c}_{k,v_i}= & c_{k,v_i}-1, & i\in\{1,\ldots, b\}
\end{align*}

\item[b)] 
If $c_{k,u_a}=0$, $c_{k,v_b}\neq 0$, then $\mu_k(F_{Q,k})$ is given by
\[
\xymatrixcolsep{0.4pc}\xymatrix{ & & u_{a-1}\ar@<0.3ex>[dll]\ar@<0.3ex>[ddl]\ar@<0.3ex>[ddr]\ar@<0.3ex>[drr] & & & {\color{blue}u_a}\ar@[red]@<0.3ex>[dl]\ar@[red]@<0.3ex>[rr]\ar@[red]@<0.3ex>[ddr]\ar@[red]@<0.3ex>[drrr] & & v_1\ar@[red]@<0.3ex>[ll]\ar@<0.3ex>[dlll]\ar@<0.3ex>[ddl]\ar@{.}[dr] & \\
u_{a-2}\ar@<0.3ex>[urr]\ar@<0.3ex>[drrr]\ar@<0.3ex>[rrrr]\ar@{.}[dr] & & & & k\ar@<0.3ex>[ull]\ar@<0.3ex>[llll]\ar@<0.3ex>[dlll]\ar@<0.3ex>[dl]\ar@[red]@<0.3ex>[ur]\ar@<0.3ex>[urrr]\ar@<0.3ex>[drr]\ar@<0.3ex>[rrrr] & & & & v_{b-1}\ar@<0.3ex>[llll]\ar@[red]@<0.3ex>[ulll]\ar@<0.3ex>[dll] \\
& u_2\ar@<0.3ex>[uur]\ar@<0.3ex>[rr]\ar@<0.3ex>[urrr] & & u_1\ar@<0.3ex>[uul]\ar@<0.3ex>[ulll]\ar@<0.3ex>[ll]\ar@<0.3ex>[ur] & & & v_b\ar@[red]@<0.3ex>[uul]\ar@<0.3ex>[ull]\ar@<0.3ex>[uur]\ar@<0.3ex>[urr] & &
}
\]
and the colours of the arrows change as follows.
\begin{align*}
\widetilde{c}_{u_i,u_j}= & c_{u_i,u_j}, & i,j\in\{1,\ldots, a-1\} \\
\widetilde{c}_{v_i,v_j} = & c_{v_i,v_j}, & i,j\in\{1,\ldots, b\} \\
\widetilde{c}_{k,u_i} = & c_{k,u_i} -1, & i\in\{1,\ldots, a\} \\
\widetilde{c}_{k,v_i} = & c_{k,v_i} -1, & i\in\{1,\ldots b\} \\ 
\widetilde{c}_{v_i,u_a} = & c_{v_i,k}, & i\in\{1,\ldots, b\}
\end{align*}

\item[c)] If $c_{k,u_a}\neq 0, c_{k,v_b}=0$, then $\mu_k(F_{Q,k})$ is given by 
\[
\xymatrixcolsep{0.4pc}\xymatrix{ & & u_a\ar@<0.3ex>[dll]\ar@<0.3ex>[ddl]\ar@[red]@<0.3ex>[ddr]\ar@<0.3ex>[drr] & & & v_1\ar@<0.3ex>[dl]\ar@<0.3ex>[rr]\ar@<0.3ex>[ddr]\ar@<0.3ex>[drrr] & & v_2\ar@<0.3ex>[ll]\ar@<0.3ex>[dlll]\ar@<0.3ex>[ddl]\ar@{.}[dr] & \\
u_{a-1}\ar@<0.3ex>[urr]\ar@[red]@<0.3ex>[drrr]\ar@<0.3ex>[rrrr]\ar@{.}[dr] & & & & k\ar@<0.3ex>[ull]\ar@<0.3ex>[llll]\ar@<0.3ex>[dlll]\ar@[red]@<0.3ex>[dl]\ar@<0.3ex>[ur]\ar@<0.3ex>[urrr]\ar@<0.3ex>[drr]\ar@<0.3ex>[rrrr] & & & & v_{b-2}\ar@<0.3ex>[llll]\ar@<0.3ex>[ulll]\ar@<0.3ex>[dll] \\
& u_1\ar@<0.3ex>[uur]\ar@[red]@<0.3ex>[rr]\ar@<0.3ex>[urrr] & & {\color{blue}v_b}\ar@[red]@<0.3ex>[uul]\ar@[red]@<0.3ex>[ulll]\ar@[red]@<0.3ex>[ll]\ar@[red]@<0.3ex>[ur] & & & v_{b-1}\ar@<0.3ex>[uul]\ar@<0.3ex>[ull]\ar@<0.3ex>[uur]\ar@<0.3ex>[urr] & &
}
\]
and the colours of the arrows change as follows.
\begin{align*}
\widetilde{c}_{u_i,u_j} = & c_{u_i,u_j}, & i,j\in\{1,\ldots, a\} \\
\widetilde{c}_{v_i,v_j} = & c_{v_i,v_j}, & i,j\in\{1,\ldots, b-1\} \\
\widetilde{c}_{k,u_i} = & c_{k,u_i}-1, & i\in\{1,\ldots, a\} \\
\widetilde{c}_{k,v_i} = & c_{k,v_i} -1, & i\in\{1,\ldots, b\} \\
\widetilde{c}_{u_i,v_b} = & c_{u_i,k}, & i\in\{1,\ldots, a\}
\end{align*}

\item[d)] If $c_{k,u_a}=0=c_{k,v_b}$, then $\mu_k(F_{Q,k})$ is given by
\[\xymatrixcolsep{0.4pc}
\xymatrixcolsep{0.4pc}\xymatrix{ & & u_{a-1}\ar@<0.3ex>[dll]\ar@<0.3ex>[ddl]\ar@[red]@<0.3ex>[ddr]\ar@<0.3ex>[drr] & & & {\color{blue}u_a}\ar@[red]@<0.3ex>[dl]\ar@[red]@<0.3ex>[rr]\ar@[red]@<0.3ex>[ddr]\ar@[red]@<0.3ex>[drrr] & & v_1\ar@[red]@<0.3ex>[ll]\ar@<0.3ex>[dlll]\ar@<0.3ex>[ddl]\ar@{.}[dr] & \\
u_{a-2}\ar@<0.3ex>[urr]\ar@[red]@<0.3ex>[drrr]\ar@<0.3ex>[rrrr]\ar@{.}[dr] & & & & k\ar@<0.3ex>[ull]\ar@<0.3ex>[llll]\ar@<0.3ex>[dlll]\ar@[red]@<0.3ex>[dl]\ar@[red]@<0.3ex>[ur]\ar@<0.3ex>[urrr]\ar@<0.3ex>[drr]\ar@<0.3ex>[rrrr] & & & & v_{b-2}\ar@<0.3ex>[llll]\ar@[red]@<0.3ex>[ulll]\ar@<0.3ex>[dll] \\
& u_1\ar@<0.3ex>[uur]\ar@[red]@<0.3ex>[rr]\ar@<0.3ex>[urrr] & & {\color{blue}v_b}\ar@[red]@<0.3ex>[uul]\ar@[red]@<0.3ex>[ulll]\ar@[red]@<0.3ex>[ll]\ar@[red]@<0.3ex>[ur] & & & v_{b-1}\ar@[red]@<0.3ex>[uul]\ar@<0.3ex>[ull]\ar@<0.3ex>[uur]\ar@<0.3ex>[urr] & &
}
\]
and the colours of the arrows change as follows.
\begin{align*}
\widetilde{c}_{u_i,u_j}= & c_{u_i,u_j}, & i,j\in\{1,\ldots, a-1\} \\
\widetilde{c}_{v_i,v_j} = & c_{v_i,v_j}, & i,j\in\{1,\ldots, b-1\} \\
\widetilde{c}_{k,u_i} = & c_{k,u_i}-1, & i\in\{1,\ldots, a\} \\
\widetilde{c}_{k,v_i} = & c_{k,v_i} -1, & i\in\{1,\ldots, b\} \\
\widetilde{c}_{u_i, v_b} = & c_{u_i, k}, & i\in\{1,\ldots, a-1\} \\
\widetilde{c}_{v_i, u_a} = & c_{v_i, k}, & i\in\{1,\ldots, b-1\}
\end{align*}

\end{itemize}
\end{lem}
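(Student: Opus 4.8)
The plan is to prove the lemma by running the algorithmic description of mutation in Proposition \ref{prop_equiv_col_quiv_mut} case by case, while using the geometric meaning of the colours supplied by Definition \ref{defn_ass_quiv_to_m_gul} to control the cancellations in Step 3. First I would record the picture behind the hypothesis: the two complete subquivers glued at $k$ are the $\Psi$-images of the two $(m+2)$-gons $P^{(1)},P^{(2)}$ sharing the edge $\gamma$ corresponding to $k$, the $u_i$ (resp. $v_i$) being the $m$-diagonals bounding $P^{(1)}$ (resp. $P^{(2)}$). The hypothesis $c_{k,u_a}<\dots<c_{k,u_1}$ just says we have listed the $u_i$ in order of increasing counterclockwise boundary-distance from $\gamma$, and likewise for the $v_i$. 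The only colour relations I will need are skew-symmetry $c_{\delta,\gamma}=m-c_{\gamma,\delta}$ and the \emph{edge-count cocycle}: for three edges of one $(m+2)$-gon, read in counterclockwise cyclic order $\gamma,\delta,\varepsilon$, one has $c_{\gamma,\delta}+c_{\delta,\varepsilon}+c_{\varepsilon,\gamma}=m-1$.

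Next I would run the three steps of Proposition \ref{prop_equiv_col_quiv_mut}. Step 1 lowers every colour $c_{k,u_i},c_{k,v_i}$ by one, which already accounts for all the formulas $\widetilde c_{k,u_i}=c_{k,u_i}-1$, $\widetilde c_{k,v_i}=c_{k,v_i}-1$; the key point is that the wrap-around $0\mapsto m$ occurs precisely for $u_a$ when $c_{k,u_a}=0$ and for $v_b$ when $c_{k,v_b}=0$, turning the arrow $k\to u_a$ (resp. $k\to v_b$) into one of colour $m$, i.e. an arrow $u_a\to k$ (resp. $v_b\to k$) of colour $0$. Step 2 fires exactly when $Q$ has an out-arrow of colour $0$ at $k$, that is, exactly when $c_{k,u_a}=0$ and/or $c_{k,v_b}=0$; this is precisely the trichotomy separating cases (a)--(d). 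When no such arrow exists (case (a)) Step 2 is vacuous, so the only change is the uniform colour shift and the shape is preserved. When $c_{k,u_a}=0$ (cases (b), (d)) every neighbour $i$ with $c_{k,i}\neq 0$ produces a new arrow $i\to u_a$ of colour $c_{i,k}=m-c_{k,i}$, and symmetrically for $v_b$ when $c_{k,v_b}=0$.

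Finally I would carry out Step 3 and read off the result. The newly created cross-block arrows $v_i\to u_a$ (and $u_i\to v_b$) have no pre-existing companion, since there were no $u$--$v$ arrows to begin with, so they survive with colour $c_{v_i,k}$ (resp. $c_{u_i,k}$), matching the tables. For each old neighbour $u_s$ with $s<a$, Step 2 also creates an arrow $u_s\to u_a$ of colour $m-c_{k,u_s}$, which competes with the existing one; applying the cocycle to $\gamma,u_a,u_s$ with $c_{\gamma,u_a}=0$ forces the existing colour to be $c_{u_s,u_a}=m-c_{k,u_s}+1$, so the two arrows differ in colour by exactly $1$ and Step 3 annihilates them, detaching $u_a$ from the $u$-block and attaching it to the $v$-block. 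The same computation handles $v_b$, yielding cases (b), (c), (d). The main obstacle is exactly this Step 3 bookkeeping: one must verify, in each case and keeping the $\bmod\,(m+1)$ wrap-around straight, that every arrow tying the moving vertex to its former block cancels to zero while the cross-block arrows survive, and that the two resulting vertex sets are again complete quivers glued at $k$. The cocycle relation is what makes these cancellations land exactly, and checking it uniformly across the four cases is the only delicate part.
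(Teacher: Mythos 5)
Your computational plan---run the algorithm of Proposition \ref{prop_equiv_col_quiv_mut} case by case, locate the wrap-around $0\mapsto m$ at $u_a$ and $v_b$, and track which Step 2 arrows survive Step 3---is the right mechanism, and you have correctly identified the Step 3 cancellations as the only delicate point. The gap is in your very first move. The lemma as stated quantifies over \emph{all} $m$-coloured quivers whose subquiver $F_{Q,k}$ has the glued shape, where the arrows ``can have any colour'' subject only to the monotonicity of the $c_{k,u_i}$ and the $c_{k,v_i}$. Nothing in these hypotheses says that $Q$ is $\Psi(\Delta)$ for an $(m+2)$-angulation $\Delta$, so you are not entitled to ``record the picture behind the hypothesis'' and import the edge-count cocycle $c_{\gamma,\delta}+c_{\delta,\varepsilon}+c_{\varepsilon,\gamma}=m-1$: that identity is a property of quivers in the image of $\Psi$ (it is Proposition \ref{prop_colour_k_cycles} for three diagonals of one $(m+2)$-gon), not a consequence of shape plus monotonicity.

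The objection is not cosmetic, because the cancellation your argument hinges on genuinely requires that relation. Take $m=2$ and let $Q$ be the complete quiver on $\{k,u_1,u_2\}$ (so $a=2$, $b=0$) with $c_{k,u_1}=1$, $c_{k,u_2}=0$ and $c_{u_1,u_2}=1$; your cocycle would force $c_{u_1,u_2}=2$, so it fails here, while every stated hypothesis of the lemma holds. Mutating at $k$---whether by Definition \ref{defn_quiv_mut} or by the three-step algorithm---the pre-existing arrow $u_1\to u_2$ of colour $1$ and the Step 2 arrow $u_1\to u_2$ of colour $c_{u_1,k}=1$ have the \emph{same} colour, so Step 3 cancels nothing and $u_2$ stays attached to $u_1$ by a double arrow, contradicting the conclusion (every case with $c_{k,u_a}=0$ detaches $u_a$ from $u_1,\ldots,u_{a-1}$). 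So what your argument actually proves is the lemma with the cocycle relations (equivalently, $Q=\Psi(\Delta)$) added as a hypothesis. That restricted statement is in fact the only one the paper uses: in Proposition \ref{prop_comm_mutations} the lemma is applied to $F_{\Psi(\Delta),\gamma}$, whose colours $c^{(1)}_{i,j}$, $c^{(2)}_{i,j}$ are computed explicitly and do satisfy your cocycle. Your analysis thus exposes that the lemma as literally stated is under-hypothesized---a point the paper's own justification (a one-sentence appeal to a ``straightforward application'' of the definition, with no proof given) passes over. To repair your proposal, either add the cocycle relations to the hypotheses and say explicitly that you are proving that corrected statement, or derive them from the stated hypotheses, which the example above shows is impossible.
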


The following proposition is stated in \cite{BT1}, after Proposition 11.1. We give an explicit proof of this fact that relies on the action of $\mu_k$ on $m$-coloured quivers described in Lemma \ref{lem_local_mutations}.

\begin{prop}\label{prop_comm_mutations}
Let $\Delta$ be an $(m+2)$-angulation of $\Pi$, and consider an $m$-diagonal $\gamma\in\Delta$. Then
$$
\Psi (r_{\gamma}(\Delta))=\mu_{\gamma}(\Psi(\Delta)).
$$
\end{prop}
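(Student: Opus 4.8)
The plan is to show that both $r_\gamma$ and $\mu_\gamma$ act purely locally, and then to match their effects on the neighbourhood of $\gamma$ by direct computation, invoking Lemma \ref{lem_local_mutations} on the quiver side. First I would observe that $r_\gamma$ fixes every $m$-diagonal of $\Delta$ except $\gamma$, replacing it by $\sigma(\gamma)$ inside the $(2m+2)$-gon $P_\gamma$. Consequently, by Definition \ref{defn_ass_quiv_to_m_gul}, $\Psi(r_\gamma(\Delta))$ and $\Psi(\Delta)$ can differ only in arrows incident to $\gamma$ together with the two gons $P_\gamma^{(1)},P_\gamma^{(2)}$ that change; every arrow determined by a gon disjoint from $P_\gamma$ is untouched. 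Since $\mu_\gamma$ likewise fixes all arrows of $\Psi(\Delta)$ outside $F_{\Psi(\Delta),\gamma}$, it suffices to compare the two sides on this neighbourhood.

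Next I would identify the local structure. Writing the vertices of $P_\gamma$ as $a_0<\dots<a_{2m+1}$ (indices read mod $2m+2$) and $\gamma=(a_i,a_{i+m+1})$, the neighbours of $\gamma$ in $\Psi(\Delta)$ are exactly the $m$-diagonals among the sides of $P_\gamma^{(1)}=\{a_i,\dots,a_{i+m+1}\}$ (call them the $u$'s) and of $P_\gamma^{(2)}=\{a_{i+m+1},\dots,a_i\}$ (the $v$'s). The $u$'s are pairwise joined because they all bound $P_\gamma^{(1)}$, and similarly for the $v$'s, while a $u$ and a $v$ share no gon, since their external gons lie in the disjoint regions cut off beyond $P_\gamma$ on opposite sides of $\gamma$; hence $F_{\Psi(\Delta),\gamma}$ is exactly the glueing of two complete quivers at $\gamma$, as required by Lemma \ref{lem_local_mutations}. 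Reading off colours from Definition \ref{defn_ass_quiv_to_m_gul}, the arrow $\gamma\to u$ with $u=(a_{i+t},a_{i+t+1})$ has colour $m-t$, and the arrow $\gamma\to v$ with $v=(a_{i-s},a_{i-s-1})$ has colour $s$; these are distinct within each gon, which gives the strict orderings $c_{\gamma,u_a}<\dots<c_{\gamma,u_1}$ and $c_{\gamma,v_b}<\dots<c_{\gamma,v_1}$ demanded by the Lemma, with the colour-$0$ neighbours (when they exist) being the counterclockwise-adjacent sides of $\gamma$.

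Then I would compute $\Psi(r_\gamma(\Delta))$ around $\sigma(\gamma)=(a_{i-1},a_{i+m})$ directly. Its two gons are $Q^{(1)}=\{a_{i-1},\dots,a_{i+m}\}$ and $Q^{(2)}=\{a_{i+m},\dots,a_{i-1}\}$, obtained from $P_\gamma^{(1)},P_\gamma^{(2)}$ by discarding the side $(a_{i+m},a_{i+m+1})$ and acquiring $(a_{i-1},a_i)$, and vice versa. Reindexing by $i\mapsto i-1$ shows at once that every side surviving in $Q^{(1)}$ or $Q^{(2)}$ keeps its colour to the other neighbours but sees the colour of its arrow to $\sigma(\gamma)$ drop by one, matching $\widetilde c_{\gamma,u_i}=c_{\gamma,u_i}-1$ and $\widetilde c_{\gamma,v_i}=c_{\gamma,v_i}-1$. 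The side $(a_{i+m},a_{i+m+1})$ is a quiver vertex precisely when $c_{\gamma,u_a}=0$; in that case it no longer bounds $Q^{(1)}$ but instead becomes a side of $Q^{(2)}$, i.e. $u_a$ migrates to the $v$-block, and a short count in $Q^{(2)}$ gives its new colour to $\sigma(\gamma)$ as $m$, in agreement with $\widetilde c_{\gamma,u_a}=c_{\gamma,u_a}-1\equiv m\pmod{m+1}$, and its new colour from each $v_i$ as $m-s=c_{v_i,\gamma}$ (using skew-symmetry), exactly the red arrows and labels of case (b). The symmetric analysis of $(a_{i-1},a_i)$ produces case (c), doing both simultaneously produces case (d), and when neither adjacent side is an $m$-diagonal no vertex migrates and we land in case (a).

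Finally I would assemble these computations into the four cases of Lemma \ref{lem_local_mutations}, concluding that $\Psi(r_\gamma(\Delta))$ and $\mu_\gamma(\Psi(\Delta))$ agree on $F_{\Psi(\Delta),\gamma}$, and hence everywhere. The main obstacle is the colour bookkeeping for the migrating vertex: one must verify not only that the adjacent side changes which gon it bounds, but that the new colours of all arrows between it and the opposite block agree with those prescribed by the mutation rule. This is where the orientation convention in Definition \ref{defn_ass_quiv_to_m_gul} (counterclockwise from the source, clockwise to the target) together with skew-symmetry must be tracked carefully; the condition $c\neq m$ flagged in Remark \ref{rem_correct_defn} is exactly what makes the colour-$0$ case produce this migration rather than a spurious arrow.
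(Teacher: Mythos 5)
Your proposal is correct and follows essentially the same route as the paper: both reduce to the local picture around $\gamma$, verify that $F_{\Psi(\Delta),\gamma}$ is a glueing of two complete quivers with strictly ordered colours so that Lemma \ref{lem_local_mutations} applies, and then match the four cases of the rotation according to whether the sides adjacent to $\gamma$'s endpoints are $m$-diagonals (the ``migration'' cases). The only difference is presentational: you carry out the colour bookkeeping in explicit coordinates $a_0<\dots<a_{2m+1}$, whereas the paper verifies the same case analysis via the quantities $\ell^{(1)}_{a+1},\ell^{(2)}_{b+1}$ and pictures.
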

\begin{proof}
Let $P_{\gamma}$ be the regular $(2m+2)$-gon defined in Proposition \ref{prop_properties_m2_guls}. Let $P_{\gamma}^{(1)}$, $P_{\gamma}^{(2)}$ be the two $(m+2)$-gons induced by $\Delta$ having $\gamma$ as an edge, so that $P_{\gamma}=P_{\gamma}^{(1)}\cup P_{\gamma}^{(2)}$. 

Let $\delta_1^{(1)},\ldots, \delta_a^{(1)}, \delta_{a+1}^{(1)}=\gamma$ (resp. $\delta_1^{(2)},\ldots, \delta_b^{(2)}, \delta_{b+1}^{(2)}=\gamma$) be the edges of $P_{\gamma}^{(1)}$ (resp. of $P_{\gamma}^{(2)}$) that are $m$-diagonals of $\Delta$, ordered clockwise, for some $a,b\geq 0$. 

Let $\ell_i^{(1)}$ (resp. $\ell_i^{(2)}$) be the number of edges forming the segment of the boundary of $P_{\gamma}^{(1)}$ (resp. $P_{\gamma}^{(2)}$) which lies between $\delta_i^{(1)}$ and $\delta_{i-1}^{(1)}$ (resp. between $\delta_i^{(2)}$ and $\delta_{i-1}^{(2)}$), counterclockwise from $\delta_i^{(1)}$ (resp. $\delta_i^{(2)}$) for $i=1,\ldots, a+1$ (resp. $i=1,\ldots,b+1$). By convention, we set $\delta_{0}^{(1)}=\delta_{0}^{(2)}=\gamma$. See Figure \ref{fig_local_gulation} for a geometric description.

\begin{figure*}[h!]
\centering
\includegraphics[scale=0.5]{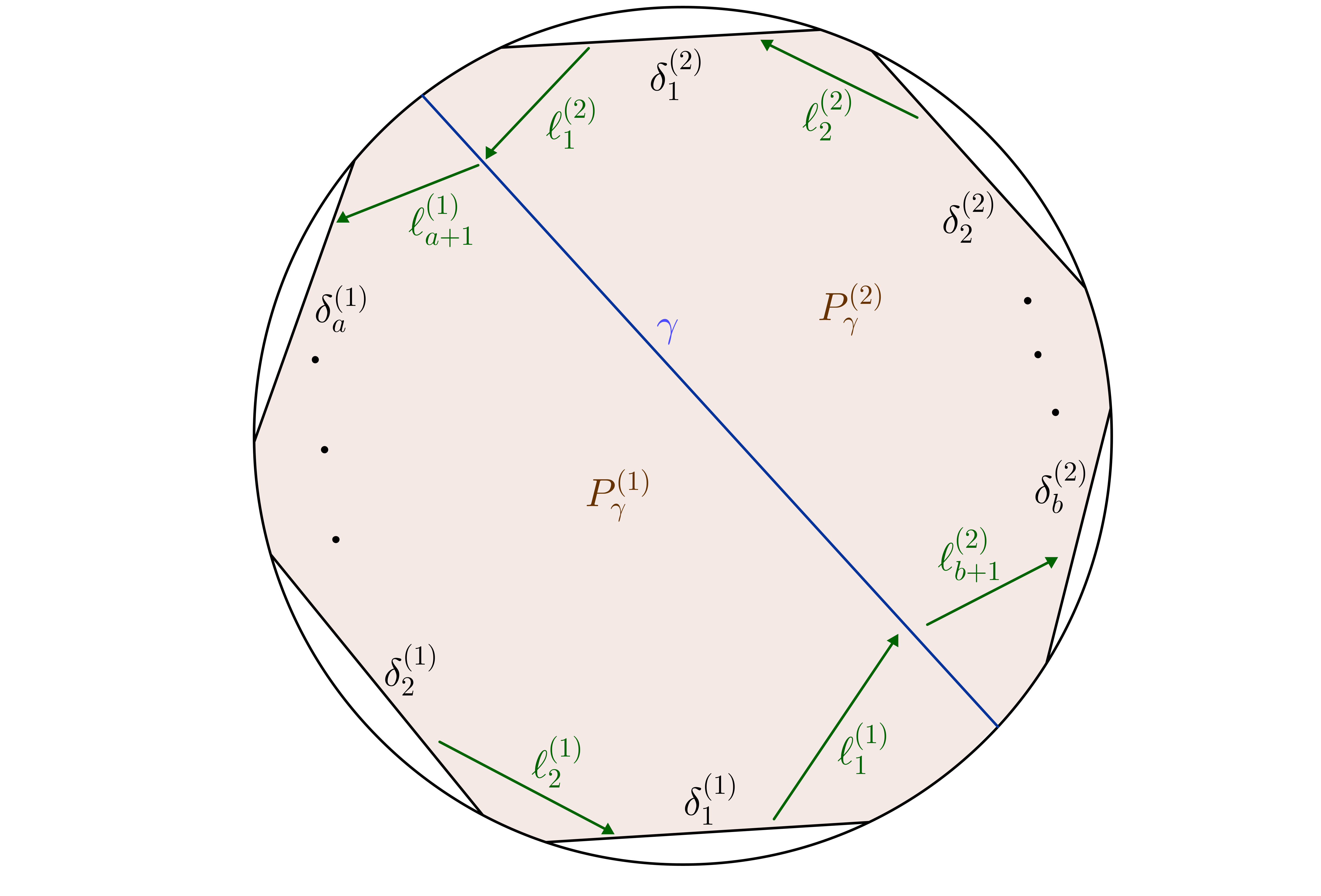}
\caption{Local $(m+2)$-angulation $\Delta$ around $\gamma$}
\label{fig_local_gulation}
\end{figure*}
Using Definition \ref{defn_ass_quiv_to_m_gul} we get that $F_{\Psi(\Delta),\gamma}$ is the following quiver,
$$
\xymatrixcolsep{0.5pc}\xymatrix{ & & \delta_a^{(1)}\ar@<0.3ex>[dll]\ar@<0.3ex>[ddl]\ar@<0.3ex>[ddr]\ar@<0.3ex>[drr] & & & \delta_1^{(2)}\ar@<0.3ex>[dl]\ar@<0.3ex>[rr]\ar@<0.3ex>[ddr]\ar@<0.3ex>[drrr] & & \delta_2^{(2)}\ar@<0.3ex>[ll]\ar@<0.3ex>[dlll]\ar@<0.3ex>[ddl]\ar@{.}[dr] & \\
\delta_{a-1}^{(1)}\ar@<0.3ex>[urr]\ar@<0.3ex>[drrr]\ar@<0.3ex>[rrrr]\ar@{.}[dr] & & & & \gamma\ar@<0.3ex>[ull]\ar@<0.3ex>[llll]\ar@<0.3ex>[dlll]\ar@<0.3ex>[dl]\ar@<0.3ex>[ur]\ar@<0.3ex>[urrr]\ar@<0.3ex>[drr]\ar@<0.3ex>[rrrr] & & & & \delta_{b-1}^{(2)}\ar@<0.3ex>[llll]\ar@<0.3ex>[ulll]\ar@<0.3ex>[dll] \\
& \delta_2^{(1)}\ar@<0.3ex>[uur]\ar@<0.3ex>[rr]\ar@<0.3ex>[urrr] & & \delta_1^{(1)}\ar@<0.3ex>[uul]\ar@<0.3ex>[ulll]\ar@<0.3ex>[ll]\ar@<0.3ex>[ur] & & & \delta_b^{(2)}\ar@<0.3ex>[uul]\ar@<0.3ex>[ull]\ar@<0.3ex>[uur]\ar@<0.3ex>[urr] & &
}
$$
where the colour of the arrows $\delta_i^{(1)}\to \delta_j^{(1)}$ is $c_{i,j}^{(1)}=\ell_i^{(1)} +\ell_{i-1}^{(1)}+\ldots +\ell_{j+1}^{(1)}+(i-j-1)$ for all $1\leq j<i\leq a+1$, and the colour of the arrows $\delta_i^{(2)}\to \delta_j^{(2)}$ is $c_{i,j}^{(2)}=\ell_i^{(2)} +\ell_{i-1}^{(2)}+\ldots + \ell_{j+1}^{(2)}+(i-j+1)$ for all $1\leq j<i\leq b+1$. 

Notice that 
$$ c_{a+1,a}^{(1)}<  c_{a+1,a-1}^{(1)}<\ldots < c_{a+1,1}^{(1)},\hspace{0.8cm} c_{b+1,b}^{(2)}<c_{b+1,b-1}^{(2)}<\ldots < c_{b+1,1}^{(2)}.
$$

Therefore we can apply Lemma \ref{lem_local_mutations}, that gives us all the possible mutations of $\Psi(\Delta)$ at vertex $\gamma$. In particular, $\mu_{\gamma}$ fixes the arrows (together with their colours) of $\Psi(\Delta)\setminus F_{\Psi(\Delta),\gamma}$, and it acts on $F_{\Psi(\Delta), \gamma}$ as explained in Lemma \ref{lem_local_mutations}. This computes $\mu_{\gamma}(\Psi(\Delta)).$

We now compute $r_{\gamma}(\Delta)$. First of all notice that $r_{\gamma}$ fixes all the $m$-diagonals of $\Delta$ apart from $\gamma$. Hence we may as well just represent how $r_{\gamma}$ acts on $\Delta$ locally around $\gamma$, that is, on $P_{\gamma}^{(1)}$ and $P_{\gamma}^{(2)}$. We have four possible cases, depending on the values of $\ell_{a+1}^{(1)}$ and $\ell_{b+1}^{(2)}$ (see Figure \ref{mut_poly_at_gamma} in the next page).

Now, applying the definition of $\Psi$ (see Definition \ref{defn_ass_quiv_to_m_gul}), one gets that the quiver $\Psi(r_{\gamma}(\Delta))$ associated to $r_{\gamma}(\Delta)$ is exactly the one given in Lemma \ref{lem_local_mutations}.

For example, if $\ell_{a+1}^{(1)}=0, \ell_{b+1}^{(2)}\neq 0$, one gets the commutative diagram in Figure \ref{commutative_diagram}. 

Hence we get the statement.
\end{proof}

\begin{figure*}[h!]
\centering
\begin{subfigure}[h]{0.54\textwidth}
\includegraphics[width=\textwidth]{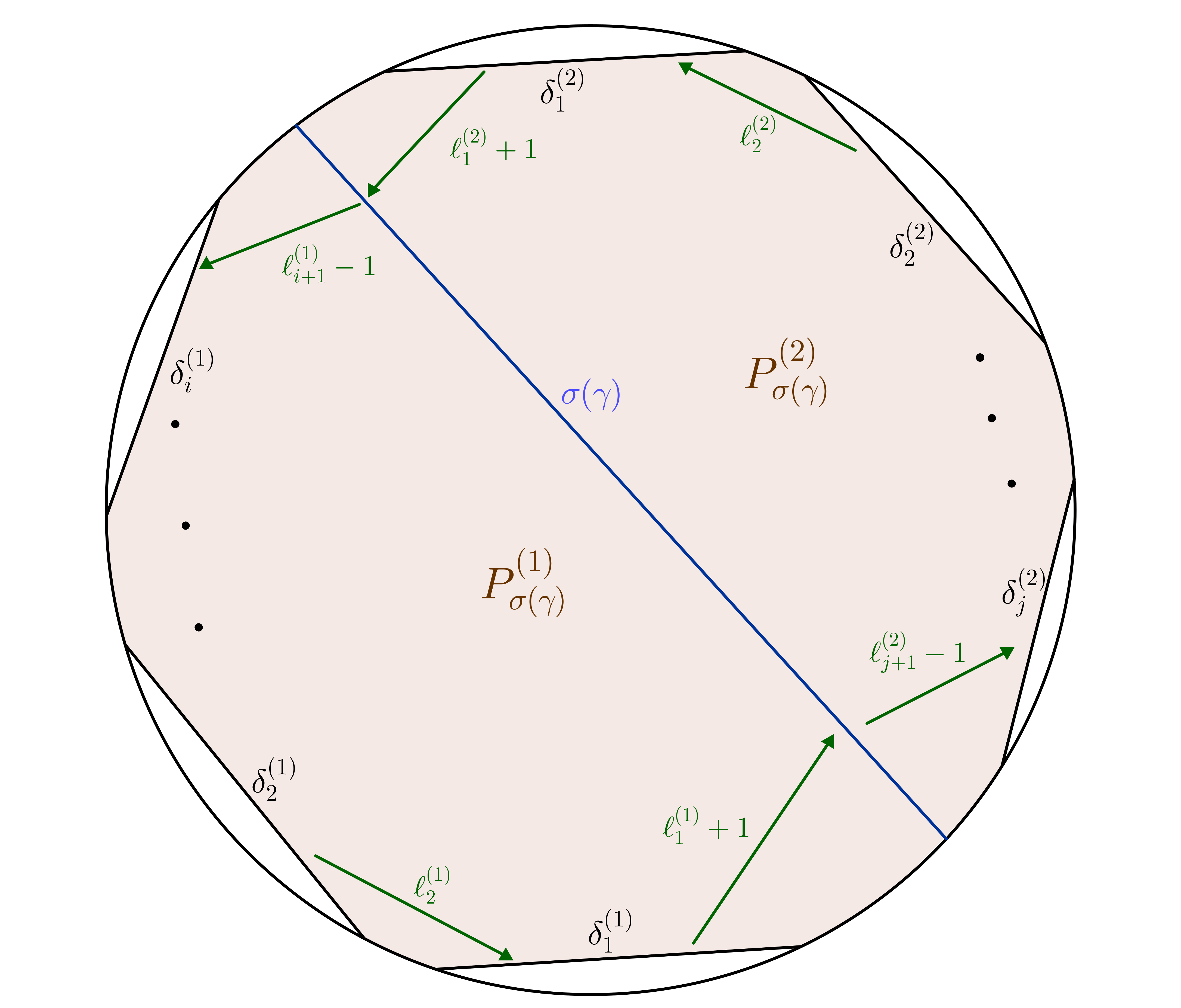}
\caption{$\ell_{a+1}^{(1)}\neq 0\neq \ell_{b+1}^{(2)}$}
\end{subfigure}\begin{subfigure}[h]{0.7\textwidth}\includegraphics[width=\textwidth]{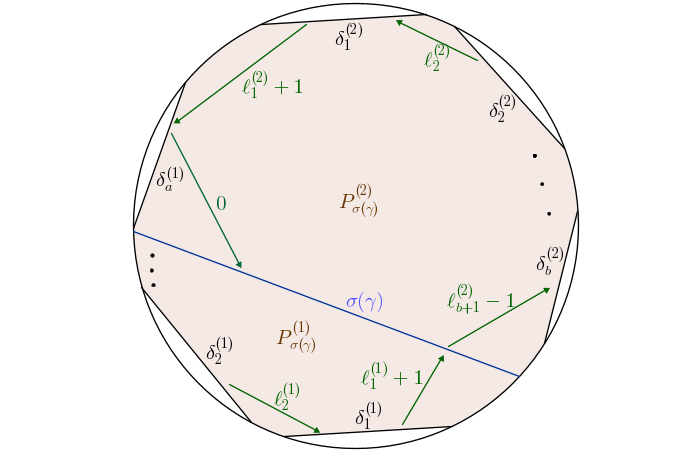}
\caption{$\ell_{a+1}^{(1)}= 0, \ell_{b+1}^{(2)}\neq 0$}
\end{subfigure}
\begin{subfigure}[h]{0.58\textwidth}
\includegraphics[width=\textwidth]{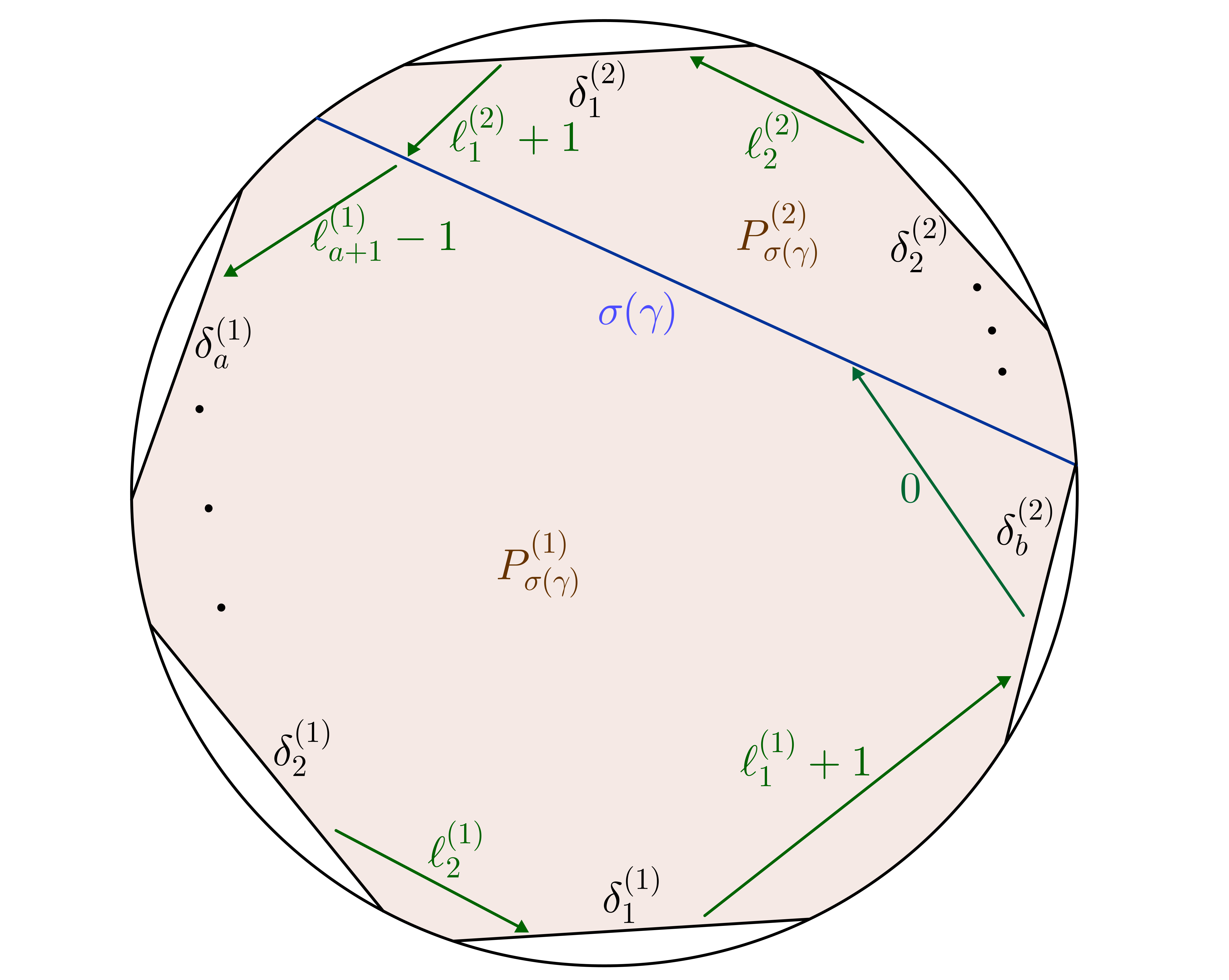}
\caption{$\ell_{a+1}^{(1)}\neq 0, \ell_{b+1}^{(2)}=0$}
\end{subfigure}\begin{subfigure}[h]{0.55\textwidth}
\includegraphics[width=\textwidth]{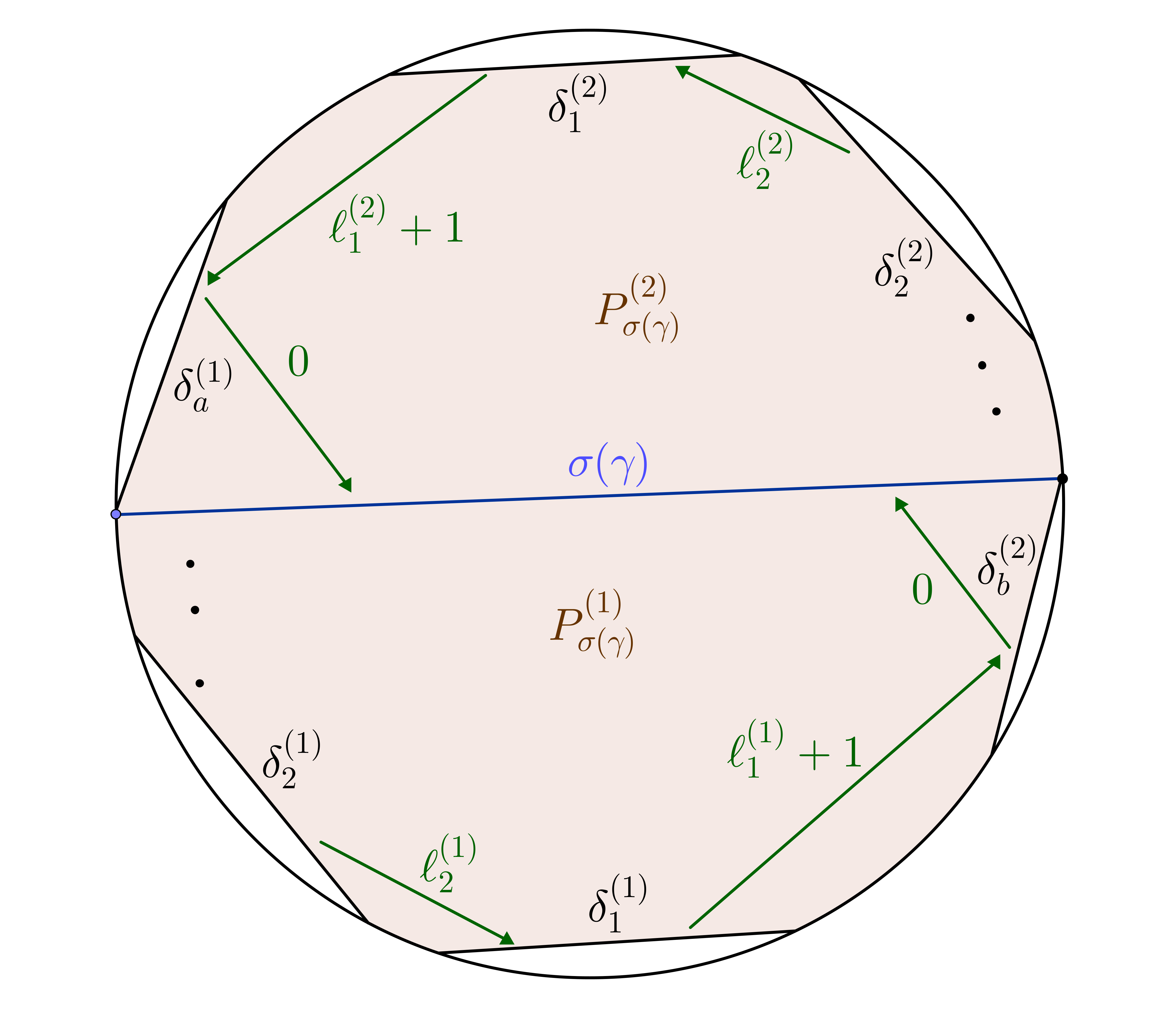}
\caption{$\ell_{a+1}^{(1)}= 0=\ell_{b+1}^{(2)}$}
\end{subfigure}
\caption{Mutations $r_{\gamma}(\Delta)$ at $\gamma$}
\label{mut_poly_at_gamma}
\end{figure*}

\begin{landscape}
\begin{figure*}[h!]
\includegraphics[scale=0.46]{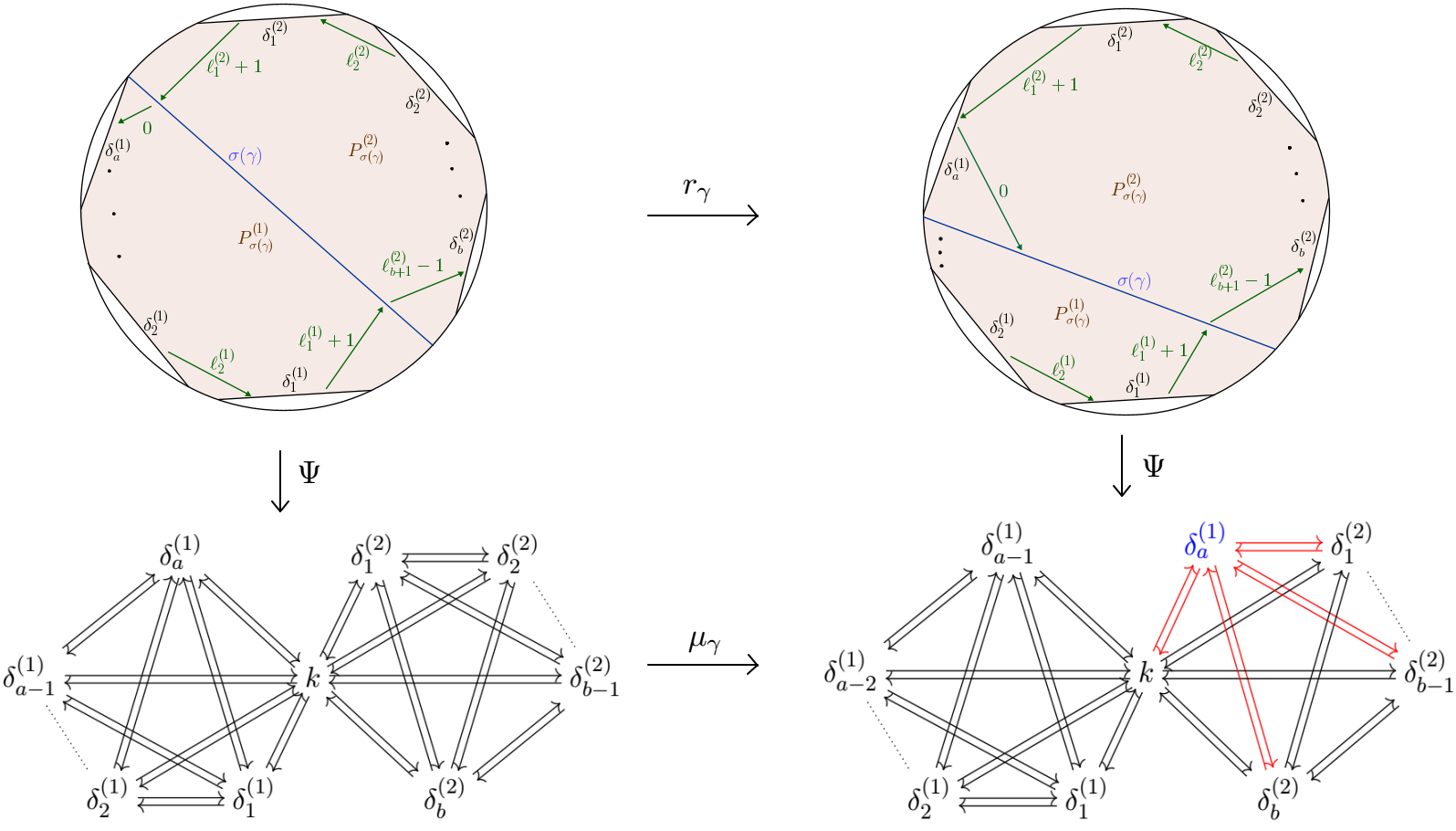}
\caption{Commutative diagram for $\ell_{i+1}^{(1)}=0$, $\ell_{j+1}^{(2)}\neq 0$}
\label{commutative_diagram}
\end{figure*}
\end{landscape}

\begin{rem}\label{rem_wrong_defn_quiv_mut}
The above Proposition is the second reason why we defined $m$-coloured quiver mutation in a slightly different way compared to \cite{BT1}. Consider the following example:
$$
Q=\xymatrix@1{1\ar@<0.3ex>[r]\ar@<-0.3pt>@{}[r]^{\scalemath{0.5}{(2)}} & 2\ar@<0.3ex>[l]\ar@<-0.3pt>@{}[l]^{\scalemath{0.5}{(0)}}\ar@<0.3ex>[r]\ar@<-0.3pt>@{}[r]^{\scalemath{0.5}{(0)}} & 3\ar@<0.3ex>[l]\ar@<-0.3pt>@{}[l]^{\scalemath{0.5}{(2)}}} \hspace{2cm} m=2.
$$
Then, if we mutate at vertex 2 using Proposition 10.1 in \cite{BT1}, we get that $\mu_2(Q)$ is the following quiver.
$$
\xymatrixcolsep{3pc}\xymatrix{& 2\ar@<0.5ex>[dl]\ar@<-0.3pt>@{}[dl]^{\scalemath{0.5}{(2)}}\ar@<0.5ex>[dr]\ar@<-0.3pt>@{}[dr]^{\scalemath{0.5}{(2)}} & \\
1\ar@<0.5ex>[rr]\ar@<-0.3pt>@{}[rr]^{\scalemath{0.5}{(2)}}\ar@<0.5ex>[ur]\ar@<-0.3pt>@{}[ur]^{\scalemath{0.5}{(0)}} & & 3\ar@<0.5ex>[ll]\ar@<-0.3pt>@{}[ll]^{\scalemath{0.5}{(0)}} \ar@<0.5ex>[ul]\ar@<-0.3pt>@{}[ul]^{\scalemath{0.5}{(0)}}}
$$
However, the diagram in Figure \ref{non_commutative_diagram} does not commute. 

\begin{figure*}[h]
\centering\includegraphics[scale=0.4]{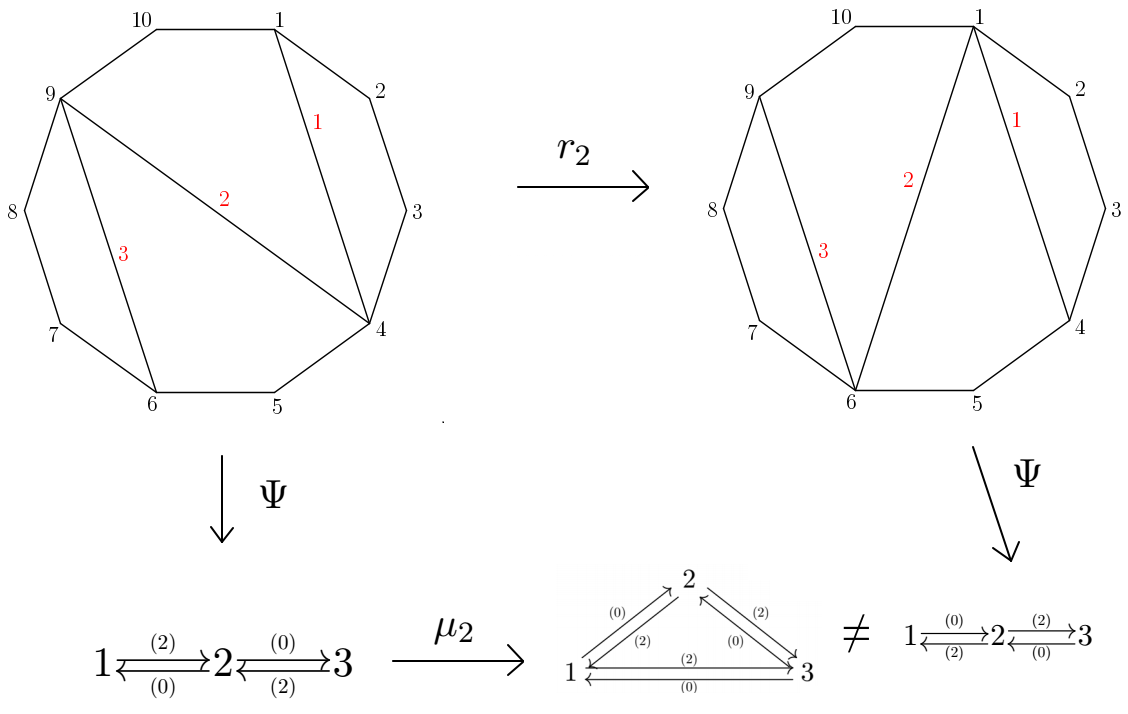}
\caption{Diagram obtained from Proposition 10.1 in \cite{BT1}}
\label{non_commutative_diagram}
\end{figure*} 
This suggests that the correct definition of $m$-coloured quiver mutation is the one in Proposition \ref{prop_equiv_col_quiv_mut}, that in this example correctly gives
$$
\mu_2(Q)=\xymatrix@1{1\ar@<0.3ex>[r]\ar@<-0.3pt>@{}[r]^{\scalemath{0.5}{(0)}} & 2\ar@<0.5ex>[l]\ar@<-0.3pt>@{}[l]^{\scalemath{0.5}{(2)}}\ar@<0.5ex>[r]\ar@<-0.3pt>@{}[r]^{\scalemath{0.5}{(2)}} & 3\ar@<0.5ex>[l]\ar@<-0.3pt>@{}[l]^{\scalemath{0.5}{(0)}}}.
$$
\end{rem}

Proposition \ref{prop_comm_mutations} is used in the following Theorem, whose proof can be found in Section 4 of \cite{T}.

\begin{thm}\label{thm_bij_col_quivs_m_guls}
Let $\mathcal{M}_{n-1,m}$ be the set of $m$-coloured quivers of mutation type $\overrightarrow{A_{n-1}}$, and $\mathcal{T}_{m,n}$ the set of $(m+2)$-angulations of $\Pi$. Then the map $\Psi$ introduced in Definition \ref{defn_ass_quiv_to_m_gul} induces a bijection
$$
\Psi:(\mathcal{T}_{m,n}/\sim )\to \mathcal{M}_{n-1,m}/\mathfrak{S}_{n-1}, 
$$
where $\Delta\sim \Delta '$ if $\Delta '$ can be obtained rotating $\Delta$, and the symmetric group $\mathfrak{S}_{n-1}$ on $n-1$ letters acts on an $m$-coloured quiver by permuting its vertices. 
\end{thm}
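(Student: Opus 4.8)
The plan is to construct the induced map $\bar\Psi$, verify it is well defined into the target, then establish surjectivity (which follows quickly from the machinery already built) and finally injectivity, which is the real content.

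\textbf{Well-definedness.} First I would check that the image lands in $\mathcal{M}_{n-1,m}$. Given an $(m+2)$-angulation $\Delta$, Lemma \ref{lem_rotation_An_1} writes $\Delta=r_{\gamma_1}\cdots r_{\gamma_{\ell}}(\widehat{A}_{n-1})$; applying $\Psi$ and using Proposition \ref{prop_comm_mutations} repeatedly together with Remark \ref{rem_connection_betweenAn} gives $\Psi(\Delta)=\mu_{\gamma_1}\cdots\mu_{\gamma_{\ell}}(\overrightarrow{A_{n-1}})$, once the $m$-diagonals of $\widehat{A}_{n-1}$ are identified with the vertex set $\{1,\ldots,n-1\}$. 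Hence $\Psi(\Delta)$ is of mutation type $\overrightarrow{A_{n-1}}$, this identification being canonical only up to a permutation in $\mathfrak{S}_{n-1}$, which is exactly why we pass to the quotient on the right. Next I would observe that a rotation $\rho$ of $\Pi$ sends $m$-diagonals to $m$-diagonals and preserves non-crossing, hence maps angulations to angulations, and that it is a symmetry of Definition \ref{defn_ass_quiv_to_m_gul}: it preserves cell-adjacency of diagonals and the boundary edge-counts defining the colours. Therefore $\Psi(\rho\Delta)$ is $\Psi(\Delta)$ with its vertices relabelled, i.e.\ the two lie in the same $\mathfrak{S}_{n-1}$-orbit, so $\bar\Psi$ descends to the quotients.

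\textbf{Surjectivity.} Given $Q=\mu_{i_1}\cdots\mu_{i_{\ell}}(\overrightarrow{A_{n-1}})\in\mathcal{M}_{n-1,m}$, I would lift the mutation word to a word in the rotations $r_{\gamma}$: starting from $\widehat{A}_{n-1}$, whose quiver is $\overrightarrow{A_{n-1}}$ by Remark \ref{rem_connection_betweenAn}, and using the identification of vertices with $m$-diagonals, replace each $\mu_{i_j}$ by the rotation at the $m$-diagonal currently carrying the label $i_j$. Proposition \ref{prop_comm_mutations} guarantees that the resulting angulation $\Delta$ satisfies $\Psi(\Delta)=Q$ up to relabelling, so $\bar\Psi$ is onto.

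\textbf{Injectivity (the main obstacle).} Here I must show that if $\Psi(\Delta)$ and $\Psi(\Delta')$ agree up to a permutation of vertices, then $\Delta$ and $\Delta'$ are related by a rotation of $\Pi$. The approach I would take is to reconstruct the angulation from its coloured quiver up to rotation. The dual graph of $\Delta$, with the $(m+2)$-cells as nodes and the shared $m$-diagonals as edges, is a tree on $n$ nodes and $n-1$ edges; those edges are exactly the vertices of $\Psi(\Delta)$, and two diagonals are joined by arrows precisely when they bound a common cell, so the quiver determines the incidences, while the colours record how many boundary edges lie in each gap (equivalently the sizes of the pieces cut off). From this data each cell, together with the way cells are glued along the tree, is determined, so the whole dissection is pinned down once the position of a single edge is fixed, the only remaining freedom being the global rotation --- giving injectivity on $\mathcal{T}_{m,n}/\!\sim$. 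The delicate points, and where I expect the real work to lie, are verifying that the colour data genuinely recovers the cyclic order and the gap sizes inside each $(m+2)$-cell, so that no two inequivalent dissections can share a quiver, and that the reconstruction is globally consistent across the tree. A less constructive alternative would be a counting argument: show that $|\mathcal{T}_{m,n}/\!\sim|$ equals $|\mathcal{M}_{n-1,m}/\mathfrak{S}_{n-1}|$, both being governed by the Fuss--Catalan number that counts $(m+2)$-angulations, after which injectivity follows formally from surjectivity and finiteness; the obstacle there is the Burnside-type bookkeeping needed to handle the two quotients correctly.
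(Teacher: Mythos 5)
Your proposal cannot be matched against an internal argument, because the paper does not prove this theorem itself: it is quoted from Section 4 of \cite{T}, and the paper's contribution is to supply ingredients that such a proof needs, namely Lemma \ref{lem_rotation_An_1} (transitivity of the rotations $r_{\gamma}$ on $(m+2)$-angulations) and Proposition \ref{prop_comm_mutations} (the intertwining $\Psi(r_{\gamma}(\Delta))=\mu_{\gamma}(\Psi(\Delta))$). Your well-definedness and surjectivity arguments assemble exactly these ingredients, together with Remark \ref{rem_connection_betweenAn}, in the intended way, and they are correct: every $\Delta$ is a rotation word applied to $\widehat{A}_{n-1}$, so $\Psi(\Delta)$ is the corresponding mutation word applied to $\overrightarrow{A_{n-1}}$ (up to the labelling of vertices by $\{1,\ldots,n-1\}$, which is why the quotient by $\mathfrak{S}_{n-1}$ is needed); conversely every mutation word lifts to a rotation word, which gives surjectivity.

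The genuine gap is injectivity, which you have correctly located as the real content but have not closed. What you offer is a reconstruction plan --- cliques of the quiver should be the $(m+2)$-cells, arrows the shared diagonals, colours the boundary gaps, and the glued tree of cells should pin down the dissection up to rotation --- while explicitly deferring what you call the delicate points. Those points are the whole proof: (i) one must show that a clique in $\Psi(\Delta)$ really comes from a single cell, which needs the acyclicity of the dual tree (three diagonals pairwise bounding three \emph{distinct} common cells would force a $3$-cycle in the dual graph, so either such a configuration is impossible or all three bound one cell); (ii) one must show the colours recover the cyclic order of the diagonals around each cell and the gap sizes, which is where Definition \ref{defn_ass_quiv_to_m_gul} and Proposition \ref{prop_colour_k_cycles} enter, and where orientation must be controlled, since a reflection of $\Pi$ replaces each colour $c$ by $m-c$ and so must not be absorbed into the claimed uniqueness; and (iii) one must show the abstract glued object embeds in $\Pi$ uniquely up to rotation. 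None of these is carried out, so as written the proposal is a correct strategy with the decisive step missing. The alternative you mention, comparing cardinalities via Fuss--Catalan numbers, has the same status: computing $|\mathcal{T}_{m,n}/\!\sim|$ and $|\mathcal{M}_{n-1,m}/\mathfrak{S}_{n-1}|$ requires Burnside-type bookkeeping for angulations with rotational symmetry, which is again precisely the hard part. A complete write-up should either work out (i)--(iii) or do as the paper does and invoke Section 4 of \cite{T}.
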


\begin{cor}\label{cor_order_col_quiv_mut_type_A}
Let $Q$ be an $m$-coloured quiver of mutation type $\overrightarrow{A_{n-1}}$ and $k\in Q_0$. Then $\mu_k$, the mutation of $Q$ at vertex $k$, has order $m+1$.
\end{cor}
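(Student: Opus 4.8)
The plan is to transport the statement across the bijection of Theorem~\ref{thm_bij_col_quivs_m_guls} and reduce it to the order of the geometric rotation $r_{\gamma}$, which is already known to be $m+1$ by Proposition~\ref{rem_order_rotation}. First I would realise $Q$ geometrically: since $Q$ is of mutation type $\overrightarrow{A_{n-1}}$, Lemma~\ref{lem_rotation_An_1} together with Remark~\ref{rem_connection_betweenAn} (equivalently Theorem~\ref{thm_bij_col_quivs_m_guls}) produces an $(m+2)$-angulation $\Delta$ of $\Pi$ with $\Psi(\Delta)=Q$, and I let $\gamma\in\Delta$ be the $m$-diagonal that is the vertex $k$ of $Q$ (recall the vertices of $\Psi(\Delta)$ are, by Definition~\ref{defn_ass_quiv_to_m_gul}, the $m$-diagonals of $\Delta$). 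Proposition~\ref{prop_comm_mutations} then reads $\Psi(r_{\gamma}(\Delta))=\mu_{k}(\Psi(\Delta))$, where in the target the vertex named $\sigma(\gamma)$ is identified with the vertex $k=\gamma$, exactly as $\mu_k$ keeps track of the label $k$ while $r_\gamma$ keeps track of the rotated diagonal. Iterating this identity gives $\mu_k^{\,j}(Q)=\Psi(r_{\gamma}^{\,j}(\Delta))$ for every $j\geq 0$.

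The upper bound is then immediate: Proposition~\ref{rem_order_rotation} gives $r_{\gamma}^{\,m+1}(\Delta)=\Delta$, so $\mu_k^{\,m+1}(Q)=\Psi(\Delta)=Q$ and the order of $\mu_k$ divides $m+1$. The real content of the corollary is the reverse assertion, $\mu_k^{\,j}(Q)\neq Q$ for $1\leq j\leq m$, and this is where the main obstacle lies. By the previous paragraph it is equivalent to $\Psi(r_{\gamma}^{\,j}(\Delta))\neq\Psi(\Delta)$. Since $r_{\gamma}$ only alters $\Delta$ inside the $(2m+2)$-gon $P_{\gamma}$ and fixes every other $m$-diagonal, the two quivers can differ only in the arrows incident to $k$; so the task reduces to showing that the local subquiver $F_{Q,k}$ is genuinely moved by each of the first $m$ mutations.

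To settle this I would fix one neighbour and track a single colour. Assume $k$ has at least one neighbour $\delta\in\mathcal{N}_{Q,k}$; geometrically $\delta$ is an $m$-diagonal lying on the boundary of $P_{\gamma}$. As $\gamma$ rotates to $\sigma^{\,j}(\gamma)$ the chord $\delta$ remains a boundary edge of $P_{\gamma}$, hence stays adjacent to the rotated diagonal, and the colour of the arrow between $k$ and $\delta$ records the number of boundary edges separating $\sigma^{\,j}(\gamma)$ from $\delta$ inside the $(m+2)$-gon they cobound. One application of $r_\gamma$ shifts the rotating chord one step counterclockwise past the fixed edge $\delta$, so this colour decreases by one modulo $m+1$; consequently it runs through all of $\{0,\ldots,m\}$ exactly once as $j=0,\ldots,m$, and in particular takes pairwise distinct values. (This is precisely the colour bookkeeping encoded algebraically by Lemma~\ref{lem_local_mutations}, which can be used instead if one prefers a purely quiver-theoretic count.) Hence $F_{\Psi(r_{\gamma}^{\,j}(\Delta)),k}\neq F_{Q,k}$ and $\mu_k^{\,j}(Q)\neq Q$ for $1\leq j\leq m$, giving order exactly $m+1$. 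The one point needing separate care is the possibility that $k$ has no neighbour at all: this forces every boundary edge of $P_{\gamma}$ to be a side of $\Pi$, i.e.\ $P_{\gamma}=\Pi$ and $n=2$, in which case $Q$ is a single vertex and $\mu_k$ acts trivially; so the exactness holds precisely when $\gamma$ interacts with another diagonal, which is automatic for $n\geq 3$.
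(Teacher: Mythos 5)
Your proposal is correct, and its skeleton is the same as the paper's: transport the problem through $\Psi$ to the polygon and invoke Proposition~\ref{rem_order_rotation}. But the paper's entire proof is the one-line citation of Proposition~\ref{rem_order_rotation} and Theorem~\ref{thm_bij_col_quivs_m_guls}, and you correctly identify that this citation, taken literally, only yields half the statement. Iterating Proposition~\ref{prop_comm_mutations} gives $\mu_k^j(Q)=\Psi(r_\gamma^j(\Delta))$ and hence the upper bound $\mu_k^{m+1}(Q)=Q$; however, Theorem~\ref{thm_bij_col_quivs_m_guls} is a bijection only on equivalence classes (angulations up to rotation, quivers up to vertex permutation), and $\Psi$ is not injective on actual angulations, so $r_\gamma^j(\Delta)\neq\Delta$ for $1\leq j\leq m$ does not by itself give $\mu_k^j(Q)\neq Q$. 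Your colour-tracking argument supplies exactly this missing lower bound: a neighbour $\delta\in\mathcal{N}_{Q,k}$ stays a neighbour under mutation at $k$, and the colour of the arrow $k\to\delta$ drops by one modulo $m+1$ at each step (this is the relation $\widetilde{c}_{k,u_i}=c_{k,u_i}-1$ holding in all four cases of Lemma~\ref{lem_local_mutations}), so it takes $m+1$ pairwise distinct values and the quivers $\mu_k^j(Q)$, $0\leq j\leq m$, are pairwise distinct. You also catch a degenerate case the paper silently ignores: for $n=2$ the quiver is a single vertex, $\mu_k$ is the identity, and the corollary as stated fails even though $r_\gamma$ still has order $m+1$ --- which is precisely the class-versus-representative discrepancy that makes the lower bound non-automatic. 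Two cosmetic quibbles: Lemma~\ref{lem_rotation_An_1} is not the right reference for realising $Q$ as $\Psi(\Delta)$ (that surjectivity comes from Remark~\ref{rem_connection_betweenAn} together with Proposition~\ref{prop_comm_mutations}, or from Theorem~\ref{thm_bij_col_quivs_m_guls} itself), and your claim that the mutated quiver ``can differ only in the arrows incident to $k$'' is imprecise, since arrows between neighbours of $k$ are also created or removed (cases b)--d) of Lemma~\ref{lem_local_mutations}); neither affects your argument, which only needs one changed arrow incident to $k$.
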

\begin{proof}
This follows directly from Proposition \ref{rem_order_rotation} and Theorem \ref{thm_bij_col_quivs_m_guls}.
\end{proof}

\begin{rem}
Corollary \ref{cor_order_col_quiv_mut_type_A} holds for arbitrary quivers in the case $m=1$. Equivalently, for all (uncoloured) quivers $Q$ and $k\in Q_0$, then $\mu_k$ has order $m+1=2$.

This is not in general true for $m>1$. 

For example, let $m=2$ and consider the following 2-coloured quiver $Q$, and its repeated mutations at vertex $k=2$ obtained applying repeatidly Proposition \ref{prop_equiv_col_quiv_mut}.
$$
\vspace{0.3cm}
\begin{array}{cccccccc}
Q & \to & \mu_2(Q) & \to & \mu_2^2(Q) & \to & \mu_2^3(Q) \\ 
\xymatrixcolsep{1.5pc}\xymatrix{ & 2\ar@<0.4ex>[dr]\ar@<-0.5pt>@{}[dr]^{\scalemath{0.5}{(0)}}\ar@<0.4ex>[dl]\ar@{}[dl]^{\scalemath{0.5}{(2)}} & \\
1\ar@<0.4ex>[rr]\ar@<-0.5pt>@{}[rr]^{\scalemath{0.5}{(0)}}\ar@<0.4ex>[ur]\ar@<-0.5pt>@{}[ur]^{\scalemath{0.5}{(0)}} & & 3\ar@<0.4ex>[ll]\ar@<-0.5pt>@{}[ll]^{\scalemath{0.5}{(2)}}\ar@<0.4ex>[ul]\ar@{}[ul]^{\scalemath{0.5}{(2)}}}
 & \raisebox{-17pt}{$\mapsto$}  & \raisebox{-15pt}{\xymatrixcolsep{1.5pc}\xymatrix@1{1 \ar@<0.4ex>[r]\ar@<-0.5pt>@{}[r]^{\scalemath{0.5}{(1)}} & 2 \ar@<0.4ex>[l]\ar@<-0.5pt>@{}[l]^{\scalemath{0.5}{(1)}} \ar@<0.4ex>[r]\ar@<-0.5pt>@{}[r]^{\scalemath{0.5}{(2)}} & 3\ar@<0.4ex>[l]\ar@<-0.5pt>@{}[l]^{\scalemath{0.5}{(0)}}}} & \raisebox{-17pt}{$\mapsto$} & \raisebox{-15pt}{\xymatrixcolsep{1.5pc}\xymatrix@1{1 \ar@<0.4ex>[r]\ar@<-0.5pt>@{}[r]^{\scalemath{0.5}{(2)}} & 2 \ar@<0.4ex>[l]\ar@<-0.5pt>@{}[l]^{\scalemath{0.5}{(0)}} \ar@<0.4ex>[r]\ar@<-0.5pt>@{}[r]^{\scalemath{0.5}{(1)}} & 3\ar@<0.4ex>[l]\ar@<-0.5pt>@{}[l]^{\scalemath{0.5}{(1)}}}} & \raisebox{-17pt}{$\mapsto$} & \xymatrixcolsep{1.5pc}\xymatrix{ & 2\ar@<0.4ex>[dr]\ar@<-0.5pt>@{}[dr]^{\scalemath{0.5}{(0)}}\ar@<0.4ex>[dl]\ar@{}[dl]^{\scalemath{0.5}{(2)}} & \\
1\ar@<0.4ex>[rr]\ar@<-0.5pt>@{}[rr]^{\scalemath{0.5}{(1)}}\ar@<0.4ex>[ur]\ar@<-0.5pt>@{}[ur]^{\scalemath{0.5}{(0)}} & & 3\ar@<0.4ex>[ll]\ar@<-0.5pt>@{}[ll]^{\scalemath{0.5}{(1)}}\ar@<0.4ex>[ul]\ar@{}[ul]^{\scalemath{0.5}{(2)}}} 
\end{array}
$$
Hence we have $\mu_2^3(Q)\neq Q$.

Therefore, for an arbitrary $m$-coloured quiver $Q$ and $k\in Q_0$ we have $\mu_k^{m+1}(Q)\neq Q$, and the condition that $Q$ is of mutation type $\overrightarrow{A_{n-1}}$ is crucial.
\end{rem}

\begin{cor}
Let $Q$ be an $m$-coloured quiver of mutation type $\overrightarrow{A_{n-1}}$, and let $i,j\in Q_0$. Then there is at most one arrow $\xymatrix@1{i\ar[r] & j}$. 
\end{cor}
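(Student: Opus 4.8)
The plan is to reduce the statement to a purely combinatorial fact about $(m+2)$-angulations and then read it off from the definition of $\Psi$. Since $Q$ is of mutation type $\overrightarrow{A_{n-1}}$, I would write $Q=\mu_{i_1}\cdots\mu_{i_\ell}(\overrightarrow{A_{n-1}})$. By Remark \ref{rem_connection_betweenAn} we have $\overrightarrow{A_{n-1}}=\Psi(\widehat{A}_{n-1})$, and by Proposition \ref{prop_comm_mutations} each mutation of the quiver at a vertex corresponds, under $\Psi$, to a rotation $r_\gamma$ of the underlying angulation at the $m$-diagonal that is that vertex. Iterating, $Q=\Psi(\Delta)$ for the $(m+2)$-angulation $\Delta=r_{\gamma_{i_1}}\cdots r_{\gamma_{i_\ell}}(\widehat{A}_{n-1})$ (alternatively one may invoke the bijection of Theorem \ref{thm_bij_col_quivs_m_guls}, noting that relabelling vertices does not change the number of arrows between a given pair). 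Since the number of arrows from one vertex to another is unchanged by such an identification, it suffices to prove the claim for $\Psi(\Delta)$, where $\Delta$ is an arbitrary $(m+2)$-angulation. The case $n=1$ is vacuous, as then $Q$ has no vertices, so I would assume $n\geq 2$.

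Next I would unwind Definition \ref{defn_ass_quiv_to_m_gul}. The vertices of $\Psi(\Delta)$ are the $m$-diagonals of $\Delta$, and an arrow from $\gamma$ to $\delta$ is produced once for each $(m+2)$-gon of $\Delta$ having both $\gamma$ and $\delta$ among its edges, its colour being the counterclockwise boundary distance within that $(m+2)$-gon. (Within a single such $(m+2)$-gon each of $\gamma,\delta$ occurs as exactly one edge, so that gon contributes exactly one arrow $\gamma\to\delta$.) Hence the number of arrows $\gamma\to\delta$ equals the number of $(m+2)$-gons of $\Delta$ having both $\gamma$ and $\delta$ as edges, and the corollary becomes the assertion that two distinct $m$-diagonals can be common edges of at most one $(m+2)$-gon of $\Delta$.

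To prove this I would invoke Proposition \ref{prop_properties_m2_guls}(3): each $m$-diagonal $\gamma$ is an edge of exactly two $(m+2)$-gons $P_\gamma^{(1)},P_\gamma^{(2)}$, whose union is the $(2m+2)$-gon $P_\gamma$. Any $(m+2)$-gon having both $\gamma$ and $\delta$ as edges must therefore lie in $\{P_\gamma^{(1)},P_\gamma^{(2)}\}\cap\{P_\delta^{(1)},P_\delta^{(2)}\}$, so there are at most two. To exclude the possibility of two, suppose $\gamma$ and $\delta$ were both edges of the same pair $P,P'$ of $(m+2)$-gons. Then $\{P,P'\}=\{P_\gamma^{(1)},P_\gamma^{(2)}\}$, so $P$ and $P'$ are exactly the two pieces into which the diagonal $\gamma$ cuts $P_\gamma$; consequently $P$ and $P'$ meet only along $\gamma$, i.e. $P\cap P'=\gamma$ as subsets of the plane. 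Since $\delta$ is an edge of both $P$ and $P'$, this forces $\delta\subseteq P\cap P'=\gamma$, hence $\delta=\gamma$, contradicting $\gamma\neq\delta$.

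The main obstacle is the final geometric step: justifying $P_\gamma^{(1)}\cap P_\gamma^{(2)}=\gamma$, that is, that the two $(m+2)$-gons glued along $\gamma$ share no further edge. This is intuitively clear because $P_\gamma^{(1)}$ and $P_\gamma^{(2)}$ lie on opposite sides of the chord $\gamma$ inside the convex polygon $P_\gamma$, but I would phrase it carefully in terms of the non-crossing condition on $m$-diagonals from Definition \ref{defn_m_gulation}, to match the level of rigour of the surrounding section. Everything else is direct bookkeeping from Definition \ref{defn_ass_quiv_to_m_gul} and Proposition \ref{prop_properties_m2_guls}.
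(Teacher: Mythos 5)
Your proof is correct and takes essentially the same route as the paper: both reduce the statement, via Theorem \ref{thm_bij_col_quivs_m_guls}, to the geometric claim that two distinct $m$-diagonals of an $(m+2)$-angulation are common edges of at most one $(m+2)$-gon of that angulation. The paper dismisses this last claim as ``trivially true,'' whereas you actually verify it (via Proposition \ref{prop_properties_m2_guls}(3) and the fact that the two $(m+2)$-gons adjacent to $\gamma$ meet only along $\gamma$), which is a sound filling-in of the same argument.
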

\begin{proof}
Thanks to Theorem \ref{thm_bij_col_quivs_m_guls} this is equivalent to saying that, for each pair of $m$-diagonals $\gamma , \gamma '$ in an $(m+2)$-angulation of $\Pi$, there is at most one $(m+2)$-gon inside $\Pi$ having both $\gamma$ and $\gamma '$ as edges, that is trivially true.
\end{proof}

Let $\Delta$ be an $(m+2)$-angulation of $\Pi$, and $\gamma_1,\ldots,\gamma_a\in \Delta$ be $m$-diagonals ordered clockwise that are edges of an $(m+2)$-gon $P$ inside $\Pi$ determined by $\Delta$, for some $i\geq 2$. Let $\ell^k$ be the the number of edges of $P$ lying between $\gamma_{k+1}$ and $\gamma_{k}$, $k=1,\ldots, a$, where we set $\gamma_{a+1}=\gamma_1$. 

Let $Q$ be an $m$-coloured quiver associated to $\Delta$ by $\Psi$, and let $\ell_k$ be the colour of the arrow $\gamma_k\to \gamma_{k+1}$, for $k=1,\ldots, a$.

Notice that, by the skew symmetry property of $m$-coloured quivers, we have
$$
\ell_k=m-\ell^k,
$$
for all $k=1,\ldots, a$.

\begin{prop}\label{prop_colour_k_cycles}
With the same notation as above, the complete quiver on vertices $\gamma_1,\ldots, \gamma_a$ is a subquiver of $Q$. 

Furthermore, the following formulas hold.
$$
\ell_1+\ldots +\ell_a=(a-1)(m+1)-1, \hspace{0.7cm} \ell^1+\ldots +\ell^a =m-a+2
$$
\end{prop}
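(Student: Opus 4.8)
The plan is to split the statement into its two independent parts—the completeness of the subquiver on $\gamma_1,\ldots,\gamma_a$, and the two numerical identities—and to observe that each reduces to an elementary edge count on the single $(m+2)$-gon $P$.

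First, for the completeness claim I would appeal directly to Definition \ref{defn_ass_quiv_to_m_gul}: by construction $\Psi(\Delta)$ has an arrow between any two $m$-diagonals of $\Delta$ that are edges of a common $(m+2)$-gon of the angulation. Since $\gamma_1,\ldots,\gamma_a$ are by hypothesis all edges of the \emph{same} $(m+2)$-gon $P$, every unordered pair $\{\gamma_i,\gamma_j\}$ lies on $P$, so $\Psi(\Delta)$ contains an arrow $\gamma_i\to\gamma_j$ and, by the skew-symmetry property (III), also $\gamma_j\to\gamma_i$. Combined with the earlier corollary guaranteeing at most one arrow between two vertices of a quiver of mutation type $\overrightarrow{A_{n-1}}$ (which applies since $Q=\Psi(\Delta)$ is of this type), this yields exactly one pair of arrows for each pair of vertices, i.e. the full subquiver on $\gamma_1,\ldots,\gamma_a$ is complete.

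Next, for the identity $\ell^1+\cdots+\ell^a=m-a+2$, I would argue by counting the edges of $P$. The polygon $P$ has exactly $m+2$ edges, and each such edge is either one of the $a$ $m$-diagonals $\gamma_1,\ldots,\gamma_a$ or a boundary segment of $\Pi$. Traversing $\partial P$ clockwise, the boundary segments are grouped into the $a$ arcs lying between consecutive diagonals $\gamma_k$ and $\gamma_{k+1}$ (indices read modulo $a$, with $\gamma_{a+1}=\gamma_1$), and $\ell^k$ counts precisely the segments in the $k$-th such arc. Hence $\sum_{k=1}^a\ell^k$ counts every non-diagonal edge of $P$ exactly once, giving $\sum_{k=1}^a\ell^k=(m+2)-a$, which is the asserted value.

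Finally, for $\ell_1+\cdots+\ell_a=(a-1)(m+1)-1$, I would substitute the relation $\ell_k=m-\ell^k$ recorded just above the statement and sum:
\[
\sum_{k=1}^a \ell_k=\sum_{k=1}^a(m-\ell^k)=am-\sum_{k=1}^a\ell^k=am-(m-a+2)=(a-1)(m+1)-1.
\]
No genuine obstacle arises; the only point demanding care is the bookkeeping in the previous step—checking that the arcs between consecutive $m$-diagonals partition the non-diagonal edges of $P$ without overlap or omission (in particular that the wrap-around arc between $\gamma_a$ and $\gamma_1$ is counted exactly once, and that arcs with $\ell^k=0$, where two diagonals share a vertex of $P$, are permitted). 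Once that partition is established, both identities follow immediately.
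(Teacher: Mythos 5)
Your proposal is correct and follows essentially the same route as the paper: both obtain the complete subquiver directly from Definition \ref{defn_ass_quiv_to_m_gul} together with skew-symmetry, derive $\ell^1+\cdots+\ell^a+a=m+2$ by counting the $m+2$ edges of $P$ as the $a$ diagonals plus the boundary arcs between consecutive diagonals, and then deduce the first identity by substituting $\ell_k=m-\ell^k$ and summing. The only cosmetic difference is that the paper presents the edge count via a picture of $P$, while you spell out the partition argument (including the wrap-around arc and the case $\ell^k=0$) in words.
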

\begin{proof} 
We can represent the $(m+2)$-gon $\Pi$ and the $m$-diagonals $\gamma_1,\ldots, \gamma_a\in\Delta$ as follows.
\begin{center}
\includegraphics[scale=0.38]{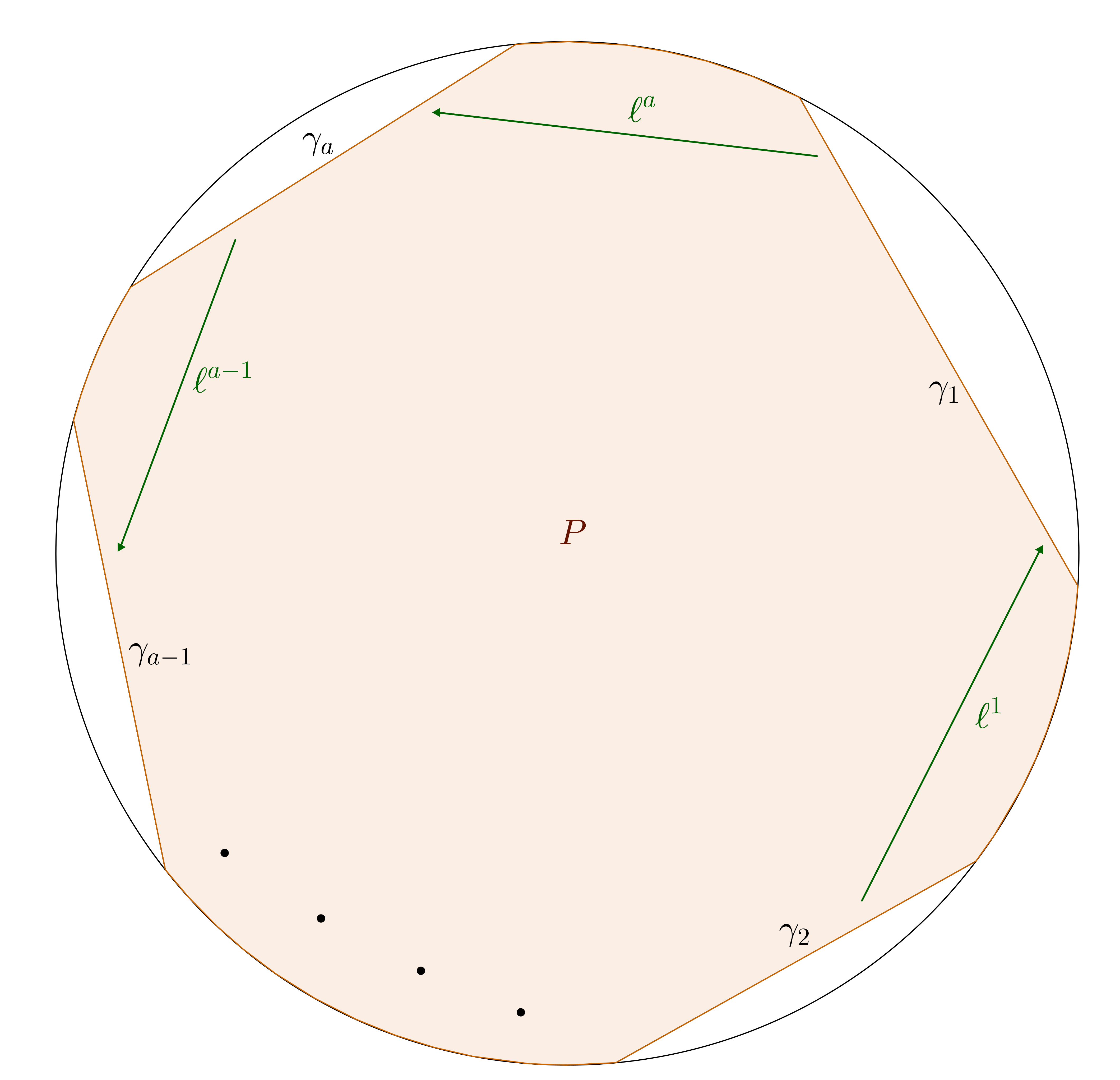}
\end{center}
Using the definition of $\Psi$, we can see that the subquiver of $Q$ associated to $\gamma_1,\ldots, \gamma_a$ by $\Psi$ is the following complete quiver on vertices $\gamma_1,\ldots, \gamma_a$,
$$
\xymatrixcolsep{0.4pc}\xymatrix{ & & \gamma_a\ar@<0.3ex>[dll]\ar@<0.3ex>[ddl]\ar@<0.3ex>[ddr]\ar@<0.3ex>[drr] & & \\
\gamma_{a-1}\ar@<0.3ex>[urr]\ar@<0.3ex>[drrr]\ar@<0.3ex>[rrrr]\ar@{.}[dr] & & & & \gamma_1\ar@<0.3ex>[ull]\ar@<0.3ex>[llll]\ar@<0.3ex>[dlll]\ar@<0.3ex>[dl] \\
& \gamma_3\ar@<0.3ex>[uur]\ar@<0.3ex>[rr]\ar@<0.3ex>[urrr] & & \gamma_2\ar@<0.3ex>[uul]\ar@<0.3ex>[ulll]\ar@<0.3ex>[ll]\ar@<0.3ex>[ur] & 
}
$$
with arrows 
$$
\xymatrix@1{\gamma_k\ar@<0.3ex>[r]^{(\ell_k)} & \gamma_{k+1}\ar@<0.3ex>[l]^{(\ell^k)}}, \hspace{1cm} k=1,\ldots, a. 
$$
Now, since $P$ is an $(m+2)$-gon, we have $\ell^1+\ldots +\ell^a+a=m+2$, that is, 
$$
\ell^1+\ldots+\ell^a=m-a+2.
$$
Also, since $\ell_k=m-\ell^k$ for all $k=1,\ldots, a$, we get:
\begin{align*}
\ell_1+\ldots +\ell_a= & (m-\ell^1)+\ldots +(m-\ell^a) \\ = & am-(\ell^1+\ldots +\ell^a)=am-(m-a+2) \\ = & (a-1)(m+1)-1.
\end{align*}
\end{proof}

\begin{ese}\label{ese_12gon_colour_of_arrows}
Let $m=2$, $n=4$, and consider the following 12-angulation of a regular dodecagon $\Pi$.
\begin{center}
\begin{figure*}[h!]
\includegraphics[scale=0.5]{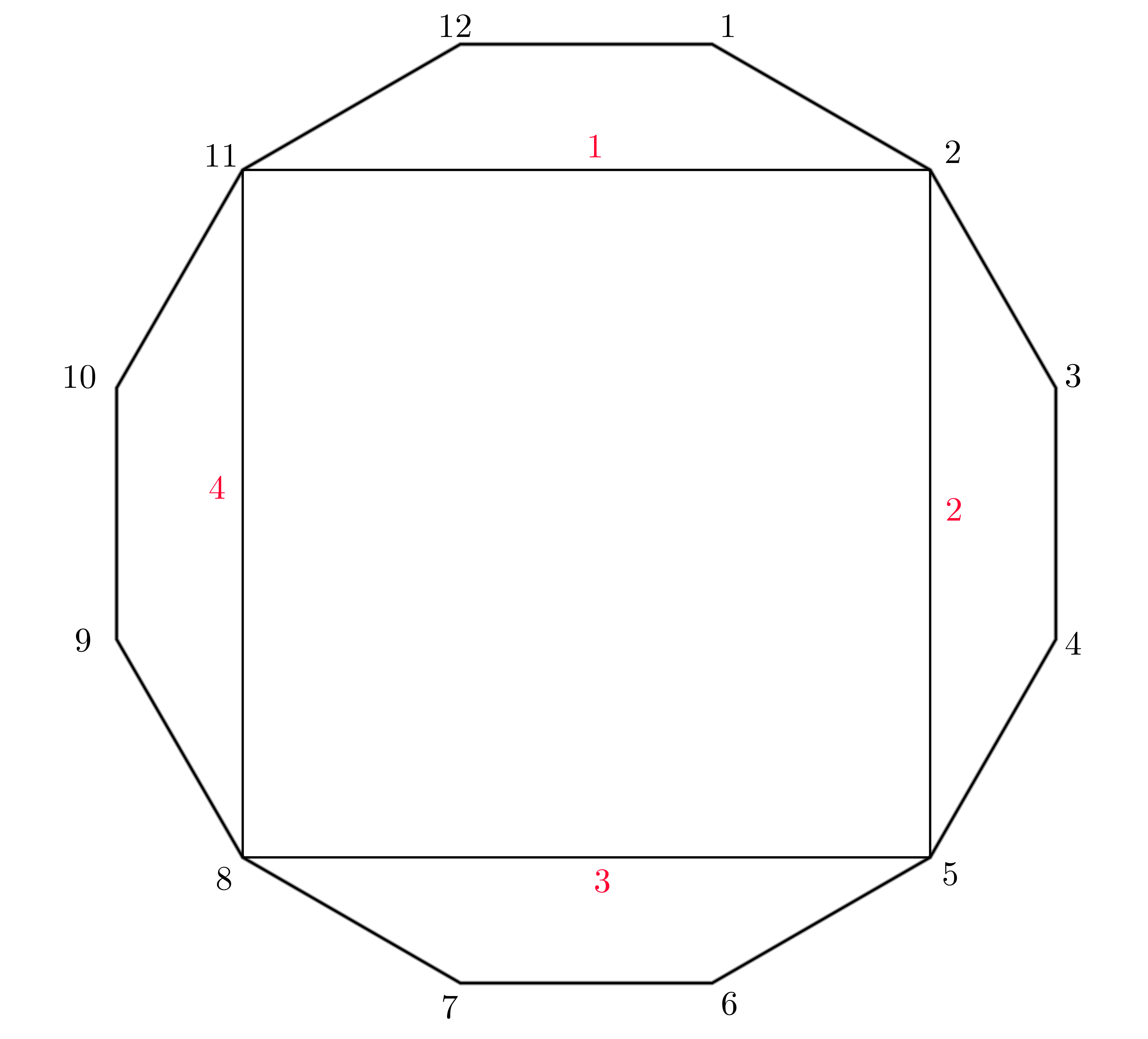}
\end{figure*}
\end{center}
The associated 2-coloured quiver $Q$ is given by
$$
\xymatrix{ & 1\ar@<0.3ex>[dr] \ar@<-3pt>@{}[dr]^{\scalemath{0.5}{(2)}}\ar@<0.3ex>[dl] \ar@<-3pt>@{}[dl]^{\scalemath{0.5}{(0)}}\ar@<0.3ex>[dd] \ar@<-1.7pt>@{}[dd]^<<<<<<<{\scalemath{0.5}{(1)}}|(.5){\hole} & \\
4\ar@<0.3ex>[rr] \ar@<-1.7pt>@{}[rr]^<<<<<<<{\scalemath{0.5}{(1)}}\ar@<0.3ex>[dr] \ar@<-3pt>@{}[dr]^{\scalemath{0.5}{(0)}}\ar@<0.3ex>[ur] \ar@<-3pt>@{}[ur]^{\scalemath{0.5}{(2)}} & & 2\ar@<0.3ex>[ll] \ar@<-1.4pt>@{}[ll]^<<<<<<<{\scalemath{0.5}{(1)}}\ar@<0.3ex>[ul] \ar@<-3pt>@{}[ul]^{\scalemath{0.5}{(0)}}\ar@<0.3ex>[dl] \ar@<-3pt>@{}[dl]^{\scalemath{0.5}{(2)}} \\
& 3\ar@<0.3ex>[uu] \ar@<-1.7pt>@{}[uu]^<<<<<<<{\scalemath{0.5}{(1)}}|(.5){\hole}\ar@<0.3ex>[ur] \ar@<-3pt>@{}[ur]^{\scalemath{0.5}{(2)}}\ar@<0.3ex>[ul] \ar@<-3pt>@{}[ul]^{\scalemath{0.5}{(0)}} &}
$$
Let $c_{i,j}$ be the colour of the arrow $i\to j$ in $Q$. Then the two formulas from Proposition \ref{prop_colour_k_cycles} hold. 

For example, if we consider the vertices 1,2,3,4, we have
$$
c_{1,4}+c_{4,3}+c_{3,2}+c_{2,1}=0+0+0+0=0=2-4+2
$$
$$
c_{1,2}+c_{2,3}+c_{3,4}+c_{4,1}=2+2+2+2=9=3\cdot 3 -1.
$$ 
Also, if we consider vertices 1,2,4, we have
$$
c_{1,4}+c_{4,2}+c_{2,1}=0+1+0=1=2-3+2
$$
$$
c_{1,2}+c_{2,4}+c_{4,1}=2+1+2=5=2\cdot 3-1.
$$
\end{ese}

We can now define in a natural way a group arising from an $m$-coloured quiver of mutation type $\overrightarrow{A_{n-1}}$.

\section{Presentations of braid groups of type $\mathbb{A}_{n-1}$}

\begin{defn}\label{defn_group_ass_to_quiver}
Let $Q$ be an $m$-coloured quiver of mutation type $\overrightarrow{A_{n-1}}$, with vertices $1,\ldots, n-1$. For an arrow $i\to j$, let $c_{ij}$ be its colour. Define the group $B_Q$ to be generated by $s_1,\ldots, s_{n-1}$ subject to the relations:
\begin{enumerate}
\item $s_i s_j=s_js_i$ if there is no arrow between $i$ and $j$ (in either direction);
\item $s_i s_j s_i=s_js_is_j$ if there is a pair of arrows $\xymatrix@1{i\ar@<0.3ex>[r] & j\ar@<0.3ex>[l]}$;
\item $s_{i_1}s_{i_2}s_{i_3}s_{i_1}=s_{i_2}s_{i_3}s_{i_1}s_{i_2}=s_{i_3}s_{i_1}s_{i_2}s_{i_3}$ if 
$$
\xymatrixcolsep{1pc}\xymatrix{ & i_1\ar@<0.3ex>[dl]\ar@<0.3ex>[dr] & \\
i_2 \ar@<0.3ex>[rr]\ar@<0.3ex>[ur] & & i_3 \ar@<0.3ex>[ll]\ar@<0.3ex>[ul]}
$$
is a subquiver of $Q$ and $c_{i_1, i_2}+c_{i_2, i_3}+c_{i_3, i_1}=2m+1$.
\end{enumerate}  
\end{defn}

\begin{rem}
Let $Q$ be an $m$-coloured quiver of mutation type $\overrightarrow{A_{n-1}}$. By Theorem \ref{thm_bij_col_quivs_m_guls} we can find an $(m+2)$-angulation $\Delta$ of $\Pi$ such that $\Psi(\Delta)=Q$. Therefore any subquiver of $Q$ of the form 
$$
\xymatrixcolsep{1pc}\xymatrix{ & i_1\ar@<0.3ex>[dl]\ar@<0.3ex>[dr] & \\
i_2 \ar@<0.3ex>[rr]\ar@<0.3ex>[ur] & & i_3 \ar@<0.3ex>[ll]\ar@<0.3ex>[ul]}
$$
corresponds to three $m$-diagonals $i_1,i_2,i_3\in\Delta$ that, by Definition of $\Psi$, are edges of the same $(m+2)$-gon identified by $\Delta$ in $\Pi$. Thus, Proposition \ref{prop_colour_k_cycles} applied in the case $i=3$ tells us that exactly one of the following holds true.
\begin{itemize}
\item The sum of the colours of the arrows going clockwise is $2m+1$;
\item The sum of the colours of the arrows going counterclockwise is $2m+1$.
\end{itemize}
Therefore any complete full subquiver of $Q$ on three vertices gives a relation in $B_Q$ of type (3) in Definition \ref{defn_group_ass_to_quiver}.
\end{rem}

\begin{rem}\label{rem_braid_group}
Consider the $m$-coloured quiver $\overrightarrow{A_{n-1}}$.
$$
\xymatrix@1{1\ar@<0.3ex>[r]^{(0)} & 2\ar@<0.3ex>[l]^{(m)}\ar@<0.3ex>[r]^{(0)} & \cdots\ar@<0.3ex>[l]^{(m)} \ar@<0.3ex>[r]^{(0)}& (n-2)\ar@<0.3ex>[l]^{(m)}\ar@<0.3ex>[r]^{(0)} & (n-1)\ar@<0.3ex>[l]^{(m)}}.
$$
Then its associated group $B_{\overrightarrow{A_{n-1}}}$ is the braid group of type $A_{n-1}$, given in terms of its standard presentation. More precisely, it is generated by $s_1,\ldots, s_{n-1}$ subject to the relations

\[
\begin{cases}
s_i s_j=s_j s_i,  & \text{ if } |i-j|>1  \\
s_i s_{i+1}s_i= s_{i+1} s_i s_{i+1} & 
\end{cases}
\]
\end{rem}

\begin{ese}
Let $m=2$, and consider the following $2$-coloured quiver $Q$ of mutation type $\overrightarrow{A_{4}}$.
$$
\xymatrixcolsep{3pc}\xymatrix{ & 2\ar@<0.3ex>[dl]\ar@<-0.3pt>@{}[dl]^{\scalemath{0.5}{(2)}} \ar@<0.3ex>[dr]\ar@<-0.3pt>@{}[dr]^{\scalemath{0.5}{(0)}} & & \\
1\ar@<0.3ex>[ur]\ar@<-0.3pt>@{}[ur]^{\scalemath{0.5}{(0)}} \ar@<0.3ex>[rr]\ar@<-0.3pt>@{}[rr]^{\scalemath{0.5}{(1)}} & & 3\ar@<0.3ex>[ll]\ar@<-0.3pt>@{}[ll]^{\scalemath{0.5}{(1)}} \ar@<0.3ex>[ul]\ar@<-0.3pt>@{}[ul]^{\scalemath{0.5}{(2)}} \ar@<0.3ex>[r]\ar@<-0.3pt>@{}[r]^{\scalemath{0.5}{(0)}} & 4\ar@<0.3ex>[l]\ar@<-0.3pt>@{}[l]^{\scalemath{0.5}{(2)}}}
$$ 
Then the group $B_Q$ associated to $Q$ is generated by $s_1, s_2, s_3, s_4$ subject to the following relations:
\begin{align*}
s_1 s_4= & s_4s_1 \\
s_2 s_4= & s_4 s_2 \\
s_1 s_2 s_1 = & s_2 s_1 s_2  \\
s_1 s_3 s_1 = & s_3 s_1 s_3 \\
s_2 s_3 s_2 = & s_3 s_2 s_3 \\
s_3 s_4 s_3 = & s_4 s_3 s_4 \\
s_1 s_3 s_2 s_1 = & s_3 s_2 s_1 s_3 = s_2 s_1 s_3 s_2. 
\end{align*}
Notice that the quiver $Q$ and the associated group $B_Q$ can also be obtained in the uncoloured case (see \cite{GM}). In Example \ref{ese_relation_following} we will give an example of a 2-coloured quiver $Q$ of mutation type $\overrightarrow{A_4}$ that does not arise from the uncoloured theory.
\end{ese}

Let $Q$ be an $m$-coloured quiver of mutation type $\overrightarrow{A_{n-1}}$, $k\in Q_0$. Let $Q'=\mu_k (Q)$ be its coloured quiver mutation at $k$. Let $\{s_i |i\in\ Q_0\}$ (resp. $\{t_i |i\in Q'_0\}$) be the generators of $B_Q$ (resp. $B_{Q'}$) introduced in Definition \ref{defn_group_ass_to_quiver}. Let $F_Q$ be the free group with generators $s_i$, $i\in Q_0$.

\begin{defn}
Let $\varphi_k : F_Q \to B_{Q'}$ be the group homomorphism defined by
$$
\varphi_k (s_i)=\begin{cases} t_k t_i t_k^{-1}, & \text{ if } \xymatrix@1{k\ar[r]^{(0)} & i} \\
t_i, & \text{ otherwise}
\end{cases}.
$$
\end{defn}

\begin{prop}
The group homomorphism defined above induces a group homomorphism $\varphi_{k}:B_Q\to B_{Q'}$.
\end{prop}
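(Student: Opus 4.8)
The plan is to show that $\varphi_k$ carries each defining relation of $B_Q$ to a relation that holds in $B_{Q'}$. Since $B_Q$ is the quotient of $F_Q$ by the normal closure of the set of relators $(1)$--$(3)$ of Definition \ref{defn_group_ass_to_quiver}, this is exactly what is needed for $\varphi_k$ to factor through $B_Q$. Throughout I would write $S=\{i\in Q_0 : \text{there is an arrow } k\xrightarrow{(0)} i\}$ for the set of generators that $\varphi_k$ conjugates by $t_k$. By monochromaticity and the strict ordering of colours $c_{k,u_a}<\dots<c_{k,u_1}$ and $c_{k,v_b}<\dots<c_{k,v_1}$ in Lemma \ref{lem_local_mutations}, colour $0$ is attained at most once on each of the two complete subquivers glued at $k$, so $S$ consists of at most the two neighbours $u_a,v_b$; for every $i\in S$ one has $\varphi_k(s_i)=t_kt_it_k^{-1}$, and $\varphi_k(s_j)=t_j$ otherwise (in particular $\varphi_k(s_k)=t_k$, as $k$ has no loop).

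First I would dispose of every relation whose vertices avoid $\{k\}\cup\mathcal N_{Q,k}$. On such vertices $\varphi_k$ is the identity, and by Lemma \ref{lem_local_mutations} the mutation $\mu_k$ fixes every arrow of $Q_1\setminus (F_{Q,k})_1$ together with its colour; hence the corresponding relation of $B_Q$ is literally a relation of $B_{Q'}$ and is preserved. This reduces the problem to the finitely many relations supported on $F_{Q,k}$, which I would then check by running through the four local cases a)--d) of Lemma \ref{lem_local_mutations} and, within each, through the relation types $(1)$--$(3)$, computing $\varphi_k$ of both sides and verifying equality in $B_{Q'}$.

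The recurring mechanism is the following. Whenever $i\in S$ the arrow $k\xrightarrow{(0)}i$ has its colour lowered to $m$ by Step 1 of Proposition \ref{prop_equiv_col_quiv_mut}, so $k$ and $i$ still form a $2$-cycle in $Q'$ and the braid relation $t_kt_it_k=t_it_kt_i$ is available in $B_{Q'}$; this lets me absorb the conjugation by $t_k$ introduced by $\varphi_k$. For instance, the braid relation $s_ks_is_k=s_is_ks_i$ with $i\in S$ becomes, after substitution, the identity $t_k^2t_i=t_kt_it_kt_it_k^{-1}$, which is equivalent to $t_kt_it_k=t_it_kt_i$ and therefore holds. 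The colour-sum identities of Proposition \ref{prop_colour_k_cycles}, applied both before and after mutation, are what certify which triples of neighbours satisfy the condition $c_{i_1,i_2}+c_{i_2,i_3}+c_{i_3,i_1}=2m+1$, and hence which $3$-cycle relations $(3)$ are present in $Q$ and in $Q'$.

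The main obstacle will be the type-$(3)$ relations combined with the reconnection of neighbours in cases b)--d). There the new (red) arrows of Lemma \ref{lem_local_mutations} can turn a commuting pair of $Q$ into a $2$-cycle of $Q'$, or create or destroy a $3$-cycle and shift its colour sum; verifying that the $\varphi_k$-image of the old relation is a consequence of the relations of $B_{Q'}$ requires combining the conjugation identity above with the $4$-term relations of $B_{Q'}$ and the colour bookkeeping of Proposition \ref{prop_colour_k_cycles}. Once these finitely many local configurations are checked, the induced homomorphism $\varphi_k:B_Q\to B_{Q'}$ is well defined.
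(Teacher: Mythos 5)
Your strategy coincides with the paper's: reduce to the relations affected by $\varphi_k$, observe that only the colour-$0$ neighbours $u_a,v_b$ of $k$ get conjugated, and case-split along a)--d) of Lemma \ref{lem_local_mutations}. However, there are two genuine gaps. The first is in your reduction. Having disposed of relations whose vertices avoid $\{k\}\cup\mathcal{N}_{Q,k}$, what remains is every relation meeting $\{k\}\cup\mathcal{N}_{Q,k}$, and these are \emph{not} all supported on $F_{Q,k}$: the vertex $u_a$ is an edge of a second $(m+2)$-gon $P_{u_a}$ (Proposition \ref{prop_properties_m2_guls}), so $B_Q$ contains braid relations $s_{u_a}s_{w_i}s_{u_a}=s_{w_i}s_{u_a}s_{w_i}$ and four-term relations $s_{u_a}s_{w_i}s_{w_j}s_{u_a}=\cdots$ in which $w_i,w_j\notin F_{Q,k}$ (and similarly $v_b$ with the diagonals $z_i$ of $P_{v_b}$). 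Since $\varphi_k(s_{u_a})=t_kt_{u_a}t_k^{-1}$ is not the identity on these relations, they do not take care of themselves; they are precisely cases (2) and (4) of the paper's proof, verified there using the commutations $t_kt_{w_i}=t_{w_i}t_k$. Your reduction silently drops them.

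The second gap is that the verifications are never actually carried out. You identify the correct mechanism (absorbing the conjugation via the braid relation of the 2-cycle between $k$ and $u_a$, which survives mutation with colour $m$) and you check one easy relation, $s_ks_is_k=s_is_ks_i$. But for everything else --- the commutations $s_{u_a}s_{v_i}=s_{v_i}s_{u_a}$, the braid relations with the $u_i$ and the $w_i$, and all the four-term relations --- you only assert that they can be checked by ``combining the conjugation identity with the $4$-term relations of $B_{Q'}$ and the colour bookkeeping.'' Those computations are the substance of the proof (for instance, deriving $\tilde{s}_{u_a}\tilde{s}_{u_i}\tilde{s}_{u_j}\tilde{s}_{u_a}=\tilde{s}_{u_i}\tilde{s}_{u_j}\tilde{s}_{u_a}\tilde{s}_{u_i}$ from $t_kt_{u_i}t_{u_j}t_k=t_{u_i}t_{u_j}t_kt_{u_i}$, $t_kt_{u_a}t_k=t_{u_a}t_kt_{u_a}$ and commutations), and stating that they remain to be done does not do them. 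Moreover, both the hypotheses of Lemma \ref{lem_local_mutations} (that $F_{Q,k}$ is a glueing of two complete quivers with strictly ordered colours) and the precise lists of which relations hold in $B_Q$ and which are available in $B_{Q'}$ are not automatic for an abstract coloured quiver: the paper obtains them by invoking Theorem \ref{thm_bij_col_quivs_m_guls} to realize $Q=\Psi(\Delta)$ and reading the local structure off the polygon around the diagonal $k$; your proposal uses this structure without establishing it.
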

\begin{proof}
We need to show that all the relations in $B_Q$ are preserved by $\varphi_k$. 

By Theorem \ref{thm_bij_col_quivs_m_guls} we can find an $(m+2)$-angulation $\Delta$ of $\Pi$ such that $\Psi(\Delta)=Q$. 

Consider the $m$-diagonal $k\in\Delta$. By Proposition \ref{prop_properties_m2_guls}(3), there are exactly two $(m+2)$-gons $P_{k}^{(1)},P_k^{(2)}$ in $\Pi$ identified by $\Delta$ that have $k$ as an edge. Let $u_0, u_1,\ldots, u_a$ (resp. $v_0, v_1,\ldots, v_b$) be the $m$-diagonals of $\Delta$ that are edges of $P_{k}^{(1)}$ (resp. of $P_k^{(2)}$), ordered clockwise, for some $a,b\geq 0$, where we set $u_0=v_0=k$. By Proposition \ref{prop_properties_m2_guls}(3), we also know that, if $a>0$ (resp. $b>0$), then $u_a$ (resp. $v_b$) will be edge of another $(m+2)$-gon $P_{u_a}$ (resp. $P_{v_b}$) in $\Pi$ identified by $\Delta$. Let $u_a,w_1,\ldots, w_{\ell}$ (resp. $v_b, z_1, \ldots, z_{h}$) be the $m$-diagonals of $\Delta$ that are edges of $P_{u_a}$ (resp. $P_{v_b}$), ordered clockwise, for some $\ell, h\geq 0$. 

Applying the definition of $\Psi$, we get that the union of the subquivers $F_{Q,k}, F_{Q,u_a}$ and $F_{Q,v_b}$ of $Q$ is the following glueing of complete subquivers of $Q$.
$$
\xymatrixcolsep{0.4pc}\xymatrix{& w_2\ar@{.}[rr]\ar@<0.3ex>[dl]\ar@<0.3ex>[ddr]\ar@<0.3ex>[drrr] & & w_{\ell -1}\ar@<0.3ex>[dr]\ar@<0.3ex>[ddl]\ar@<0.3ex>[dlll] & & & & & \\
w_1\ar@<0.3ex>[ur]\ar@<0.3ex>[rrrr]\ar@<0.3ex>[urrr]\ar@<0.3ex>[drr] & & & & w_{\ell}\ar@<0.3ex>[llll]\ar@<0.3ex>[dll]\ar@<0.3ex>[ulll]\ar@<0.3ex>[ul] & & & & \\
& & u_a\ar@<0.3ex>[ull]\ar@<0.3ex>[uul]\ar@<0.3ex>[uur]\ar@<0.3ex>[urr]\ar@<0.3ex>[dll]\ar@<0.3ex>[ddl]\ar@<0.3ex>[ddr]\ar@<0.3ex>[drr] & & & v_1\ar@<0.3ex>[dl]\ar@<0.3ex>[rr]\ar@<0.3ex>[drrr]\ar@<0.3ex>[ddr] & & v_2\ar@{.}[dr]\ar@<0.3ex>[ddl]\ar@<0.3ex>[ll]\ar@<0.3ex>[dlll] & \\
u_{a-1}\ar@<0.3ex>[urr]\ar@<0.3ex>[rrrr]\ar@<0.3ex>[drrr]\ar@{.}[dr] & & & & k\ar@<0.3ex>[llll]\ar@<0.3ex>[ull]\ar@<0.3ex>[dlll]\ar@<0.3ex>[dl]\ar@<0.3ex>[ur]\ar@<0.3ex>[urrr]\ar@<0.3ex>[drr]\ar@<0.3ex>[rrrr] & & & & v_{b-1}\ar@<0.3ex>[llll]\ar@<0.3ex>[ulll]\ar@<0.3ex>[dll] \\
& u_2\ar@<0.3ex>[uur]\ar@<0.3ex>[urrr]\ar@<0.3ex>[rr] & & u_1\ar@<0.3ex>[ll]\ar@<0.3ex>[ulll]\ar@<0.3ex>[uul]\ar@<0.3ex>[ur] & & & v_b\ar@<0.3ex>[ull]\ar@<0.3ex>[uul]\ar@<0.3ex>[urr]\ar@<0.3ex>[uur]\ar@<0.3ex>[dll]\ar@<0.3ex>[drr]\ar@<0.3ex>[ddl]\ar@<0.3ex>[ddr] & & \\
& & & & z_h\ar@<0.3ex>[urr]\ar@<0.3ex>[dr]\ar@<0.3ex>[rrrr]\ar@<0.3ex>[drrr] & & & & z_1\ar@<0.3ex>[ull]\ar@<0.3ex>[llll]\ar@<0.3ex>[dl]\ar@<0.3ex>[dlll] \\
& & & & & z_{h-1}\ar@{.}[rr]\ar@<0.3ex>[ul]\ar@<0.3ex>[uur]\ar@<0.3ex>[urrr] & & z_2\ar@<0.3ex>[ulll]\ar@<0.3ex>[ur]\ar@<0.3ex>[uul] & } 
$$
Notice that the relations in $B_Q$ involving $s_{u_a}$ are:
\begin{enumerate}
\item $s_{u_a}s_{p}=s_ps_{u_a}$ for all $p\notin \mathcal{N}_{Q,u_a}$;
\item $s_{u_a} s_{p}s_{u_a}=s_p s_{u_a} s_p$ for all $p\in\mathcal{N}_{Q,u_a}$;
\item $s_{u_a}s_{u_i}s_{u_j}s_{u_a}=s_{u_i}s_{u_j}s_{u_a}s_{u_i}=s_{u_j}s_{u_a}s_{u_i}s_{u_j}$ for all \\$i,j\in\{0,\ldots, a-1\}$, with $i<j$;
\item $s_{u_a}s_{w_i}s_{w_j}s_{u_a}=s_{w_i}s_{w_j}s_{u_a}s_{w_i}=s_{w_j}s_{u_a}s_{w_i}s_{w_j}$ for all \\$i,j\in\{1,\ldots, \ell\}$, with $i<j$.
\end{enumerate}
The relations in $B_Q$ involving $s_{v_b}$ are analogous. 

Now, Lemma \ref{lem_local_mutations} thus tells us the result of the mutation of $Q$ at $k$. 

We need to check that the relations defining $B_Q$ are preserved by $\varphi_k$. We split the cases as in Lemma \ref{lem_local_mutations}. For convenience, let $c$ (resp. $d$) be the colour of the arrow $k\to u_a$ (resp. of the arrow $k\to v_b$). Let $\tilde{s}_p:=\varphi_k (s_p)$ for all $p\in Q_0$.
\begin{itemize}
\item[a)] If $c\neq 0\neq d$, then $\tilde{s}_p=s_p$ for all $p\in Q_0$, and the result follows trivially.
\item[b)] If $c=0$ and $d\neq 0$ then, if we apply $\mu_k$, the shape of the subquiver of $Q$ previously drawn changes as follows.

$$
\xymatrixcolsep{0.4pc}\xymatrix{& & & & w_2\ar@<0.3ex>[dl]\ar@{.}[rr]\ar@<0.3ex>[drrr]\ar@<0.3ex>[ddr] & & w_{\ell -1}\ar@<0.3ex>[dlll]\ar@<0.3ex>[ddl]\ar@<0.3ex>[dr] & & \\
& & & w_1\ar@<0.3ex>[ur]\ar@<0.3ex>[drr]\ar@<0.3ex>[urrr]\ar@<0.3ex>[rrrr] & & & & w_{\ell}\ar@<0.3ex>[llll]\ar@<0.3ex>[dll]\ar@<0.3ex>[ulll]\ar@<0.3ex>[ul] & \\
& & u_{a-1}\ar@<0.3ex>[dll]\ar@<0.3ex>[ddl]\ar@<0.3ex>[ddr]\ar@<0.3ex>[drr] & & & u_a\ar@<0.3ex>[ull]\ar@<0.3ex>[uul]\ar@<0.3ex>[uur]\ar@<0.3ex>[urr]\ar@<0.3ex>[dl]\ar@<0.3ex>[rr]\ar@<0.3ex>[drrr]\ar@<0.3ex>[ddr] & & v_1\ar@{.}[dr]\ar@<0.3ex>[ddl]\ar@<0.3ex>[ll]\ar@<0.3ex>[dlll] & \\
u_{a-2}\ar@<0.3ex>[urr]\ar@<0.3ex>[rrrr]\ar@<0.3ex>[drrr]\ar@{.}[dr] & & & & k\ar@<0.3ex>[llll]\ar@<0.3ex>[ull]\ar@<0.3ex>[dlll]\ar@<0.3ex>[dl]\ar@<0.3ex>[ur]\ar@<0.3ex>[urrr]\ar@<0.3ex>[drr]\ar@<0.3ex>[rrrr] & & & & v_{b-1}\ar@<0.3ex>[llll]\ar@<0.3ex>[ulll]\ar@<0.3ex>[dll] \\
& u_2\ar@<0.3ex>[uur]\ar@<0.3ex>[urrr]\ar@<0.3ex>[rr] & & u_1\ar@<0.3ex>[ll]\ar@<0.3ex>[ulll]\ar@<0.3ex>[uul]\ar@<0.3ex>[ur] & & & v_b\ar@<0.3ex>[ull]\ar@<0.3ex>[uul]\ar@<0.3ex>[urr]\ar@<0.3ex>[uur]\ar@<0.3ex>[dll]\ar@<0.3ex>[drr]\ar@<0.3ex>[ddl]\ar@<0.3ex>[ddr] & & \\
& & & & z_h\ar@<0.3ex>[urr]\ar@<0.3ex>[dr]\ar@<0.3ex>[rrrr]\ar@<0.3ex>[drrr] & & & & z_1\ar@<0.3ex>[ull]\ar@<0.3ex>[llll]\ar@<0.3ex>[dl]\ar@<0.3ex>[dlll] \\
& & & & & z_{h-1}\ar@{.}[rr]\ar@<0.3ex>[ul]\ar@<0.3ex>[uur]\ar@<0.3ex>[urrr] & & z_2\ar@<0.3ex>[ulll]\ar@<0.3ex>[ur]\ar@<0.3ex>[uul] & } 
$$
By definition of $\varphi_k$, we have $\tilde{s}_{u_a}=\varphi_k (s_{u_a})=t_k t_{u_a} t_k^{-1}$ and $\tilde{s}_p=\varphi_k (s_{p})=t_p$ for all $p\neq u_a$. 

It is enough to check that the relations involving $s_{u_a}$ are preserved. 
\begin{itemize}
\item[(1)] Suppose that $p\notin \mathcal{N}_{Q,u_a}$. Then:
\begin{itemize}
\item[$\bullet$] if $p\notin\mathcal{N}_{Q,k}$ then, using $t_k t_p=t_p t_k$ and $t_{u_a}t_p=t_p t_{u_a}$ we get
\begin{align*}
\tilde{s}_{u_a}\tilde{s}_p=t_k t_{u_a} t_k^{-1} t_p=t_p t_k t_{u_a} t_k^{-1}=\tilde{s}_p \tilde{s}_{u_a}.
\end{align*} 
\item[$\bullet$] if $p\in \mathcal{N}_{Q,k}$, then $p=v_i$ for some $i\in\{1,\ldots, b\}$.  Thus, using $t_k t_{u_a} t_{v_{i}} t_k=t_{v_{i}} t_k t_{u_a}t_{v_{i}}$ and $t_k t_{v_{i}}t_k=t_{v_{i}}t_k t_{v_{i}}$, we get
\begin{align*}
\tilde{s}_{u_a}\tilde{s}_{v_{i}} = & t_k t_{u_a} t_k^{-1}t_{v_{i}} = t_{v_{i}} t_k t_{u_a} t_{v_{i}} t_k^{-1}t_{v_{i}}^{-1}t_k^{-1}t_{v_{i}} \\
= & t_{v_{i}} t_k t_{u_a} t_{v_{i}} t_{v_{i}}^{-1} t_k^{-1} t_{v_{i}}^{-1}t_{v_{i}}= t_{v_{i}}t_k t_{u_a} t_k^{-1}=\tilde{s}_{v_{i}} \tilde{s}_{u_a}.
\end{align*}
\end{itemize}
\item[(2)] Suppose $p\in \mathcal{N}_{Q,u_a}$. Then:
\begin{itemize}
\item[$\bullet$] if $p\in \mathcal{N}_{Q,k}$, then $p=u_i$ for some $i\in\{0,\ldots, a-1\}$. Then, using the relations $t_{k}t_{u_a}t_{k}=t_{u_a}t_k t_{u_a}$, $t_k t_{u_i}t_k=t_{u_i}t_k t_{u_i}$ and $t_{u_a} t_{u_{i}}=t_{u_{i}}t_{u_a}$ we get
\begin{align*}
\tilde{s}_{u_a}\tilde{s}_{u_{i}}\tilde{s}_{u_a}= & t_k t_{u_a} t_k^{-1} t_{u_{i}} t_k t_{u_a} t_k^{-1}= t_{u_a}^{-1} t_k t_{u_a} t_{u_{i}}t_{u_a}^{-1} t_k t_{u_a} \\
= & t_{u_a}^{-1} t_k t_{u_a} t_{u_a}^{-1} t_{u_{i}} t_k t_{u_a} 
=  t_{u_a}^{-1} t_k t_{u_{i}} t_k t_{u_a} \\
= & t_{u_a}^{-1} t_{u_{i}} t_k t_{u_{i}} t_{u_a}= t_{u_{i}} t_{u_a}^{-1} t_k t_{u_a} t_{u_{i}} \\
= & t_{u_{i}} t_k t_{u_a} t_{k}^{-1} t_{u_{i}} = \tilde{s}_{u_{i}}\tilde{s}_{u_a} \tilde{s}_{u_{i}}.
\end{align*}
\item[$\bullet$] if $p\notin \mathcal{N}_{Q,k}$, then $p=w_i$ for some $i\in\{1,\ldots, \ell\}$. Then, using $t_k t_{w_i}=t_{w_i} t_k$ and $t_{u_a} t_{w_i} t_{u_a}= t_{w_i} t_{u_a} t_{w_i}$ we get
\begin{align*}
\tilde{s}_{u_a} \tilde{s}_{w_i} \tilde{s}_{u_a}= & t_k t_{u_a} t_k^{-1} t_{w_i} t_k t_{u_a} t_k^{-1} = t_k t_{u_a} t_k^{-1} t_k t_{w_i} t_{u_a} t_k^{-1} \\
= & t_k t_{u_a} t_{w_i} t_{u_a} t_k^{-1} = t_k t_{w_i} t_{u_a} t_{w_i} t_{k}^{-1}= t_{w_i} t_k t_{u_a} t_k^{-1}t_{w_i} \\
= & \tilde{s}_{w_i} \tilde{s}_{u_a}\tilde{s}_{w_i}.
\end{align*}
\end{itemize}
\item[(3)] If $i,j\in\{0,\ldots, a-1\}$ and $i<j$ then, using $t_k t_{u_a}t_k=t_{u_a}t_k t_{u_a}$, $t_{u_a}t_{u_i}=t_{u_i}t_{u_a}$, $t_{u_a}t_{u_j}=t_{u_j}t_{u_a}$ and $t_k t_{u_i}t_{u_j}t_k=t_{u_i}t_{u_j}t_k t_{u_i}$ we have
\begin{align*}
\tilde{s}_{u_a} \tilde{s}_{u_i}\tilde{s}_{u_j}\tilde{s}_{u_a}= & t_k t_{u_a}t_k^{-1} t_{u_i}t_{u_j}t_k t_{u_a}t_k^{-1}=t_{u_a}^{-1}t_k t_{u_a}t_{u_i}t_{u_j}t_{u_a}^{-1}t_k t_{u_a} \\
= & t_{u_a}^{-1} t_k t_{u_i}t_{u_j}t_{u_a}t_{u_a}^{-1}t_k t_{u_a}= t_{u_a}^{-1} t_k t_{u_i}t_{u_j}t_k t_{u_a} \\
= & t_{u_a}^{-1} t_{u_i}t_{u_j}t_k t_{u_i} t_{u_a} = t_{u_i}t_{u_j}t_{u_a}^{-1}t_{k}t_{u_a}t_{u_i} \\
= & t_{u_i}t_{u_j} t_k t_{u_a}t_k^{-1} t_{u_i}=\tilde{s}_{u_i}\tilde{s}_{u_j}\tilde{s}_{u_a}\tilde{s}_{u_i}.
\end{align*}
Using the relation $t_k t_{u_i}t_{u_j}t_k=t_{u_j}t_k t_{u_i}t_{u_j}$ instead of \\$t_k t_{u_i}t_{u_j}t_k=t_{u_i}t_{u_j}t_k t_{u_i}$, one can also show that
$$
\tilde{s}_{u_a} \tilde{s}_{u_i}\tilde{s}_{u_j}\tilde{s}_{u_a}=\tilde{s}_{u_j} \tilde{s}_{u_a}\tilde{s}_{u_i}\tilde{s}_{u_j}.
$$
\item[(4)] If $i,j\in\{1,\ldots, \ell\}$ and $i<j$ then, using $t_k t_{w_i}=t_{w_i}t_k$, \\ $t_k t_{w_j}=t_{w_j}t_k$ and $t_{u_a} t_{w_i}t_{w_j}t_{u_a}=t_{w_i}t_{w_j}t_{u_a}t_{w_i}$ we get
\begin{align*}
\tilde{s}_{u_a}\tilde{s}_{w_i}\tilde{s}_{w_j}\tilde{s}_{u_a}= & t_k t_{u_a}t_k^{-1}t_{w_i}t_{w_j}t_k t_{u_a}t_k^{-1}= t_k t_{u_a}t_{w_i}t_{w_j}t_{k}^{-1}t_k t_{u_a}t_{k}^{-1} \\
= & t_k t_{u_a}t_{w_i}t_{w_j} t_{u_a}t_{k}^{-1}= t_k t_{w_i}t_{w_j}t_{u_a}t_{w_i}t_k^{-1} \\
= & t_{w_i}t_{w_j}t_k t_{u_a}t_k^{-1}t_{w_i}= \tilde{s}_{w_i}\tilde{s}_{w_j}\tilde{s}_{u_a}\tilde{s}_{w_i}.
\end{align*}
Using the relation $t_{u_a} t_{w_i}t_{w_j}t_{u_a}=t_{w_j}t_{u_a}t_{w_i}t_{w_j}$ instead of \\
$t_{u_a} t_{w_i}t_{w_j}t_{u_a}=t_{w_i}t_{w_j}t_{u_a}t_{w_i}$, one can also get
$$
\tilde{s}_{u_a}\tilde{s}_{w_i}\tilde{s}_{w_j}\tilde{s}_{u_a}= \tilde{s}_{w_j}\tilde{s}_{u_a}\tilde{s}_{w_i}\tilde{s}_{w_j}.
$$
\end{itemize}
\item[c)] The case $c\neq 0$, $d=0$ is analogous to b).
\item[d)] If $c=d=0$, then $\tilde{s}_{u_a}=t_k t_{u_a}t_k^{-1}$, $\tilde{s}_{v_b}=t_k t_{v_b}t_k^{-1}$ and $\tilde{s}_p=t_p$ for all $p\neq u_a, v_b$. Again, we just need to check that the relations involving $s_{u_a}$ and $s_{v_b}$ are preserved. We will just check that the relations involving $s_{u_a}$ are preserved, since one can apply use the same methods for $s_{v_b}$.
\begin{itemize}
\item[(1)] Suppose $p\notin \mathcal{N}_{Q,u_a}$. Then:
\begin{itemize}
\item[$\bullet$] if $p\neq v_b$, then one can show that
$$
\tilde{s}_{u_a}\tilde{s}_p=\tilde{s}_p\tilde{s}_{u_a}
$$
exactly as in b(1) and b(2).
\item[$\bullet$] if $p=v_b$ then, using the relation $t_{u_a}t_{v_b}=t_{v_b} t_{u_a}$ we get
\begin{align*}
\tilde{s}_{u_a}\tilde{s}_{v_b}= & t_k t_{u_a} t_k^{-1} t_k t_{v_b} t_k^{-1} =t_k t_{u_a} t_{v_b} t_{k}^{-1} \\
= & t_k t_{v_b} t_{u_a} t_k^{-1}= t_k t_{v_b} t_k^{-1} t_k t_{u_a} t_k^{-1}=\tilde{s}_{v_b}\tilde{s}_{u_a}. 
\end{align*}
\end{itemize}
\item[(2)] If $p\in\mathcal{N}_{Q,u_a}$, $p\neq u_a$, then one can show that
$$
\tilde{s}_{u_a}\tilde{s}_p\tilde{s}_{u_a}=\tilde{s}_p\tilde{s}_{u_a}\tilde{s}_p
$$
exactly as in b(2).
\item[(3)] If $i,j\in\{0,\ldots, a-1\}$ with $i<j$, then one can show that
$$
\tilde{s}_{u_a}\tilde{s}_{u_i}\tilde{s}_{u_j}\tilde{s}_{u_a}=\tilde{s}_{u_i}\tilde{s}_{u_j}\tilde{s}_{u_a}\tilde{s}_{u_i}=\tilde{s}_{u_j}\tilde{s}_{u_a}\tilde{s}_{u_i}\tilde{s}_{u_j}
$$
exactly as in b(3).
\item[(4)] If $i,j\in\{1,\ldots, \ell\}$ with $i<j$, then one can show that
$$
\tilde{s}_{u_a}\tilde{s}_{w_i}\tilde{s}_{w_j}\tilde{s}_{u_a}=\tilde{s}_{w_i}\tilde{s}_{w_j}\tilde{s}_{u_a}\tilde{s}_{w_i}=\tilde{s}_{w_j}\tilde{s}_{u_a}\tilde{s}_{w_i}\tilde{s}_{w_j}
$$
exactly as in b(4).
\end{itemize}
\end{itemize}
\end{proof}

\begin{prop}\label{prop_conjug_neighbours}
Let $Q$ be an $m$-coloured quiver of mutation type $\overrightarrow{A_{n-1}}$, $k\in Q_0$, and $\xymatrix@1{k\ar[r] ^{(c)} & \ell}$ an arrow of colour $c$ in $Q$. Let $\{t_i^{(j)}|i\in Q_0\}$ be the generators of $B_{\mu_k^j(Q)}$ for $j=0,\ldots, m+1$. Then 
$$
\varphi_k^{j}(t_{\ell}^{(0)})=\begin{cases} t_{\ell}^{(j)}, & \text{ for } j=0,\ldots, c \\
t_k^{(j)}t_{\ell}^{(j)}(t_k^{(j)})^{-1}, & \text{ for } j=c+1,\ldots ,m+1
\end{cases}
$$ 
where we set $t_w^{(m+1)}=t_w^{(0)}$ for all $w\in Q_0$.
\end{prop}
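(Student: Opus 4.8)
The plan is to interpret $\varphi_k^j$ as a composite of $j$ homomorphisms of the type just defined, one for each intermediate mutation at $k$. For $0\le i\le m$ write $\varphi_k^{[i]}\colon B_{\mu_k^i(Q)}\to B_{\mu_k^{i+1}(Q)}$ for the homomorphism of the preceding definition attached to the quiver $\mu_k^i(Q)$, so that $\varphi_k^j=\varphi_k^{[j-1]}\circ\cdots\circ\varphi_k^{[0]}$. By definition $\varphi_k^{[i]}$ sends each generator $t_p^{(i)}$ to $t_p^{(i+1)}$, except when there is an arrow $k\xrightarrow{(0)}p$ in $\mu_k^i(Q)$, in which case it sends it to $t_k^{(i+1)}t_p^{(i+1)}(t_k^{(i+1)})^{-1}$. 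Since an $m$-coloured quiver has no loops, $\varphi_k^{[i]}(t_k^{(i)})=t_k^{(i+1)}$ for every $i$, so everything reduces to tracking, for each $i$, the colour of the single arrow $k\to\ell$ in $\mu_k^i(Q)$.

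The key input is therefore a colour count. By Lemma \ref{lem_local_mutations}, in each of the four cases the neighbours of $k$ are preserved as a set and every arrow out of $k$ has its colour lowered by one (with the convention $-1\equiv m$); in particular $\ell$ remains a neighbour of $k$ after each mutation at $k$, and the colour of the arrow $k\to\ell$ in $\mu_k^i(Q)$ equals $(c-i)\bmod(m+1)$, read in $\{0,\dots,m\}$. Consequently this colour is $0$ precisely when $i\equiv c\pmod{m+1}$, that is, exactly for $i=c$ in the range $0\le i\le m$; this is consistent with $\mu_k$ having order $m+1$ by Corollary \ref{cor_order_col_quiv_mut_type_A}. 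Hence $\varphi_k^{[i]}$ conjugates $t_\ell^{(i)}$ by $t_k^{(i+1)}$ when $i=c$, and sends $t_\ell^{(i)}\mapsto t_\ell^{(i+1)}$ for every other $i$ in this range.

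I would then conclude by induction on $j$. The base case $j=0$ is immediate, since $c\ge 0$ places it in the first branch. For the inductive step I apply $\varphi_k^{[j]}$ to the expression already obtained for $\varphi_k^j(t_\ell^{(0)})$ and distinguish $j<c$, $j=c$ and $j>c$. When $j<c$ the inductive expression is $t_\ell^{(j)}$ and the colour of $k\to\ell$ in $\mu_k^j(Q)$ is nonzero, so it maps to $t_\ell^{(j+1)}$; when $j=c$ the colour is $0$, so $t_\ell^{(c)}$ is conjugated, producing $t_k^{(c+1)}t_\ell^{(c+1)}(t_k^{(c+1)})^{-1}$; when $j>c$ the inductive expression is $t_k^{(j)}t_\ell^{(j)}(t_k^{(j)})^{-1}$, and since $\varphi_k^{[j]}$ fixes the indices of both $t_k^{(j)}$ (no loop) and $t_\ell^{(j)}$ (colour nonzero), the conjugated form is carried to $t_k^{(j+1)}t_\ell^{(j+1)}(t_k^{(j+1)})^{-1}$. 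Each outcome matches the claimed formula, closing the induction; the endpoint $j=m+1$ is handled by the conventions $t_w^{(m+1)}=t_w^{(0)}$ and $\mu_k^{m+1}(Q)=Q$.

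The only delicate point is bookkeeping: one must keep the three layers of indexing rigorously aligned, namely the generator labels $t_p^{(i)}$, the homomorphisms $\varphi_k^{[i]}$, and the colour shift $c\mapsto c-i$, and in particular treat the modular wrap-around correctly in the extreme cases $c=0$ and $c=m$ and at the endpoint $j=m+1$. No genuine estimate or new idea is needed beyond Lemma \ref{lem_local_mutations}; the entire content of the argument sits in the colour count of the second paragraph.
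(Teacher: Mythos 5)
Your proposal is correct and follows essentially the same route as the paper's proof: both arguments track the colour of the arrow $k\to\ell$ decreasing by one (modulo $m+1$) under each mutation at $k$, observe that $\varphi_k$ merely relabels $t_\ell$ while this colour is nonzero, conjugates by $t_k$ exactly at the step where the colour reaches $0$ (i.e.\ at $j=c$), and then carries the conjugated expression forward unchanged since $k$ has no loop and the colour stays nonzero until $j=m+1$. The only cosmetic differences are that you organise this as an explicit induction on $j$ and source the colour decrement from Lemma \ref{lem_local_mutations}, whereas the paper displays the two chains of images directly and cites Proposition \ref{prop_equiv_col_quiv_mut}.
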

\begin{proof}
By the equivalent definition of $m$-coloured quiver mutation given in Proposition \ref{prop_equiv_col_quiv_mut} we know that, for each application of $\mu_k$, the colour of the arrow $k\to \ell$ decreases by one. Hence, the colour of the arrow $k\to \ell$ changes as follows.

$$
\begin{array}{ccccccc}
Q & \to & \mu_k(Q) & \to & \cdots & \to & \mu_k^c(Q) \\
(\xymatrix@1{k\ar[r]^{(c)} & \ell}) & \mapsto & (\xymatrix@1{k\ar[r]^{(c-1)} & \ell})  & \mapsto & \cdots & \mapsto & (\xymatrix@1{k\ar[r]^{(0)} & \ell})
\end{array}
$$
Therefore, if we apply $\varphi_k$ on $t_{\ell}^{(0)}$ multiple times, we get

$$
\begin{array}{ccccccc}
B_Q & \to & B_{\mu_k(Q)} & \to & \cdots & \to & B_{\mu_k^c(Q)} \\ \vspace{0.4cm}
t_{\ell}^{(0)} & \mapsto & t_{\ell}^{(1)} & \mapsto & \cdots & \mapsto & t_{\ell}^{(c)}
\end{array}
$$
Thus $\varphi_k^j (t_{\ell}^{(0)})=t_{\ell}^{(j)}$ for $j=0,\ldots, c$. 

Now, the arrow $k\to \ell$ has colour 0 in $\mu_k^c(Q)$. Hence the colour of the arrow $k\to \ell$ changes as follows.  

$$
\begin{array}{ccccccc}
\mu_k^c(Q) & \to & \mu_k^{c+1}(Q) & \to & \cdots & \to & \mu_k^{m+1}(Q)=Q \\
(\xymatrix@1{k\ar[r]^{(0)} & \ell}) & \mapsto & (\xymatrix@1{k\ar[r]^{(m)} & \ell})  & \mapsto & \cdots & \mapsto & (\xymatrix@1{k\ar[r]^{(c)} & \ell})
\end{array}
$$
Therefore, if we apply $\varphi_k$ on $t_{\ell}^{(c)}$ multiple times, we get

$$
\begin{array}{ccccccc}
B_{\mu_k^c(Q)} & \to & B_{\mu_k^{c+1}(Q)} & \to & \cdots & \to & B_{\mu_k^{m+1}(Q)}=B_Q \\ \vspace{0.4cm}
t_{\ell}^{(c)} & \mapsto & t_{k}^{(c+1)}t_{\ell}^{(c+1)}(t_{k}^{(c+1)})^{-1} & \mapsto & \cdots & \mapsto & t_{k}^{(0)}t_{\ell}^{(0)}(t_k^{(0)})^{-1}
\end{array}
$$

Thus $\varphi_k^{j}(t_{\ell}^{(0)})=t_k^{(j)}t_{\ell}^{(j)}(t_k^{(j)})^{-1}$ for all $j=c+1,\ldots, m+1$.
\end{proof}

\begin{thm}\label{thm_iso_groups}
The group homomorphism $\varphi_k : B_Q\to B_{Q'}$ is an isomorphism. 
\end{thm}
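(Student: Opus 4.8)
The plan is to exploit the fact that $\mu_k$ has order $m+1$ (Corollary \ref{cor_order_col_quiv_mut_type_A}), so that $\mu_k^{m+1}(Q)=Q$ and the homomorphisms $\varphi_k$ assemble into a cycle
$$B_Q \xrightarrow{\varphi_k} B_{\mu_k(Q)} \xrightarrow{\varphi_k} \cdots \xrightarrow{\varphi_k} B_{\mu_k^{m+1}(Q)}=B_Q.$$
I would first identify the full composite $\Phi:=\varphi_k^{m+1}\colon B_Q\to B_Q$ as an inner automorphism of $B_Q$, and then deduce from this that the single map $\varphi_k$ is an isomorphism, by a formal argument.

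First I would compute $\Phi$ on the generators $t_i:=t_i^{(0)}$ of $B_Q$. For a neighbour $\ell$ of $k$, Proposition \ref{prop_conjug_neighbours} evaluated at $j=m+1$ gives directly $\Phi(t_\ell)=t_k t_\ell t_k^{-1}$. For $i=k$ there is no loop at $k$, so each application of $\varphi_k$ merely relabels $t_k^{(j)}\mapsto t_k^{(j+1)}$, whence $\Phi(t_k)=t_k^{(m+1)}=t_k$. For a vertex $i$ not adjacent to $k$, the neighbourhood $\mathcal{N}_{Q,k}$ is preserved under $\mu_k$ by Lemma \ref{lem_local_mutations}, so $i$ stays non-adjacent to $k$ all along the cycle; hence $\varphi_k$ again only relabels $t_i^{(j)}\mapsto t_i^{(j+1)}$ and $\Phi(t_i)=t_i$. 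Since relation (1) yields $t_k t_i=t_i t_k$ for such $i$, and trivially $t_k t_k t_k^{-1}=t_k$, in all three cases $\Phi(t_i)=t_k t_i t_k^{-1}$. Thus $\Phi$ agrees with conjugation by $t_k$ on every generator, so $\Phi$ is the inner automorphism $x\mapsto t_k x t_k^{-1}$ of $B_Q$; in particular $\Phi$ is bijective.

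From this the conclusion follows formally. Writing $\psi_j\colon B_{\mu_k^j(Q)}\to B_{\mu_k^{j+1}(Q)}$ for the successive copies of $\varphi_k$, so that $\varphi_k=\psi_0$, the factorisation $\Phi=\psi_m\circ\cdots\circ\psi_0$ exhibits $\Phi^{-1}\circ\psi_m\circ\cdots\circ\psi_1$ as a left inverse of $\psi_0$, so $\varphi_k$ is injective. Running the very same computation with $Q$ replaced by $Q'=\mu_k(Q)$ shows that $\Phi':=\varphi_k^{m+1}\colon B_{Q'}\to B_{Q'}$ is conjugation by the generator of $B_{Q'}$ at $k$, hence again an automorphism; but here $\Phi'=\psi_0\circ\psi_m\circ\cdots\circ\psi_1$, so the surjectivity of $\Phi'$ forces $\psi_0=\varphi_k$ to be surjective. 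Therefore $\varphi_k$ is a bijective homomorphism, i.e.\ an isomorphism.

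The only genuine content lies in the generator computation of the second paragraph, which rests entirely on Proposition \ref{prop_conjug_neighbours} together with the invariance of $\mathcal{N}_{Q,k}$ under $\mu_k$ (Lemma \ref{lem_local_mutations}); the step from ``$\Phi$ is an automorphism'' to ``$\varphi_k$ is an isomorphism'' is then purely formal, once one also applies the computation to $Q'$. I expect the point needing the most care to be the bookkeeping of which generators get conjugated by $t_k$ and which stay fixed, and in particular the verification that the non-adjacent generators really commute with $t_k$, so that $\Phi$ is honestly conjugation by $t_k$ and not merely some endomorphism.
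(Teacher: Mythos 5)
Your proposal is correct and follows essentially the same route as the paper: both identify the full composite $\varphi_k^{m+1}$ around the mutation cycle (using $\mu_k^{m+1}(Q)=Q$ and Proposition \ref{prop_conjug_neighbours}) as conjugation by $t_k$, deduce injectivity of $\varphi_k$ from this, and then obtain surjectivity by running the same argument starting at $B_{Q'}=B_{\mu_k(Q)}$. Your treatment is in fact slightly more careful than the paper's in justifying why generators at vertices not adjacent to $k$ are fixed along the whole cycle (via the invariance of $\mathcal{N}_{Q,k}$ under $\mu_k$ from Lemma \ref{lem_local_mutations}), a point the paper leaves implicit.
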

\begin{proof}
Let $\{t_p | p\in Q_0\}$ be the set of generators for $B_Q$ given in Definition \ref{defn_group_ass_to_quiver}.
Consider the following composition of group homomorphisms.
$$
\phi: B_Q\xrightarrow{\varphi_k} B_{\mu_k(Q)}\xrightarrow{\varphi_k^m} B_{\mu_k^{m+1}(Q)}=B_Q
$$
Let $\ell\in\mathcal{N}_{Q,k}$. Then, using Proposition \ref{prop_conjug_neighbours}, we can say that $\phi (t_{\ell})=t_k t_{\ell} t_k^{-1}$. Furthermore, if $w\notin\mathcal{N}_{Q,k}$, then $t_w$ commutes with $t_k$. Thus $\phi(t_w)=t_w=t_k t_w t_k^{-1}$. Hence 
$$
\phi (t_p)=t_k t_p t_k^{-1} \hspace{1cm} \text{for all } p\in Q_0.
$$
So $\phi $ is conjugation by $t_k$, that is an isomorphism. Hence $\varphi_k$ is injective. 

We can use the same argument to show that the composition 
$$
\tilde{\phi}:B_{\mu_k}(Q)\xrightarrow{\varphi_k^m} B_{\mu_k^{m+1}(Q)}=B_Q \xrightarrow{\varphi_k} B_{\mu_k (Q)}
$$
is conjugation by $t_k$, and hence it is an isomorphism. This proves that $\varphi_k$ is surjective, and hence it is an isomorphism.
\end{proof}

\begin{cor}
Let $Q$ be an $m$-coloured quiver of mutation type $\overrightarrow{A_{n-1}}$, and $k\in Q_0$. Let $\{s_i |i\in Q_0\}$ (resp. $\{t_i |i\in Q_0\}$) be a generating set for $B_Q$ (resp. $B_{\mu_k}(Q)$). Then $\varphi_k^{-1}: B_{\mu_k (Q)}\to B_Q$ is computed by 
$$
\varphi_k ^{-1}(t_i)=\begin{cases} s_k^{-1} s_i s_k, & \text{ if } \xymatrix@1{k\ar[r]^{(m)} & i} \\
s_i, & \text{ otherwise}.
\end{cases}
$$
\end{cor}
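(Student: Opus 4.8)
The plan is to exploit the fact that $\varphi_k$ is already known to be an isomorphism (Theorem \ref{thm_iso_groups}), so that I need not verify independently that the map on the right-hand side is a well-defined homomorphism. Instead it suffices, for each generator $t_i$ of $B_{\mu_k(Q)}$, to exhibit an element of $B_Q$ whose image under $\varphi_k$ is $t_i$, and then to read off that this element agrees with the claimed formula.

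Two elementary observations drive the computation. First, since $m$-coloured quivers have no loops, the vertex $k$ is not a neighbour of itself, so the definition of $\varphi_k$ gives immediately $\varphi_k(s_k) = t_k$. Second, I record how the colour of an arrow out of $k$ changes under $\mu_k$: by Step 1 of Proposition \ref{prop_equiv_col_quiv_mut}, mutation at $k$ subtracts $1$ (modulo $m+1$) from the colour of every arrow leaving $k$, while Steps 2 and 3 never touch arrows incident to $k$. Hence $\mathcal{N}_{Q,k} = \mathcal{N}_{\mu_k(Q),k}$ and, for $i \neq k$, there is an arrow $k \xrightarrow{(0)} i$ in $Q$ if and only if there is an arrow $k \xrightarrow{(m)} i$ in $\mu_k(Q)$. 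This is exactly the condition distinguishing the two cases in the statement.

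With these in hand the verification splits along the displayed cases. If $k \xrightarrow{(m)} i$ in $\mu_k(Q)$, then $k \xrightarrow{(0)} i$ in $Q$, so by definition $\varphi_k(s_i) = t_k t_i t_k^{-1} = \varphi_k(s_k)\, t_i\, \varphi_k(s_k)^{-1}$; conjugating to isolate $t_i$ gives $t_i = \varphi_k(s_k)^{-1}\varphi_k(s_i)\varphi_k(s_k) = \varphi_k(s_k^{-1} s_i s_k)$, whence $\varphi_k^{-1}(t_i) = s_k^{-1} s_i s_k$. In the remaining (\emph{otherwise}) case --- which includes $i = k$ and every $i$ for which the arrow out of $k$ does not have colour $0$ in $Q$ --- the definition of $\varphi_k$ reads $\varphi_k(s_i) = t_i$, so $\varphi_k^{-1}(t_i) = s_i$. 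The only delicate point is the colour bookkeeping of the second paragraph, in particular the wraparound $0 - 1 \equiv m \pmod{m+1}$ together with the fact that the neighbourhood of $k$ is genuinely preserved by mutation, so that the two cases really do partition the generators; once this is settled, the remainder is the short conjugation identity above.
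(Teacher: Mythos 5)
Your proof is correct, and it is exactly the routine verification that the paper leaves implicit (the corollary is stated there without proof): since $\varphi_k(s_k)=t_k$ (no loops) and mutation at $k$ shifts the colour of every arrow out of $k$ by $-1$ modulo $m+1$ while leaving the neighbourhood of $k$ unchanged, the condition $k\xrightarrow{(m)} i$ in $\mu_k(Q)$ is equivalent to $k\xrightarrow{(0)} i$ in $Q$, so your two cases give precisely the preimages of the generators $t_i$ under $\varphi_k$, which is already known to be an isomorphism by Theorem \ref{thm_iso_groups}. Your care with the wraparound $0-1\equiv m \pmod{m+1}$ and with checking that Steps 2 and 3 of Proposition \ref{prop_equiv_col_quiv_mut} never affect arrows incident to $k$ is exactly the bookkeeping needed to make the case split well defined.
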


\begin{cor}
Let $Q$ be an $m$-coloured quiver of mutation type $\overrightarrow{A_{n-1}}$. Then its associated group $B_Q$ is isomorphic to the braid group of type $A_{n-1}$.
\end{cor}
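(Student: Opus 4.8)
The plan is to reduce the statement to the two facts already established in the excerpt: first, that $B_{\overrightarrow{A_{n-1}}}$ is the braid group of type $A_{n-1}$ in its standard presentation (Remark \ref{rem_braid_group}); and second, that a single coloured quiver mutation induces an isomorphism of the associated groups (Theorem \ref{thm_iso_groups}). Once these are in hand, the corollary is essentially a matter of chaining isomorphisms along a mutation sequence.

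First I would unwind the hypothesis using the definition of mutation type $\overrightarrow{A_{n-1}}$: there exist vertices $i_1,\ldots,i_{\ell}\in\{1,\ldots,n-1\}$ such that $Q=\mu_{i_1}\cdots\mu_{i_{\ell}}(\overrightarrow{A_{n-1}})$. Setting $R_0=\overrightarrow{A_{n-1}}$ and $R_j=\mu_{i_{\ell-j+1}}(R_{j-1})$ for $j=1,\ldots,\ell$, I obtain a chain $R_0,R_1,\ldots,R_{\ell}=Q$ in which each $R_j$ is itself of mutation type $\overrightarrow{A_{n-1}}$, immediately from the definition (each $R_j$ is obtained from $\overrightarrow{A_{n-1}}$ by a sequence of mutations). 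This last observation is what licenses the use of Theorem \ref{thm_iso_groups} at every step.

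Next I would apply Theorem \ref{thm_iso_groups} to each link of the chain: for every $j$ the homomorphism $\varphi_{i_{\ell-j+1}}\colon B_{R_{j-1}}\to B_{R_j}$ is an isomorphism. Composing them yields an isomorphism $B_{\overrightarrow{A_{n-1}}}\xrightarrow{\sim}B_Q$. Finally, Remark \ref{rem_braid_group} identifies $B_{\overrightarrow{A_{n-1}}}$ with the braid group of type $A_{n-1}$, so $B_Q$ is isomorphic to the braid group of type $A_{n-1}$, which is the assertion.

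There is no genuine obstacle remaining, since all the substantive content has been absorbed into Theorem \ref{thm_iso_groups}; the only point deserving a word of care is precisely the verification, noted above, that every intermediate quiver $R_j$ lies in $\mathcal{M}_{n-1,m}$, so that the isomorphism theorem is applicable at each stage. If one wished to be slightly more careful one could also record that the composite isomorphism is independent of the chosen mutation sequence up to the ambient conjugation, but this is not needed for the bare isomorphism statement.
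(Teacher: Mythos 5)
Your proposal is correct and is essentially the paper's own argument: the paper's proof simply cites Remark \ref{rem_braid_group} and Theorem \ref{thm_iso_groups}, leaving implicit the chaining of isomorphisms along the mutation sequence that you spell out explicitly (including the observation that each intermediate quiver remains of mutation type $\overrightarrow{A_{n-1}}$, which licenses each application of the theorem). Your write-up is just a more detailed rendering of the same reduction.
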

\begin{proof}
By Remark \ref{rem_braid_group}, we know that $B_{\overrightarrow{A_{n-1}}}$ is the braid group of type $A_{n-1}$. Hence Theorem \ref{thm_iso_groups} implies the statement.
\end{proof}

\begin{ese}\label{ese_relation_following}
Consider the 4-angulation of a regular dodecagon given in Example \ref{ese_12gon_colour_of_arrows}.
\begin{center}
\begin{figure*}
\includegraphics[scale=0.5]{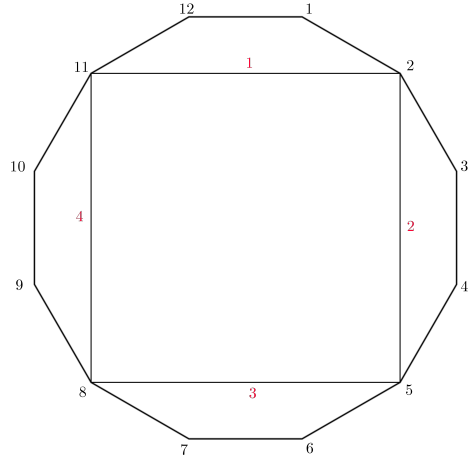}
\caption{4-angulation $\Delta$ of a regular dodecagon}
\label{im_4_gul_dodec}
\end{figure*}
\end{center}
The associated 2-coloured quiver $\Psi(\Delta)$ is the following.
$$
\xymatrix{ & 1\ar@<0.3ex>[dr] \ar@<-3pt>@{}[dr]^{\scalemath{0.5}{(2)}}\ar@<0.3ex>[dl] \ar@<-3pt>@{}[dl]^{\scalemath{0.5}{(0)}}\ar@<0.3ex>[dd] \ar@<-1.7pt>@{}[dd]^<<<<<<<{\scalemath{0.5}{(1)}}|(.5){\hole} & \\
4\ar@<0.3ex>[rr] \ar@<-1.7pt>@{}[rr]^<<<<<<<{\scalemath{0.5}{(1)}}\ar@<0.3ex>[dr] \ar@<-3pt>@{}[dr]^{\scalemath{0.5}{(0)}}\ar@<0.3ex>[ur] \ar@<-3pt>@{}[ur]^{\scalemath{0.5}{(2)}} & & 2\ar@<0.3ex>[ll] \ar@<-1.4pt>@{}[ll]^<<<<<<<{\scalemath{0.5}{(1)}}\ar@<0.3ex>[ul] \ar@<-3pt>@{}[ul]^{\scalemath{0.5}{(0)}}\ar@<0.3ex>[dl] \ar@<-3pt>@{}[dl]^{\scalemath{0.5}{(2)}} \\
& 3\ar@<0.3ex>[uu] \ar@<-1.7pt>@{}[uu]^<<<<<<<{\scalemath{0.5}{(1)}}|(.5){\hole}\ar@<0.3ex>[ur] \ar@<-3pt>@{}[ur]^{\scalemath{0.5}{(2)}}\ar@<0.3ex>[ul] \ar@<-3pt>@{}[ul]^{\scalemath{0.5}{(0)}} &}
$$

Therefore $B_{\Psi(\Delta)}$ is the group generated by $s_1,s_2,s_3,s_4$, subject to relations
\begin{align}
s_1 s_2 s_1 = & s_2 s_1 s_2 \nonumber \\
s_1 s_3 s_1 = & s_3 s_1 s_3 \nonumber \\
s_1 s_4 s_1 = & s_4 s_1 s_4 \label{eq_1} \\
s_2 s_3 s_2 = & s_3 s_2 s_3 \nonumber \\
s_2 s_4 s_2 = & s_4 s_2 s_4 \nonumber \\
s_3 s_4 s_3 = & s_4 s_3 s_4 \nonumber \\
s_1 s_2 s_3 s_1 = & s_2 s_3 s_1 s_2 = s_3 s_1 s_2 s_3 \label{eq_2}
\end{align}
\begin{align}
s_1 s_2 s_4 s_1 = & s_2 s_4 s_1 s_2 = s_4 s_1 s_2 s_4 \label{eq_3}\\
s_1 s_3 s_4 s_1 = & s_3 s_4 s_1 s_3 = s_4 s_1 s_3 s_4 \nonumber \\
s_2 s_3 s_4 s_2 = & s_3 s_4 s_2 s_3 = s_4 s_2 s_3 s_4 \nonumber
\end{align}
Notice that also cycle-type relations involving all the four vertices 1,2,3,4 hold. Specifically 
$$
s_1 s_2 s_3 s_4 s_1 = s_2 s_3 s_4 s_1 s_2 = s_3 s_4 s_1 s_2 s_3 = s_4 s_1 s_2 s_3 s_4.
$$
For example, we can get the first relation using \eqref{eq_1}, \eqref{eq_2} and \eqref{eq_3} as follows:
\begin{align*}
s_1 s_2 s_3 s_4 s_1 \stackrel{\eqref{eq_2}}{=} & s_2 s_3 s_1 s_2 s_1^{-1}s_4 s_1 \stackrel{\eqref{eq_1}}{=} s_2 s_3 s_1 s_2 s_4 s_1 s_4^{-1} \\
\stackrel{\eqref{eq_3}}{=} & s_2 s_3 s_4 s_1 s_2 s_4 s_4^{-1}= s_2 s_3 s_4 s_1 s_2.
\end{align*}
\end{ese}

The phenomenon described in Example \ref{ese_relation_following} can be generalised to subquivers of $Q$ that are complete graphs on an arbitrary number of vertices. This result can be found in Remark 3.2 of \cite{S}, but we include it together with its proof for completeness.

\begin{rem}\label{rem_less_relations}
Let $j\geq 3$, and $i_1,\ldots, i_j\in\{1,\ldots, n-1\}$, with \\
$i_1<i_2<\ldots < i_{j}$. Let $Q$ be an $m$-coloured quiver of mutation type $\overrightarrow{A_{n-1}}$, and $\{s_i| i \in Q_0\}$ the set of generators of $B_Q$ introduced in Definition \ref{defn_group_ass_to_quiver}. 

Suppose that the complete quiver on $i_1,\ldots, i_j$ is a subquiver of $Q$. 

Let $\ell_{p,q}$ be the colour of the arrow $i_p\to i_{q}$, for all $p,q\in\{1,\ldots, j\}$, $p\neq q$. Then, if 
$$
\ell_{a,b} +\ell_{b,c}+\ell_{c,a}=2m+1
$$ 
for all $a,b,c\in \{1,\ldots, j\}$ with $a<b<c$, the following relation holds in $B_Q$.
$$
s_{i_1}s_{i_2}\ldots s_{i_j}s_{i_1}=s_{i_2} s_{i_3}\ldots s_{i_j}s_{i_1}s_{i_2}. 
$$
\end{rem} 
\begin{proof}
We prove the result by induction on $j\geq 3$.
\begin{itemize}
\item If $j=3$, then the result follows directly from the definition of $B_Q$.
\item If $j>3$, then by induction hypothesis we know that 
$$
s_{i_1}s_{i_2}\ldots s_{i_{j-1}}s_{i_1}=s_{i_2} s_{i_3}\ldots s_{i_{j-1}}s_{i_1}s_{i_2}. 
$$
Using this relation together with $s_{i_1}s_{i_j}s_{i_1}=s_{i_j}s_{i_1}s_{i_j}$ and \\ $s_{i_1} s_{i_2} s_{i_j} s_{i_1}=s_{i_j} s_{i_1} s_{i_2} s_{i_j}$, we get
\begin{align*}
s_{i_1} \ldots s_{i_{j-1}} s_{i_j} s_{i_1} = & s_{i_2} \ldots s_{i_{j-1}} s_{i_1}s_{i_2}s_{i_1}^{-1}s_{i_j}s_{i_1}= s_{i_2}\ldots s_{i_{j-1}}s_{i_1}s_{i_2}s_{i_j}s_{i_1}s_{i_j}^{-1} \\
= & s_{i_2}\ldots s_{i_{j-1}}s_{i_j}s_{i_1}s_{i_2}s_{i_j}s_{i_j}^{-1}=s_{i_2}\ldots s_{i_{j-1}}s_{i_j}s_{i_1}s_{i_2}.
\end{align*}
\end{itemize}
\end{proof}

\begin{rem}
In Section 4.3 of \cite{G} the author defines a group $G_s^d$ that acts on an higher zigzag algebra of type $A$, for fixed $s,d\geq 1$. 

The construction of the group $G_s^d$ for $s=2$ is the following. Consider the quiver $Q_2^d$ given by
$$
\xymatrix{1\ar[r] & 2 \ar[d] \\
d+1\ar[u] & \reflectbox{$\ddots$} \ar[l]}.
$$
Then the group $G_2^d$ is generated by $s_1,\ldots, s_{d+1}$ subject to relations 
\begin{equation}\label{relation_high_zigzag}
s_{i_1}s_{i_2}\ldots s_{i_j}s_{i_1}=s_{i_2}\ldots s_{i_j} s_{i_1}s_{i_2}=\ldots = s_{i_j}s_{i_1}\ldots s_{i_{j-1}}s_{i_j}
\end{equation}
for all $1\leq i_1<i_2< \ldots < i_j\leq d+1$ and $j\geq 2$.

Quivers of shape $Q_2^d$ arise in my work. Indeed, if we take $m=d-1$ and $n=d+2$, then the $m$-coloured quiver associated via the map $\Psi$ to the $(m+2)$-angulation 
$$
\Delta^{(d)}=\{(1,d+1), (d+1,2d+1), (2d+1,3d+1),\ldots, (d^2+1,1)\}
$$
of a regular $(mn+2)$-gon is a complete quiver on $n-1=d+1$ vertices, and contains $Q_2^d$ as a subquiver.

For example, for $d=4$, the 5-angulation $\Delta^{(4)}$ of the 20-gon described above is given by the following
\begin{center}
\begin{figure}[h!]
\includegraphics[scale=0.5]{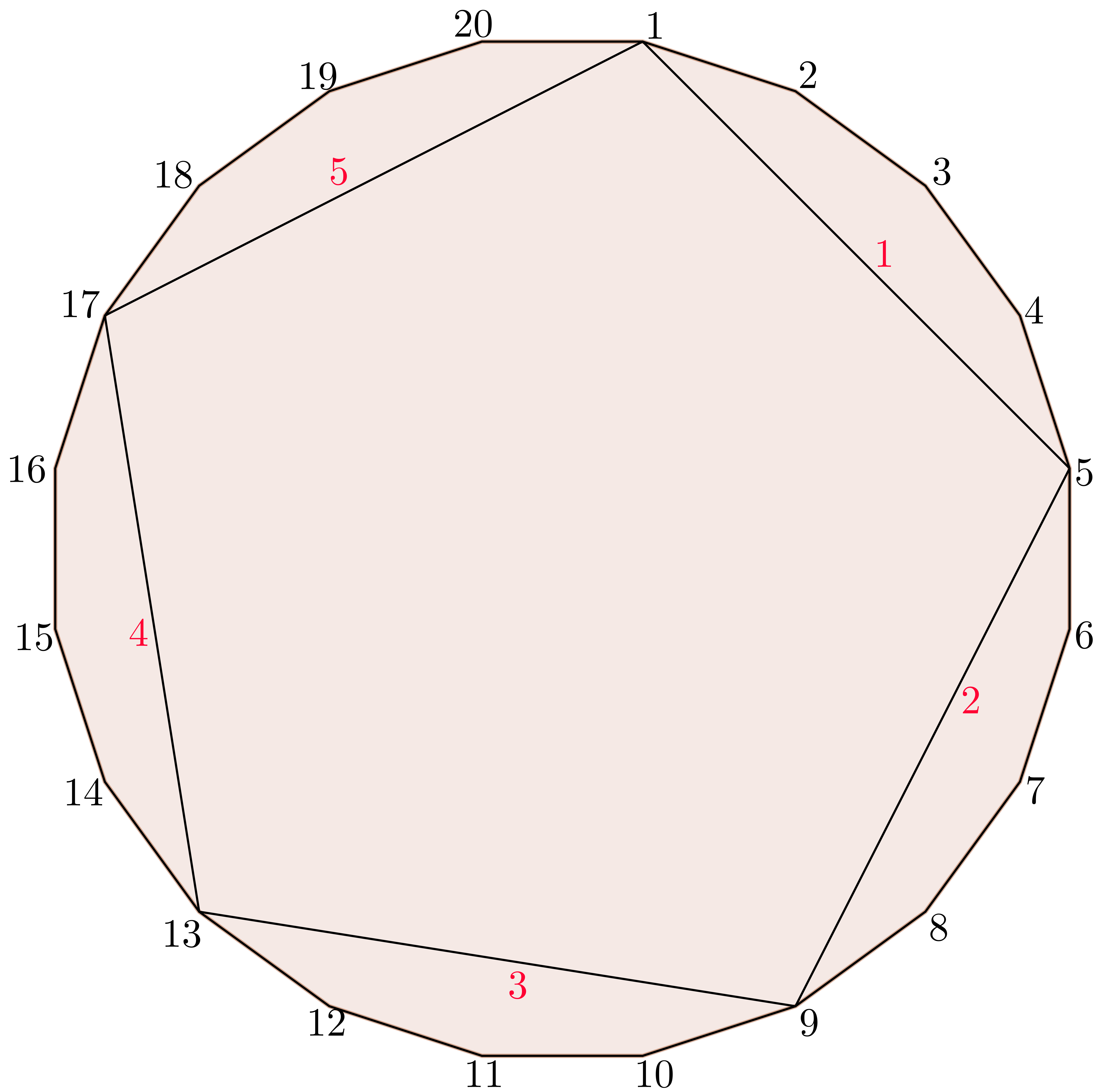}
\end{figure}
\end{center}
and the quiver associated via $\Psi$ to $\Delta^{(4)}$, this contains $Q_2^4$ as subquiver.
$$
\xymatrixcolsep{0.4pc}\xymatrixrowsep{0.7pc}\xymatrix{ & 5\ar[rr] & & 1\ar[dr] & \\
4\ar[ur] & & & & 2\ar[dll]\\
& & 3\ar[ull] & & }
$$

Therefore, Remark \ref{rem_less_relations} implies that not all the relations from \eqref{relation_high_zigzag} are necessary. The only relations one actually needs to define $G_2^d$ are
\begin{alignat*}{2}
s_{i_1} s_{i_2} s_{i_1} = &  s_{i_2} s_{i_1} s_{i_2}, & & \text{for }1\leq i_1<i_2 \leq d+1 \\
s_{i_1}s_{i_2}s_{i_3}s_{i_1}= & s_{i_2}s_{i_3}s_{i_1}s_{i_2}=s_{i_3}s_{i_1}s_{i_2}s_{i_3}, \hspace{0.8cm} & & \text{for } 1\leq i_1 <i_2 <i_3 \leq d+1.
\end{alignat*}
\end{rem}

\end{document}